%% This is file `elsarticle-template-1-num.tex',
%%
%% Copyright 2009 Elsevier Ltd
%%
%% This file is part of the 'Elsarticle Bundle'.
%% ---------------------------------------------
%%
%% It may be distributed under the conditions of the LaTeX Project Public
%% License, either version 1.2 of this license or (at your option) any
%% later version.  The latest version of this license is in
%%    http://www.latex-project.org/lppl.txt
%% and version 1.2 or later is part of all distributions of LaTeX
%% version 1999/12/01 or later.
%%
%% The list of all files belonging to the 'Elsarticle Bundle' is
%% given in the file `manifest.txt'.
%%
%% Template article for Elsevier's document class `elsarticle'
%% with numbered style bibliographic references
%%
%% $Id: elsarticle-template-1-num.tex 149 2009-10-08 05:01:15Z rishi $
%% $URL: http://lenova.river-valley.com/svn/elsbst/trunk/elsarticle-template-1-num.tex $
%%
%%\documentclass[preprint,11pt]{elsarticle}

%% Use the option review to obtain double line spacing
%% \documentclass[preprint,review,12pt]{elsarticle}

%% Use the options 1p,twocolumn; 3p; 3p,twocolumn; 5p; or 5p,twocolumn
%% for a journal layout:
\documentclass[final,1p]{elsarticle}

\usepackage{appendix}
\usepackage[colorlinks,linkcolor=blue, anchorcolor=blue, citecolor=blue]{hyperref}
\usepackage{amssymb}
%% The amsmath package provides various useful equation environments.
\usepackage{amsmath}
\usepackage{amsthm}
\usepackage{color,tabularx,booktabs}
\usepackage{graphicx,subfigure} %use graph format
\biboptions{numbers,sort&compress}

\newcommand{\f}{\frac}
\newcommand{\p}{\partial}

\newcommand{\me}{\mathcal{ E }}

\newcommand{\mg}{\mathcal{G}}
\newcommand{\mo}{\mathcal{O}}
\newcommand{\md}{\mathcal{D}}

% THEOREMS ---------------------------------------------------------------
%\theoremstyle{plain}
\newtheorem{theorem}{Theorem}[section]
\newtheorem{lemma}[theorem]{Lemma}

\theoremstyle{definition}

\theoremstyle{remark}
\newtheorem{remark}[theorem]{Remark}

%---------------------------------------------------------------
\numberwithin{equation}{section}
\allowdisplaybreaks
%---------------------------------------------------------------

\journal{Journal of Scientific Computing}
%---------------------------------------------------------------
\begin{document}

\begin{frontmatter}

%% Title, authors and addresses
\title{Energy Dissipation Law and Maximum Bound Principle-Preserving Linear  BDF2 Schemes with Variable Steps for the Allen--Cahn Equation }
\author[ouc]{Bingyin Zhang} \ead{zhangbingyin@stu.ouc.edu.cn}
\author[ouc,lab]{Hongfei Fu\corref{cor1}} \ead{fhf@ouc.edu.cn}
\author[ouc,lab]{Rihui Lan} \ead{lanrihui@ouc.edu.cn}
\author[ouc,lab]{Shusen Xie} \ead{shusenxie@ouc.edu.cn}

\cortext[cor1]{Corresponding author.}

%% Author affiliation
\affiliation[ouc]{organization={School of Mathematical Sciences},%Department and Organization
	addressline={Ocean University of China}, 
	city={Qingdao},
	postcode={266100}, 
	state={Shandong},
	country={China}}
\affiliation[lab]{organization={Laboratory of Marine Mathematics},%Department and Organization
	addressline={Ocean University of China}, 
	city={Qingdao},
	postcode={266100}, 
	state={Shandong},
	country={China}}

\begin{abstract}
In this paper, we propose and analyze a linear, structure-preserving  method for solving the Allen--Cahn equation based on the second-order backward differentiation formula (BDF2) with variable time steps and the scalar auxiliary variable (SAV) approach.  To this end, we first design a novel and essential auxiliary functional that serves twofold functions: (i) ensuring that a first-order approximation to the auxiliary variable, which is essentially important for deriving the unconditional energy dissipation law, does not affect the second-order temporal accuracy of the phase function $\phi$; and (ii) enabling the development of effective stabilization terms that facilitate the construction of maximum bound principle (MBP)-preserving linear methods. Combining this novel functional with a standard central difference stencil, we then propose a linear, second-order variable-step BDF2 type stabilized exponential SAV scheme, referred to as BDF2-sESAV-I. The scheme is proven to preserve both the discrete modified energy dissipation law under the temporal stepsize ratio $ 0 < r_{k} := \tau_{k}/\tau_{k-1} < 4.864 - \delta $ with a positive constant $\delta$ and the MBP under $ 0 < r_{k} < 1 + \sqrt{2} $. Moreover,  the approximation of the original energy by the modified energy is analyzed. By leveraging the kernel recombination technique,  optimal $ H^{1}$- and $ L^{\infty}$-norm error estimates for the variable-step BDF2-sESAV-I scheme are rigorously established. Numerical examples are carried out to verify the theoretical results and demonstrate the effectiveness and efficiency of the proposed scheme.
\end{abstract}

%% Keywords
\begin{keyword}
	Variable-step BDF2 method \sep Stabilized ESAV \sep Energy dissipation law \sep Maximum bound principle \sep $ L^{\infty}$-norm error estimate \sep Adaptive time-stepping.
	
	\MSC[2020] 35K58 \sep 65M06 \sep 65M12 \sep 65M15 \sep 65M50
	
\end{keyword}

\end{frontmatter}

%%%%%%%%%%%%%%%%%%%%%%
\section{Introduction}
%%%%%%%%%%%%%%%%%%%%%
To model the motion of the anti-phase boundaries in crystalline solids, Allen \& Cahn proposed the following classical Allen--Cahn equation \cite{Acta_Allen_1979}
\begin{equation}\label{Model:tAC}
	\begin{aligned}
		\p_t \phi   = \varepsilon^2 \Delta \phi + f( \phi ), \quad t >0, \  \mathbf{x} \in \Omega,
	\end{aligned}
\end{equation}
with initial condition $ \phi( \mathbf{x}, 0 ) = \phi_{\text {init }} ( \mathbf{x} ) $ for any $ \mathbf{x} \in \bar{\Omega} $, where $\Omega \subset \mathbb{R}^d (d = 1,2,3)$ is a bounded
Lipschitz domain, $ \phi( \mathbf{x}, t ) $ is the difference between the concentrations of two
components of the alloy, $  0 < \varepsilon \ll 1 $ is the interaction length that describes the thickness of the transition boundary between materials, and $ f(\phi)  $ is a continuously differentiable nonlinear reaction. Equipped with the suitable boundary condition, such as periodic or homogeneous Neumann boundary condition, model \eqref{Model:tAC} can be viewed as an $ L^{2} $ gradient flow associating with the free energy
\begin{equation*}\label{def:energy}
	\begin{aligned}
		E[\phi] := \int_{\Omega} \Big( \frac{\varepsilon^2}{2} \vert \nabla \phi(\mathbf{x}) \vert^2 
		+ F( \phi(\mathbf{x}) ) \Big) d \mathbf{x},
	\end{aligned}
\end{equation*}
where $F$ represents the general nonlinear potential function satisfying $ - F' = f $, see \eqref{poten:dw} and \eqref{poten:fh} for two popular choices of potentials. Recently, some variants of \eqref{Model:tAC} have been developed to describe various processes of phase transition, such as the mass-conserving Allen--Cahn equation \cite{IMA_1992_Rubinstein} and the nonlocal Allen--Cahn equation \cite{FIC_2006_Bates,SINUM_Du_2019}.

The solution to \eqref{Model:tAC} satisfies the so-called \textit{energy dissipation law} in the sense that the energy $ E[\phi] $ monotonically decreases along with the time, i.e., $ \frac{ d }{ dt } E[ \phi(t) ] \leq 0 $. Furthermore, if the following condition is imposed
\begin{equation}\label{Condi:MBP}
	\text{there exists a constant} \  \beta > 0 \  \text{such that} \  f(\beta) \leq 0 \leq f (-\beta),
\end{equation}
model \eqref{Model:tAC} also satisfies the \textit{maximum bound principle} (MBP),  i.e.,
\begin{equation}\label{MBP}
	\max_{\mathbf{x} \in \bar{\Omega}} |\phi_{\text {init }}(\mathbf{x})| \leq \beta \quad \Longrightarrow 
	\quad \max_{\mathbf{x} \in \bar{\Omega}}|\phi(\mathbf{x},t)| \leq \beta, \quad \forall t>0,
\end{equation}
see \cite{SIREV_Du_2021} for details.
Therefore, to achieve stable numerical simulations and avoid nonphysical solutions, it is highly desirable for numerical methods to preserve the discrete energy dissipation law and MBP. That is also the main objective of this paper.

Another key feature of the Allen--Cahn equation is that the dynamics process usually takes a long time to reach the steady state. Moreover, due to the nonlinear interaction, the associated energy decays quickly at the early stage of the dynamics, while it decays rather slowly when it reaches a steady state \cite{MOC_2023_Ju,SINUM_2020_Liao}. This recommends the  
\textit{non-uniform temporal grids}, which necessitates that the time-stepping method considered is theoretically reliable, i.e., stable, convergent and preferably structure-preserving. 
However, it is highly nontrivial and much challenging for numerical analysis of variable-step multi-step time integration methods, for example, the BDF2 method \cite{BIT_1998_Becker,ESAIM_2024_Fu}. Due to its strong stability \cite{SINUM_1974_Gear,MOC_2021_Liao,SISC_1997_Shampine}, the variable-step BDF2 method has been frequently employed in recent years to establish energy-stable numerical schemes for phase-field model, including, but not limited to, the Allen--Cahn equation, see 
\cite{SINUM_2020_Liao,SINUM_2019_Chen,JCAM_2024_Zhao,IMA_2022_Liao,JSC_2023_Chen,MCS_2023_Li,CNSNS_2024_Mei,ANM_2025_Zhang}. Specifically, for model \eqref{Model:tAC} with double-well potential, the authors in 
\cite{SINUM_2020_Liao} proposed a fully-implicit BDF2 scheme, and established the discrete energy dissipation law under $ r_{k} < 3.561 $ and MBP under $ r_{k} < 1 + \sqrt{2} $ by developing a kernel recombination technique. Meanwhile, second-order convergence in the maximum-norm was derived with the help of discrete complementary convolution kernels. This is the first paper about a nonlinear variable-step BDF2 scheme that can preserve both the energy dissipation law and MBP simultaneously. Employing a similar approach, Hu et al. \cite{JCAM_2024_Zhao} investigated the space-fractional Allen--Cahn equation by using the variable-step BDF2 method, and established the discrete energy dissipation law under $ r_{k} < 4.864 $ and MBP under $ r_{k} < 1 + \sqrt{2} $. However, it must be pointed out that the idea of \cite{SINUM_2020_Liao} cannot be directly extended to construct a linear scheme, and of course, the fully-implicit method, in which a nonlinear iteration must be implemented at each time level, may result in expensive computational costs. To improve the computational efficiency, and maintain the inherent physical properties, it is essential to develop a linear, structure-preserving BDF2 scheme with variable time steps. Therefore, this work can be regarded as an essential improvement of the fully-implicit BDF2 scheme \cite{SINUM_2020_Liao}. 

In the past decades, a large amount of research has been devoted to modeling the phase field models with preservation of the discrete energy dissipation law, including convex splitting methods \cite{SINUM_2013_Baskaran,MRS_1998_Eyre}, stabilized implicit-explicit methods \cite{DCDS_2010_Shen,SINUM_2006_Xu}, exponential time differencing (ETD) methods \cite{SIREV_Du_2021,MOC_2018_Ju}, invariant energy quadratization (IEQ) methods \cite{JCP_2016_Yang}, scalar auxiliary variable (SAV) methods \cite{SINUM_2018_Shen,JCP_2018_Shen,SIREV_2019_Shen}, etc. Notably, the SAV methods are more convenience in designing linear and unconditionally energy dissipative schemes, see \cite{SISC_2019_Akrivis,BIT_2022_Kemmochi,SISC_2020_Liu} and the references therein. In \cite{JCP_2018_Shen,SIREV_2019_Shen}, Shen et al. proposed an efficient BDF2-SAV scheme that is unconditionally energy dissipative with respect to a modified energy. Very recently, the authors in \cite{JSC_Qiao_2023} presented a variable-step BDF2-SAV scheme and demonstrated its unconditional energy dissipation law and temporal convergence. However, the MBP is neither considered nor proved in the aforementioned BDF2-based SAV schemes.  Actually, the construction and analysis of MBP-preserving SAV schemes, particularly BDF2-based numerical methods, is not trivial. This complexity arises from factors such as the uncertainty in the signs of the nonlinear term coefficients, as well as negativity and non-monotonicity of the discrete BDF2 coefficients.

Recently, the preservation of MBP has attracted increasingly attention in the development of numerical methods for the Allen–Cahn equation \eqref{Model:tAC}. In \cite{SISC_2020_LiBuyang}, the authors proposed a class of high-order MBP-preserving methods based on the exponential integrator in time, and lumped mass finite element method in space, where values exceeding the maximum-bound at nodal points are eliminated by a cut-off approach. Combining the SAV method with the cut-off postprocessing technique, Yang et al. \cite{JSC_2022_Yang} presented and analyzed a class of energy dissipative and MBP-preserving schemes. Doan et al. presented first- and second-order low regularity integrators (LRI) schemes by iteratively using Duhamel’s formula in \cite{BIT_2023_Schratz}, where the discrete energy stability and MBP of the second-order LRI scheme were established by imposing that $ f(\phi) $ is twice continuously differentiable and nonconstant on $ [-\beta, \beta] $.
By introducing an artificial stabilization term, Du et al. developed first-order ETD and second-order ETD Runge-Kutta (ETDRK2) schemes for the nonlocal Allen--Cahn equation in \cite{SINUM_Du_2019}, which preserves the discrete MBP unconditionally. Subsequently, they established an abstract framework on MBP-preservation of the ETD methods for a class of semilinear parabolic equations \cite{SIREV_Du_2021}. Borrowing the idea of stabilized  ETD approach, several MBP-preserving SAV methods were successfully proposed, see \cite{SINUM_Ju_2022,JSC_Ju_2022}. Very recently, this stabilized technique was also employed to construct a linear variable-step BDF2 scheme in \cite{MOC_2023_Ju}, where the preservation of the MBP was established, but unfortunately, the discrete energy dissipation law was not considered. Instead, the authors established an energy stability result for the scheme under $ r_{k} < 1 + \sqrt{2} $, in the sense that the discrete energy is uniformly bounded by the initial energy plus a constant, see \cite[Theorem 4.2 \& Remark 4.2]{MOC_2023_Ju}. Actually, to the best of our knowledge, there is no existing linear BDF2 scheme, whether on uniform or non-uniform temporal grids, that can simultaneously preserve the energy dissipation law and MBP for the Allen--Cahn equation \eqref{Model:tAC}.  

Our main goal is to construct and analyze linear, energy dissipative, and MBP-preserving BDF2 schemes with variable steps for the Allen--Cahn equation \eqref{Model:tAC} using the exponential SAV (ESAV) approach, in which a first-order method (in time) is used to approximate the auxiliary variable. To this end, we introduce a novel and crucial auxiliary functional in subsection \ref{subsec:AuxF}, which ensures that the first-order approximation of the auxiliary variable does not affect the second-order accuracy of $ \phi $. More importantly, together with the variable-step BDF2 method, such an auxiliary functional allows us to design stabilized MBP-preserving ESAV schemes with different stabilization terms: one unbalanced and two balanced. Among these, the unbalanced scheme is recommended due to its excellence in preserving discrete MBP for large temporal stepsize. In particular, for the recommended scheme, the following main theoretical results are established:
	\begin{itemize}
		\item The scheme is \textit{unconditionally energy dissipative} under the temporal stepsize ratio $ 0< r_{k} < 4.8645 - \delta $ and \textit{MBP}-preserving for $ 0< r_{k} < 1 + \sqrt{2} $. To the best of our knowledge, this is the first time that a linear (variable-step) BDF2 scheme has been developed with provable preservation of both the energy dissipation law and MBP in discrete settings;
		
		\item \textit{Optimal error estimates} for the phase function in $H^{1}$- and $ L^{\infty} $-norm are discussed by employing the kernel recombination technique developed in 
		\cite{MOC_2023_Ju,SINUM_2020_Liao,SISC_2016_Xu};
		
		\item The difference between \textit{the original energy} and \textit{the modified energy} is analyzed, demonstrating that the modified energy serves as an approximation of the original energy and is \textit{uniformly stable} with respect to the initial energy. This appears to be the first time such a result has been established for an SAV-type method.
		
	\end{itemize}

The remainder of the paper is organized as follows. 
In Section \ref{sec:Scheme}, a novel and essential auxiliary functional is designed, by which three stabilized ESAV schemes with different stabilization terms based on the variable-step BDF2 formula are presented.  In Section \ref{sec:ED_MBP}, the discrete energy dissipation law and MBP of the proposed schemes, along with an analysis of the approximation to the original energy by the modified one, are established. Then, we present the optimal-order error estimates in the $H^{1}$- and $ L^{\infty} $-norm in Section \ref{sec:Error}. Numerical experiments are carried out to illustrate the performance of the proposed schemes in Section \ref{sec:NumTest}. Some concluding remarks are finally drawn in the last section. 

%%%%%%%%%%%%%%%%%%%%%%%
\section{A novel linear second-order variable-step BDF2-sESAV scheme}\label{sec:Scheme}
%%%%%%%%%%%%%%%%%%%%%%%
In this section, we first present some notations related to the temporal and spatial discretizations, 
and then construct linear stablized BDF2 schemes for \eqref{Model:tAC} based on a novel auxiliary functional. For the simplicity of presentation, we focus on the two-dimensional square domain $ \Omega=(0,L)^2$ for the Allen--Cahn equation equipped with periodic boundary conditions throughout this paper. It is easy to extend the corresponding results to the three-dimensional case and/or homogeneous Neumann boundary condition.

%%%%%%%%%%%%%%%%%%%%%%
\subsection{Preliminaries}
%%%%%%%%%%%%%%%%%%%%%%
To construct numerical schemes with variable time steps, we consider a nonuniform temporal partition $ 0 = t_{0} < t_{1} < \cdots < t_{N} = T$ with stepsize $ \tau_{k} := t_{k} - t_{k-1} $ for $ 1 \leq k \leq N $. Denote by $ \tau := \max_{1 \leq k \leq N} \tau_{k}$ the maximum temporal stepsize and by $ r_{k} := \tau_{k}/\tau_{k-1}$ $ (k \geq 2) $ with $ r_{1} \equiv 0 $ the adjacent temporal stepsize ratio. Let $r_{\max }:=\max_{1 \leq k \leq N}r_{k}$. For any real sequence $ \{ w^{k} \}^{N}_{k=0} $, set $ \nabla_{\tau} w^{k} = w^{k} - w^{k-1}$ and $ \md_{1} w^{k} = \nabla_{\tau} w^{k}/\tau_{k} $. The variable-step BDF2 formula is defined as
$$
\md_{2} w^{n} := \frac{ 1 + 2 r_{n} }{\tau_{n} (1 + r_{n}) } \nabla_{\tau} w^{n} - \frac{ r_{n}^{2} }{\tau_{n} (1 + r_{n}) } \nabla_{\tau} w^{n-1}\quad \text{for } \   n \ge 2.
$$
In particular, when $n=1$,  we use the BDF1 (i.e., backward Euler) formula $ \md_{1} w^{1}$ for the first time level discretization.
Then, we rewrite the above variable-step BDF formula as a unified discrete convolution summation
\begin{equation}\label{Formula:BDF2}
	\begin{aligned}
		\md_{2} w^{n} = \sum^{n}_{k=1} b^{(n)}_{n-k} \nabla_{\tau} w^{k}\quad \text{for } \  1 \le n \le N,
	\end{aligned}
\end{equation}
where $\{  b^{(n)}_{n-k} \}_{k=1}^{n}$ are called the discrete convolution kernels, given by $ b^{(1)}_{0} := 1/\tau_{1}$ for $n =1$, and for $n \ge 2$,
\begin{equation}\label{Def:DCK}
	\begin{aligned}
		b^{(n)}_{0} := \frac{ 1 + 2 r_{n} }{\tau_{n} (1 + r_{n}) } > 0, ~~ b^{(n)}_{1} := - \frac{ r_{n}^{2} }{\tau_{n} (1 + r_{n}) } < 0, ~~  b^{(n)}_{j} := 0 \   \   \text{for} \,\  2 \leq j \leq n-1.
	\end{aligned}
\end{equation}

The following lemma shows the positiveness of the kernels $ \{b^{(n)}_{n-k}\} $, which plays a key role in the proof of the discrete energy dissipation law and error analysis.
\begin{lemma}\label{lem:BDF2_P1} 
	Assume the stepsize ratio condition $ 0 < r_k < r_{\max} := 4.864 - \delta$ for $2 \leq k \leq n$ holds, where $ 0< \delta < 4.864$ is any given small constant. Then, the convolution kernels $\{  b^{(n)}_{n-k} \}_{k=1}^{n}$ are positive definite, that is, for any real sequence $\left\{w_k\right\}_{k=0}^n$, it holds
	\begin{equation}\label{BDF:Pd}
		\begin{array}{l}
			w_n \sum\limits_{k=1}^n b_{n-k}^{(n)} w_k \geq 
			\left\{
			\begin{aligned}
				& G[ w^n ] - G[ w^{n-1} ] + \frac{\delta w_n^2}{32 \tau_n}, & n \geq 2 ,\\
				& G[ w^{1} ],  &  n = 1, 
			\end{aligned}
			\right.
		\end{array}
	\end{equation}
	where $ G[ w^n ] := \frac{r_{n+1} \sqrt{r_{\max }}}{2\left(1+r_{n+1}\right)} \frac{w_n^2}{\tau_n} $.
\end{lemma}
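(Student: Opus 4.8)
The plan is to handle the trivial case $n=1$ directly and then, for $n\ge 2$, reduce \eqref{BDF:Pd} by a single weighted Young-type manipulation to one scalar inequality relating the two ratios $r_n$ and $r_{n+1}$. For $n=1$, \eqref{Def:DCK} gives $w_1\sum_{k=1}^{1}b^{(1)}_{1-k}w_k=b^{(1)}_0 w_1^2=w_1^2/\tau_1$; since $0<r_2<r_{\max}$ together with $r_{\max}^{3/2}<2(1+r_{\max})$ yields $\frac{r_2\sqrt{r_{\max}}}{2(1+r_2)}<\frac{r_{\max}^{3/2}}{2(1+r_{\max})}<1$, we get $w_1^2/\tau_1\ge G[w^1]$ at once.

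For $n\ge 2$, since $b^{(n)}_j=0$ for $j\ge 2$ the quadratic form collapses to $b^{(n)}_0 w_n^2+b^{(n)}_1 w_nw_{n-1}$, with $b^{(n)}_0>0$ and $b^{(n)}_1<0$. I would use, for every $s>0$, the identity
\begin{equation*}
b^{(n)}_1 w_nw_{n-1}=\frac{b^{(n)}_1}{2s}w_n^2+\frac{b^{(n)}_1 s}{2}w_{n-1}^2-\frac{b^{(n)}_1}{2}\Big(\frac{w_n}{\sqrt{s}}-\sqrt{s}\,w_{n-1}\Big)^2,
\end{equation*}
take $s=\sqrt{r_{\max}}$, and discard the last term, which is nonnegative because $b^{(n)}_1<0$. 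Invoking $\tau_n=r_n\tau_{n-1}$ and the explicit values in \eqref{Def:DCK}, the coefficient $\tfrac12 b^{(n)}_1\sqrt{r_{\max}}$ of $w_{n-1}^2$ equals precisely $-\frac{r_n\sqrt{r_{\max}}}{2(1+r_n)\tau_{n-1}}$, i.e.\ the term $-G[w^{n-1}]$. Hence, after dividing by $w_n^2$ (the case $w_n=0$ being trivial) and multiplying by $\tau_n>0$, the estimate \eqref{BDF:Pd} for $n\ge 2$ reduces to the pointwise bound
\begin{equation*}
g(r_n):=\frac{1+2r_n-\frac{r_n^2}{2\sqrt{r_{\max}}}}{1+r_n}\ \ge\ \frac{r_{n+1}\sqrt{r_{\max}}}{2(1+r_{n+1})}+\frac{\delta}{32},\qquad r_n,r_{n+1}\in(0,r_{\max}).
\end{equation*}

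To establish this, I would note that the right-hand side is increasing in $r_{n+1}$, hence bounded above by its value at $r_{\max}$, namely $\frac{r_{\max}^{3/2}}{2(1+r_{\max})}+\frac{\delta}{32}$; and that a short sign analysis of $g'$ shows $g$ increases and then decreases on $(0,\infty)$, so on $[0,r_{\max}]$ it attains its minimum at an endpoint, giving $g(r_n)\ge\min\{g(0),g(r_{\max})\}=\min\{1,g(r_{\max})\}=g(r_{\max})$ (the last step because $r_{\max}>4$). It then remains to verify $g(r_{\max})-\frac{r_{\max}^{3/2}}{2(1+r_{\max})}=\frac{1+2r_{\max}-r_{\max}^{3/2}}{1+r_{\max}}\ge\frac{\delta}{32}$. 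This is exactly where the constant $4.864$ is used: it lies just below the positive root of $\rho^{3/2}=1+2\rho$ (equivalently $\rho^{3}-4\rho^{2}-4\rho-1=0$), so with $r_{\max}=4.864-\delta$ a first-order expansion of $\rho\mapsto 1+2\rho-\rho^{3/2}$ about $\rho=4.864$ gives $1+2r_{\max}-r_{\max}^{3/2}\ge\big(\tfrac32\sqrt{4.864}-2-o(1)\big)\delta$, and $\tfrac32\sqrt{4.864}-2\approx 1.31$ comfortably exceeds $(1+r_{\max})/32<0.19$, which closes the estimate.

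The algebraic identity, the bookkeeping through $\tau_n=r_n\tau_{n-1}$, and the monotonicity/unimodality computations for $g$ and for the right-hand side are routine. The genuine obstacle is the last step: the threshold $4.864$ is essentially sharp for positive definiteness of $\{b^{(n)}_{n-k}\}$ with this particular $G$, so the entire margin producing the surplus dissipation term $\tfrac{\delta w_n^2}{32\tau_n}$ comes from displacing $r_{\max}$ below that threshold, and one must check quantitatively that this margin is at least $\delta/32$ uniformly in $0<\delta<4.864$ (it is in fact of order $\delta$ with a constant several times larger, which is why the benign $\tfrac1{32}$ suffices). A minor side point is that $G[w^n]$ involves the ``future'' ratio $r_{n+1}$, controlled only through the standing restriction $r_{n+1}<r_{\max}$, which is absorbed by the monotonicity in $r_{n+1}$ noted above (and for $n=N$ is irrelevant downstream, where only $G[w^N]\ge 0$ is used).
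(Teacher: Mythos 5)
Your argument is correct in substance, but it is worth noting that the paper itself does not prove the $n\ge 2$ case at all: it simply cites \cite[Lemma~3.1]{JSC_Zhang_2022} and only verifies the $n=1$ case, by checking $\frac{r_2\sqrt{r_{\max}}}{2(1+r_2)}\le 1$ exactly as you do. What you have written is essentially a self-contained reconstruction of the cited lemma, using the standard device for variable-step BDF2 kernels: complete the square on the cross term $b^{(n)}_1w_nw_{n-1}$ with the weight $s=\sqrt{r_{\max}}$ so that the $w_{n-1}^2$ contribution is exactly $-G[w^{n-1}]$ (your bookkeeping through $\tau_n=r_n\tau_{n-1}$ is right), and then reduce everything to the scalar inequality $g(r_n)\ge \frac{r_{n+1}\sqrt{r_{\max}}}{2(1+r_{n+1})}+\frac{\delta}{32}$. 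Two loose ends should be tightened before this counts as a complete proof. First, your step $\min\{1,g(r_{\max})\}=g(r_{\max})$ is justified only ``because $r_{\max}>4$,'' but $r_{\max}=4.864-\delta$ with $\delta\in(0,4.864)$ arbitrary, so $r_{\max}$ may well be below $4$; in that regime the endpoint minimum is $g(0)=1$ and one must instead check $1\ge\frac{r_{\max}^{3/2}}{2(1+r_{\max})}+\frac{\delta}{32}$, which holds easily since the first term is at most $0.8$ for $r_{\max}\le 4$ and $\delta/32<0.16$. Second, your first-order expansion of $\rho\mapsto 1+2\rho-\rho^{3/2}$ about $4.864$ only covers small $\delta$; for a uniform verification over all $\delta\in(0,4.864)$ it is cleaner to observe that $\Phi(\rho):=32\bigl(1+2\rho-\rho^{3/2}\bigr)-(4.864-\rho)(1+\rho)$ is concave on $(0,4.864)$ (its second derivative is $2-24\rho^{-1/2}<0$ there) and nonnegative at both endpoints, so it is nonnegative throughout. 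With those two repairs the proof is complete, and it has the merit of making explicit where the threshold $4.8645\ldots$ (the root of $\rho^{3/2}=1+2\rho$) and the surplus term $\frac{\delta w_n^2}{32\tau_n}$ come from, which the paper leaves hidden inside the citation.
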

\begin{proof}
The conclusion \eqref{BDF:Pd} for $ n \geq 2 $ can be found in \cite[Lemma 3.1]{JSC_Zhang_2022}. It remains to consider the case $ n = 1 $, and it is sufficient to verify the inequality $ \frac{r_{2} \sqrt{r_{\max }}}{2\left(1+r_{2}\right)} \leq 1 $, i.e., $ ( \sqrt{ r_{\max} } - 2 ) r_{2} \leq 2 $. Obviously, if $ r_{\max} \leq 4 $, this condition holds true for any $ 0 < r_{2} < r_{\max} $. Otherwise, it requires $ r_{2} \leq \frac{ 2 }{ \sqrt{ r_{\max} } - 2 } $ holds for $ 4 < r_{\max} < 4.864 $, which sufficiently needs $ r_{2} \leq \frac{ 2 }{ \sqrt{ 4.864 } - 2 } \approx 9.7348 $. In summary, \eqref{BDF:Pd} holds for all  $ 0 < r_k < r_{\max}$, ~$k \ge 2$.
\end{proof}

Given a positive integer $ M $, let $ h = L/M $ be the spatial grid length and set $ \Omega_{h} := \{ \mathbf{x}_{h} = ( ih, jh ) \mid 0 \leq i,j \leq M \} $. Let $ \mathbb{V}_h $ be the set of all $M$-periodic real-valued grid functions on $ \Omega_{h} $, i.e.,
$$
\mathbb{V}_h:=\left\{v \mid v= \{v_{i,j}\}_{i,j=1}^{M}~ \text{and}~  v ~ \text{is periodic}\right\} .
$$ 
Define the following discrete inner product, and discrete $ L^2 $ and $ L^{\infty} $ norms
$$
\langle v, w\rangle=h^2 \sum_{i, j=1}^M v_{i j} w_{i j}, \quad\|v\|=\sqrt{\langle v, v\rangle}, \quad\|v\|_{\infty}=\max _{1 \leq i, j \leq M} |v_{i j}|
$$
for any $v, w \in \mathbb{V}_h$, and
$$
\langle \mathbf{v}, \mathbf{w}\rangle = \langle v^1, w^1\rangle + \langle v^2, w^2\rangle,  \quad \|\mathbf{v}\|=\sqrt{\langle\mathbf{v}, \mathbf{v}\rangle}
$$
for any $\mathbf{v} = (v^1, v^2), \mathbf{w}= (w^1, w^2) \in \mathbb{V}_h \times \mathbb{V}_h$. 
Moreover, for any $ v \in \mathbb{V}_h $, we define the discrete Laplace operator $\Delta_h$ and gradient operator $\nabla_h$ as follows
$$
\Delta_h v_{i j}=\frac{1}{h^2}\left(v_{i+1, j}+v_{i-1, j}+v_{i, j+1}+v_{i, j-1}-4 v_{i j}\right), \quad 1 \leq i, j \leq M,
$$
$$
\nabla_h v_{i j}=\left(\frac{v_{i+1, j}-v_{i j}}{h}, \frac{v_{i, j+1}-v_{i j}}{h}\right), \quad 1 \leq i, j \leq M .
$$
Notice that $ \mathbb{V}_h $ is a finite-dimensional linear space, thus any grid function in $ \mathbb{V}_h $ and any linear operator $ P: \mathbb{V}_h \rightarrow \mathbb{V}_h $ can be treated as a vector in $ \mathbb{R}^{M^2} $ and a matrix
in $ \mathbb{R}^{M^2 \times M^2 } $, respectively. One can readily verify that the discrete matrix $\Delta_h = (d_{ij})$ is symmetric and negative semidefinite, and that its diagonal entries satisfy $ d_{ii} = - \max_{i} \sum_{ j \neq i } \vert d_{ij} \vert  $.

\begin{lemma}[\cite{SINUM_2020_Liao}]\label{lem:MBP_left} 
	 Let $ B $ be a real $ m \times m $ matrix. If the elements of $ B = ( b_{ij} ) $ fulfill $ b_{ii} = - \max_{i} \sum_{ j \neq i } \vert b_{ij} \vert  $, then for any $ a >0 $ and $ v \in \mathbb{R}^{m} $, we have
	 $$
	 \| ( aI - B ) v \|_{\infty} \geq a \| v \|_{\infty},
	 $$
	 where $ I $ represents the identity matrix.
\end{lemma}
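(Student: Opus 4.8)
The plan is to recognize that the hypothesis is nothing more than a uniform diagonal-dominance condition on $aI-B$, after which the conclusion follows by inspecting a single component. Set $c:=\max_{i}\sum_{j\neq i}|b_{ij}|\ge 0$. The assumption $b_{ii}=-\max_{i}\sum_{j\neq i}|b_{ij}|$ says precisely that every diagonal entry of $B$ equals $-c$, while each individual row still satisfies $\sum_{j\neq i}|b_{ij}|\le c$. Consequently $aI-B$ has the constant positive diagonal $a+c$, and for every row $i$ one has $\sum_{j\neq i}|(aI-B)_{ij}|=\sum_{j\neq i}|b_{ij}|\le c$, so the diagonal magnitude of $aI-B$ exceeds its off-diagonal row sum by at least $a$ in each row.

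The second step is to pick, for a given $v\in\mathbb{R}^{m}$, an index $i_{0}$ with $|v_{i_{0}}|=\|v\|_{\infty}$ and to bound $\|(aI-B)v\|_{\infty}$ from below by the modulus of its $i_{0}$-th component. Splitting off the diagonal contribution $(a+c)v_{i_{0}}$ via the reverse triangle inequality and using $|v_{j}|\le\|v\|_{\infty}$ for $j\neq i_{0}$, the key estimate reads
$$\|(aI-B)v\|_{\infty}\ \ge\ \bigl|\bigl((aI-B)v\bigr)_{i_{0}}\bigr|\ \ge\ \Bigl(a+c-\sum_{j\neq i_{0}}|b_{i_{0}j}|\Bigr)\|v\|_{\infty}\ \ge\ a\|v\|_{\infty},$$
where the last inequality uses $\sum_{j\neq i_{0}}|b_{i_{0}j}|\le c$. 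This is the whole argument.

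I do not anticipate any genuine obstacle here: the statement is elementary and no induction, spectral theory, or kernel-recombination machinery is needed. The only point that deserves care is the somewhat unusual reading of the hypothesis — the $\max_{i}$ sits \emph{outside} the sum, so all diagonal entries coincide — although the estimate above in fact requires only the weaker row-wise bound $\sum_{j\neq i_{0}}|b_{i_{0}j}|\le|b_{i_{0}i_{0}}|$ together with $b_{i_{0}i_{0}}<0$. This lemma will later be invoked for a matrix obtained by shifting a constant multiple of the discrete Laplacian $\Delta_{h}$ by a positive multiple of the identity, which matches the diagonal property $d_{ii}=-\max_{i}\sum_{j\neq i}|d_{ij}|$ recorded just above, so that the MBP estimate $a\|\phi\|_{\infty}\le\|(aI-B)\phi\|_{\infty}$ becomes available at each time level.
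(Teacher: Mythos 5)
Your argument is correct: it is the standard diagonal-dominance estimate, isolating the component where $|v_{i_0}|=\|v\|_\infty$ and using that the diagonal of $aI-B$ equals $a+c$ while each off-diagonal row sum is at most $c$. The paper itself gives no proof of this lemma (it is quoted from \cite{SINUM_2020_Liao}), and your derivation coincides with the one in that reference, so there is nothing further to reconcile.
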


\begin{lemma}[\cite{SIREV_Du_2021}]\label{lem:MBP_right} 
	Under assumption \eqref{Condi:MBP}, if $\kappa \geq\left\|f^{\prime}\right\|_{C[-\beta, \beta]}$ holds for some positive constant $\kappa$, then we have $|f( \xi ) + \kappa \xi | \leq \kappa \beta$ for any $\xi \in[-\beta, \beta]$.
\end{lemma}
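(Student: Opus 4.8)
The plan is to reduce the claimed two-sided bound to a monotonicity statement about the auxiliary function $g(\xi) := f(\xi) + \kappa \xi$ on the interval $[-\beta,\beta]$. First I would differentiate: since $f$ is continuously differentiable, so is $g$, and $g'(\xi) = f'(\xi) + \kappa$. The hypothesis $\kappa \geq \|f'\|_{C[-\beta,\beta]}$ gives $\kappa \geq -f'(\xi)$ for every $\xi \in [-\beta,\beta]$, hence $g'(\xi) \geq 0$ throughout $[-\beta,\beta]$. Thus $g$ is nondecreasing on this interval, and consequently $g(-\beta) \leq g(\xi) \leq g(\beta)$ for all $\xi \in [-\beta,\beta]$.

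Next I would evaluate $g$ at the two endpoints and invoke the sign condition \eqref{Condi:MBP}. At $\xi = \beta$ we get $g(\beta) = f(\beta) + \kappa\beta \leq \kappa\beta$ because $f(\beta) \leq 0$; at $\xi = -\beta$ we get $g(-\beta) = f(-\beta) - \kappa\beta \geq -\kappa\beta$ because $f(-\beta) \geq 0$. Combining these with the monotonicity bound yields $-\kappa\beta \leq g(\xi) \leq \kappa\beta$, i.e., $|f(\xi) + \kappa\xi| \leq \kappa\beta$ for all $\xi \in [-\beta,\beta]$, which is precisely the desired estimate.

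There is essentially no obstacle here: the argument uses only the monotonicity of $g$ (equivalently, a mean value theorem estimate $g(\beta) - g(\xi) = g'(\eta)(\beta - \xi) \geq 0$ for some $\eta$ between $\xi$ and $\beta$, and its symmetric counterpart at $-\beta$) together with the defining inequalities in \eqref{Condi:MBP}. Since $f \in C^1$ is assumed from the outset, the monotonicity phrasing is the cleanest; the positivity of $\kappa$ is not needed for the inequality itself beyond guaranteeing $\kappa\beta > 0$, and is instead relevant to the later use of this lemma in the MBP analysis, where $\kappa$ is the coefficient of the stabilization term.
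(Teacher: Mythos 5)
Your argument is correct, and the paper itself gives no proof of this lemma---it is quoted directly from \cite{SIREV_Du_2021}, where the standard proof is exactly the monotonicity argument you describe: $g(\xi)=f(\xi)+\kappa\xi$ is nondecreasing on $[-\beta,\beta]$ because $g'=f'+\kappa\ge 0$, and the endpoint values are pinned between $-\kappa\beta$ and $\kappa\beta$ by the sign condition \eqref{Condi:MBP}. Nothing further is needed.
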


\subsection{An essential auxiliary functional}\label{subsec:AuxF}
In \cite{JSC_Qiao_2023}, the authors introduced an auxiliary functional $ W(z) $ of $ z $ satisfying
\begin{equation*}\label{AuxF_Hou}
	W (1) = 1, \quad \lim_{z\rightarrow1} \frac{ W (z) - 1 }{ 1 - z } = -1,
\end{equation*}
that is, $ W'(1) = -1 $. It is easy to check that $ z W(z) $ is a second-order approximation to $1$, if $ z $ approximates $ 1 $ with first-order accuracy. This functional ensures that their proposed SAV scheme is second-order, although the auxiliary variable has only first-order accuracy, see \cite{JSC_Qiao_2023} for details. Similar ideas can also be found in \cite{CMAME_Huang_2022,SISC_Huang_Shen_2020}.

However, it must be pointed out that such a functional cannot guarantee the non-negativity and boundedness of $ z W(z) $, which may be detrimental to the construction of MBP-preserving numerical methods. To address this issue, we introduce a novel class of auxiliary functional $ V( \cdot ) $, which satisfy the following two assumptions:
\begin{itemize}
	\item[(\bf A1).]  $ V(\cdot) \in C^{1} ( \mathbb{R} ) \cap W^{2,\infty} ( \mathbb{R} )$ such that $ V(1) = 1 $, $ V'(1) = 0 $, and 
	there exists a positive constant $ K $ such that $ | V' ( \cdot ) | \leq K  $;
	\item[(\bf A2).] It is non-negative and bounded by $1$, i.e., $ 0 \leq V( \cdot ) \leq 1  $.
\end{itemize}

Such functional exists, and a simple example can be constructed using the well-known cubic Hermite interpolation, e.g.,
\begin{equation}\label{f_cut}
	\begin{array}{l}
		V(z) :=
		\left\{
		\begin{aligned}
			& 0, & z \in (-\infty, 0] ,\\
			& H_1 ( z ),  &  z \in (0,1/2),          \\
			& 2z - z^2,  &    z \in [1/2, 3/2],\\				
			& H_2 ( z ),  &  z \in (3/2,2),       \\
			& 0, &  z \in [2,\infty),
		\end{aligned}
		\right.
	\end{array}
\end{equation}
where $ H_1 ( z ) $ and $ H_2 ( z ) $ be the cubic Hermite interpolation on $ (0,1/2) $ and $ (3/2,2) $ respectively, i.e.,
$$
H_1 ( z ) =  - 8 z^3 + 7 z^2 > 0, ~ z \in (0,1/2), ~~
H_2 ( z ) = 8 z^3 - 41 z^2 + 68 z - 36 > 0, ~ z \in (3/2,2).
$$
It is easy to check that the above function \eqref{f_cut} satisfies the assumptions (\textbf{A1})--(\textbf{A2}) with $ K = 49/24 $; see Fig. \ref{fig_1}.
\begin{figure}[!htbp]
	\vspace{-6pt}
	\centering
	\begin{minipage}[t]{0.5\linewidth}
		\centering
		\includegraphics[width=2.5in]{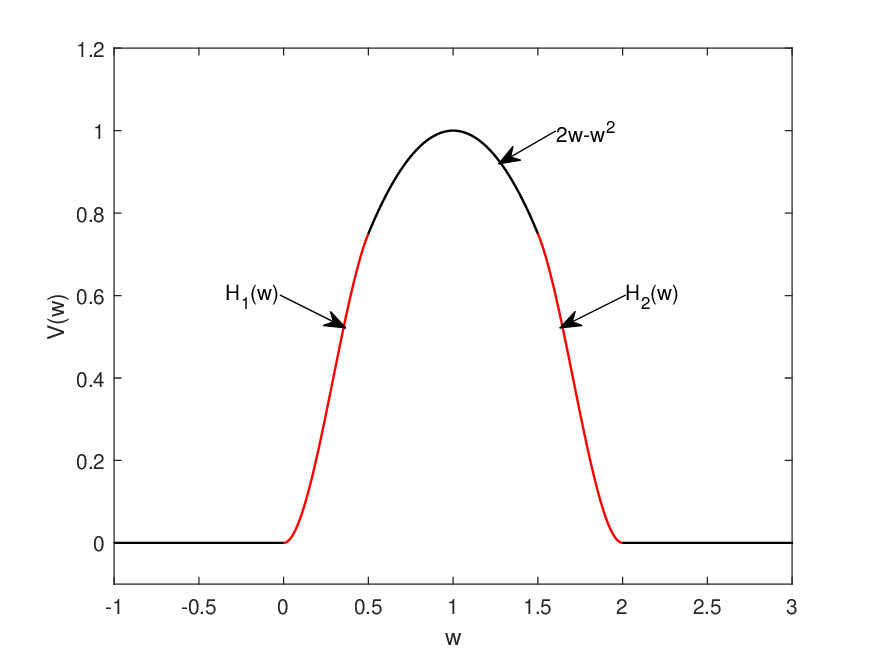}
		\caption{ Plot of $ V(z) $ on the interval $ [-1,3] $ }
		\label{fig_1}
	\end{minipage}
	\vspace{-6pt}
\end{figure}
\begin{remark}
	In the stabilized ESAV scheme to be designed, the assumption (\textbf{A1}) on the functional $ V(\cdot) $ ensures that the first-order approximation of the auxiliary variable does not affect the second-order convergence of the phase function $ \phi $. The non-negativity and boundedness assumption (\textbf{A2}) allows us to develop several different stabilization terms, and thus MBP-preserving numerical methods.
\end{remark}

%%%%%%%%%%%%%%%%%%%%%%%%%%%%%%%%%%%%%%%%%%%%%%%%%%
\subsection{Variable-step BDF2-sESAV schemes}
%%%%%%%%%%%%%%%%%%%%%%%%%%%%%%%%%%%%%%%%%%%%%%%%%%
Let us introduce an auxiliary variable $ R(t) = E_1[\phi] := \int_{\Omega}  F( \phi(\mathbf{x},t) )  d \mathbf{x}$. Then, the original energy $E[\phi]$ is equivalent to
$$
\me[\phi, R] = \frac{\varepsilon^2}{2}\|\nabla \phi\|_{L^2}^2 + R(t).
$$
Denote $g(\phi, R):=\frac{\exp \left\{R\right\}}{ \exp \left\{E_{1}[\phi]\right\} }$, which is always equal to 1 at the continuous level. Then, model \eqref{Model:tAC} can be rewritten into the following equivalent system:
\begin{equation}\label{sav_1}
		\phi_t  =\varepsilon^2 \Delta \phi + V( g( \phi, R ) ) f(\phi),~~
		R_t  = -V( g( \phi, R ) ) \left(f(\phi), \phi_t\right) .
\end{equation}

Let $ \{ \phi^n, R^n \} $ be the fully discrete approximations of the exact solutions $ \{ \phi( t_{n} ), R( t_n ) \} $ to the original problem \eqref{sav_1}. 
For $ n \geq 1 $, we first compute a predicted solution $ \hat{\phi}^{n} $ using a linear first-order BDF1 scheme with the temporal stepsize $ \tau_{n} $, given by
\begin{equation}\label{sch:2_3}
		\frac{\hat{\phi}^{n} - \phi^{n-1}}{ \tau_{n} }  = \varepsilon^2 \Delta_h \hat{\phi}^{n} + f(\phi^{n-1}) - \kappa ( \hat{\phi}^{n} - \phi^{n-1}) .
\end{equation}
To develop a stabilized fully-discrete scheme, we introduce a modified and \textit{unbalanced} stabilization term
\begin{equation}\label{def:ST}
	\kappa \big( \phi^{n} - V ( g_h(\hat{\phi}^{n}, R^{n-1}) ) \hat{\phi}^{n} \big).
\end{equation}
Here $ \kappa \geq 0 $ is a stabilization constant, and $ g_h(v, w):= \frac{\exp \left\{w\right\}}{ \exp \left\{E_{1h}[v]\right\} } $ with $E_{1 h}$ being the discrete version of $E_1$, i.e., $E_{1 h}(v):=\langle F(v), 1\rangle$. The imbalance, reflected in $ V ( g_h(\hat{\phi}^{n}, R^{n-1}) ) \neq 1 $ for $ n \geq 1 $ generally, is beneficial for preservation of the MBP.

Then, a novel variable-step BDF2 type stabilized ESAV scheme (denoted as BDF2-sESAV-I) is given by
\begin{equation}\label{sch:2_1}
	\left\{
	\begin{aligned}
		&\md_{2} \phi^{n} = \varepsilon^2 \Delta_h \phi^{n} + V( g_h(\hat{\phi}^{n}, R^{n-1}) ) f(\hat{\phi}^{n}) 
		- \kappa \big( \phi^{n} - V ( g_h(\hat{\phi}^{n}, R^{n-1}) ) \hat{\phi}^{n} \big), \\
		&\md_{1} R^{n}  = - \big\langle V ( g_h(\hat{\phi}^{n}, R^{n-1}) ) f(\hat{\phi}^{n}) 
		- \kappa \big( \phi^{n} - V ( g_h(\hat{\phi}^{n}, R^{n-1}) ) \hat{\phi}^{n} \big), \md_{1} \phi^{n} \big\rangle,
	\end{aligned}
	\right.
\end{equation}
with initial values $ \{ \phi^0, R^0 \} = \{ \phi_{\text{init}}, E_{1h}[\phi_{\text{init}}] \} $. Note that the first equation in \eqref{sch:2_1} is equivalent to the following form:
\begin{equation*}\label{sch:2_4}
		(( b^{(n)}_{0} + \kappa ) I - \varepsilon^2 \Delta_h) \phi^{n} = b^{(n)}_{0} \phi^{n-1} - \sum^{n-1}_{k=1} b^{(n)}_{n-k} \nabla_{\tau} \phi^{k} +  V( g_h(\hat{\phi}^{n}, R^{n-1}) ) \big( f(\hat{\phi}^{n}) + \kappa \hat{\phi}^{n} \big) .
\end{equation*}
It is easy to see that $ ( b^{(n)}_{0} + \kappa ) I - \varepsilon^2 \Delta_h $ is self-adjoint and positive definite, which implies the unique solvability of the system \eqref{sch:2_1}.

%{\color{blue}
%	\begin{remark}
%		For the case $ n = 1 $, a more efficient alternative is to directly set the predicted value $ \hat{\phi}^{1} = \phi^{0} $. However, for the sake of consistency within the analytical framework, we continue to employ the prediction given by \eqref{sch:2_3} in the subsequent analysis. Actually, since $ \| \phi^{0} \|_{\infty} \leq \beta $, this choice does not compromise the validity of the subsequent proofs of the MBP and energy dissipation law; see Theorems \ref{thm:MBP_2} and \ref{thm:Err_Energy}, respectively.
%	\end{remark}
%}

\begin{remark}\label{def:other_S} 
	For the Allen--Cahn type equation \eqref{Model:tAC}, there are two other \textit{balanced} stabilization terms often used to devise stabilized MBP-preserving schemes, i.e.,
	\begin{itemize}
		\item $ \kappa ( \phi^{n} - \hat{\phi}^{n} ) $: considered in fully-implicit \cite{SINUM_2020_Liao} and implicit-explicit \cite{MOC_2023_Ju} BDF2-type schemes;
		
		\item $ \kappa V ( g_h(\hat{\phi}^{n}, R^{n-1}) ) ( \phi^{n} - \hat{\phi}^{n} ) $: proposed in stabilized exponential integrator \cite{SINUM_Ju_2022} and Crank-Nicolson \cite{JSC_Ju_2022} SAV schemes with a different functional $ V ( z ) := z $.
	\end{itemize}
	Together with these two stabilization terms, instead of \eqref{def:ST}, one can obtain the other two variable-step BDF2-sESAV schemes: 
	\begin{equation*}\label{sch:2_2_1}
			\left\{
		\begin{aligned}
			&\md_{2} \phi^{n}  = \varepsilon^2 \Delta_h \phi^{n} + V( g_h(\hat{\phi}^{n}, R^{n-1}) ) f(\hat{\phi}^{n}) - \kappa ( \phi^{n} - \hat{\phi}^{n} ),\\
			&\md_{1} R^{n}  = - \big\langle V ( g_h(\hat{\phi}^{n}, R^{n-1}) ) f(\hat{\phi}^{n}) - \kappa(\phi^{n} - \hat{\phi}^{n}), \md_{1} \phi^{n} \big\rangle,
		\end{aligned}
			\right.
	\end{equation*}
	and  
	\begin{equation*}\label{sch:2_3_1}
			\left\{
		\begin{aligned}
			& \md_{2} \phi^{n}  = \varepsilon^2 \Delta_h \phi^{n} + V( g_h(\hat{\phi}^{n}, R^{n-1}) ) f(\hat{\phi}^{n}) - \kappa V ( g_h(\hat{\phi}^{n}, R^{n-1}) ) ( \phi^{n} -  \hat{\phi}^{n} ),\\
			& \md_{1} R^{n}  = - V ( g_h(\hat{\phi}^{n}, R^{n-1}) ) \big\langle f(\hat{\phi}^{n}) - \kappa ( \phi^{n} - \hat{\phi}^{n} ), \md_{1} \phi^{n} \big\rangle,
		\end{aligned}
			\right.
	\end{equation*}
	which are abbreviated as BDF2-sESAV-II  and BDF2-sESAV-III below.
	It is easy to check that both schemes are uniquely solvable. We will observe in the following Remark \ref{rem:MBP} and subsection \ref{sec:ex2} that the BDF2-sESAV-I scheme \eqref{sch:2_1}  with the novel proposed stabilization term has a significant advantage in preserving MBP, compared to BDF2-sESAV-II  and BDF2-sESAV-III. Therefore, we restrict ourselves to the discussion of the variable-step BDF2-sESAV-I scheme in the rest of this paper. 
\end{remark}

The discrete MBP for the BDF1 scheme \eqref{sch:2_3} is listed in the following lemma, which plays a significant role in the proof of the MBP for the BDF2-sESAV-I scheme.
\begin{lemma}[\cite{JCM_Tang_2016,CMS_Shen_2016}]\label{lem:MBP_BDF1} 
	Assume that $\|\phi^{n-1}\|_{\infty} \leq \beta$ and the stabilization parameter $
	\kappa \geq \left\|f^{\prime}\right\|_{C[-\beta, \beta]},
	$
	then it unconditionally holds for the BDF1 scheme \eqref{sch:2_3} that $\|\hat{\phi}^{n}\|_{\infty} \leq \beta$ for $n=1, \cdots, N$.
\end{lemma}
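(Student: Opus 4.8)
The plan is to recast the one-step scheme \eqref{sch:2_3} in the matrix form $(aI-B)\hat{\phi}^{n}=(\text{data})$ that fits Lemma~\ref{lem:MBP_left}, and then to control the $L^{\infty}$-norm of the data by means of Lemma~\ref{lem:MBP_right}. No induction and no restriction on $\tau_n$ will be needed for this lemma itself: it is the single-step implication $\|\phi^{n-1}\|_{\infty}\le\beta\Rightarrow\|\hat{\phi}^{n}\|_{\infty}\le\beta$, the hypothesis $\|\phi^{n-1}\|_{\infty}\le\beta$ being propagated inductively elsewhere in the analysis.

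First I would multiply \eqref{sch:2_3} by $\tau_n$ and collect the terms containing $\hat{\phi}^{n}$ on the left, obtaining
$$\big((1+\tau_n\kappa)I-\tau_n\varepsilon^2\Delta_h\big)\hat{\phi}^{n}=\phi^{n-1}+\tau_n\big(f(\phi^{n-1})+\kappa\phi^{n-1}\big).$$
Set $a:=1+\tau_n\kappa>0$ and $B:=\tau_n\varepsilon^2\Delta_h$. Since $\Delta_h=(d_{ij})$ satisfies $d_{ii}=-\max_i\sum_{j\neq i}|d_{ij}|$ (as recorded just before Lemma~\ref{lem:MBP_left}) and $\tau_n\varepsilon^2>0$, the scaled matrix $B$ inherits the same property $b_{ii}=-\max_i\sum_{j\neq i}|b_{ij}|$. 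Hence Lemma~\ref{lem:MBP_left} applies with this $a$ and $B$, yielding $\|(aI-B)\hat{\phi}^{n}\|_{\infty}\ge a\|\hat{\phi}^{n}\|_{\infty}$.

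Next I would bound the data. Because $\|\phi^{n-1}\|_{\infty}\le\beta$ and $\kappa\ge\|f'\|_{C[-\beta,\beta]}$, Lemma~\ref{lem:MBP_right} gives $|f(\phi^{n-1}_{ij})+\kappa\phi^{n-1}_{ij}|\le\kappa\beta$ at every grid point, i.e. $\|f(\phi^{n-1})+\kappa\phi^{n-1}\|_{\infty}\le\kappa\beta$. Combining with the triangle inequality,
$$\big\|\phi^{n-1}+\tau_n\big(f(\phi^{n-1})+\kappa\phi^{n-1}\big)\big\|_{\infty}\le\beta+\tau_n\kappa\beta=(1+\tau_n\kappa)\beta=a\beta.$$
Chaining the two estimates through the displayed identity, $a\|\hat{\phi}^{n}\|_{\infty}\le\|(aI-B)\hat{\phi}^{n}\|_{\infty}=\|\phi^{n-1}+\tau_n(f(\phi^{n-1})+\kappa\phi^{n-1})\|_{\infty}\le a\beta$, and dividing by $a>0$ gives $\|\hat{\phi}^{n}\|_{\infty}\le\beta$. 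Since nothing was assumed about the size of $\tau_n$, the bound is unconditional.

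As for the main obstacle: the argument is essentially mechanical once the two cited lemmas are in hand, so there is no genuine difficulty; the one point demanding care is the algebraic rearrangement, which must be arranged so that exactly the same factor $a=1+\tau_n\kappa$ multiplies $\|\hat{\phi}^{n}\|_{\infty}$ on the left (via Lemma~\ref{lem:MBP_left}) and $\beta$ on the right (via Lemma~\ref{lem:MBP_right}), so that it cancels cleanly. Relatedly, one must observe that the structural hypothesis of Lemma~\ref{lem:MBP_left} is stable under the positive scaling $\Delta_h\mapsto\tau_n\varepsilon^2\Delta_h$, which is immediate but worth stating explicitly.
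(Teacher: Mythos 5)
Your proof is correct and is exactly the standard argument: the paper states this lemma without proof (citing Tang--Yang and Shen--Tang--Yang), and your derivation --- rewriting the scheme as $\big((1+\tau_n\kappa)I-\tau_n\varepsilon^2\Delta_h\big)\hat{\phi}^{n}=\phi^{n-1}+\tau_n\big(f(\phi^{n-1})+\kappa\phi^{n-1}\big)$ and chaining Lemma~\ref{lem:MBP_left} with Lemma~\ref{lem:MBP_right} so the factor $1+\tau_n\kappa$ cancels --- is precisely the intended route, being the same combination of tools the paper itself uses in the proof of Theorem~\ref{thm:MBP_2}.
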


%%%%%%%%%%%%%%%%%%%%%%%%%%%%%%%%%%%%%%%%%%%
\subsection{Approximation error of $ V ( g_h(\hat{\phi}^{n}, R^{n-1}) ) -1$}
%%%%%%%%%%%%%%%%%%%%%%%%%%%%%%%%%%%%%%%%%%%%

It is obvious that the numerical approximations to $ R(t) $ in \eqref{sch:2_1} is only first-order accurate in time. One may guess that  the BDF2-sESAV-I scheme for the evaluation of $\phi$ is also only of first-order accurate.  In fact, we can find from \eqref{sch:2_1} that the SAV $ R $ affect the phase function $\phi$ by terms $ V( g_h(\hat{\phi}^{n}, R^{n-1}) ) f(\hat{\phi}^{n}) $ and 
$$
\kappa \big( \phi^{n} - V ( g_h(\hat{\phi}^{n}, R^{n-1}) ) \hat{\phi}^{n} \big) = \kappa \big( \phi^{n} - \hat{\phi}^{n} \big) + \kappa \big( 1 - V ( g_h(\hat{\phi}^{n}, R^{n-1}) ) \big) \hat{\phi}^{n},
$$
which imply that the error of $\phi$ is affected by $ R $ only through the term $ 1 - V ( g_h(\hat{\phi}^{n}, R^{n-1}) ) $. Below, we  show that a first-order approximation to the auxiliary variable does not affect the second-order temporal accuracy of $\phi$. 

\begin{lemma}\label{lem:estig} 
	Assume $ R^{n} \leq \mathcal{M} $ with a positive constant $ \mathcal{M} > 0 $ and $ \| \phi^{n} \|_{\infty} \leq \beta $ hold for all $ n \geq 0 $. Denote $ \varsigma_n := \| \phi(t_{n}) - \hat{\phi}^{n} \| + \vert R( t_{n-1} ) - R^{n-1} \vert + \tau_{n} + h^2 $. Then, under assumption (\textbf{A1}) we have
	\begin{equation*}
			\big\vert V( g_h( \hat{\phi}^n, R^{n-1} ) ) - 1 \big\vert \leq C \min \left\{ \varsigma_n,  \varsigma_n ^2  \right\}.
	\end{equation*}
\end{lemma}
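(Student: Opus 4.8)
The plan is to reduce the claim to a quantitative estimate of how far the argument $z := g_h(\hat{\phi}^n, R^{n-1})$ lies from $1$, and then to exploit the behaviour of $V$ near $1$ forced by (\textbf{A1}). First I would dispose of the easy regime: by (\textbf{A2}) we always have $0 \le V(\cdot) \le 1$, so $|V(z) - 1| \le 1$; hence if $\varsigma_n \ge 1$ the bound holds trivially with $C=1$, since then $\min\{\varsigma_n, \varsigma_n^2\} = \varsigma_n \ge 1$. Thus it suffices to argue under the extra assumption $\varsigma_n < 1$, in which case $\min\{\varsigma_n, \varsigma_n^2\} = \varsigma_n^2$ and every summand in $\varsigma_n$ is at most $1$.

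Second, I would write $z - 1 = \exp\{\eta_n\} - 1$ with $\eta_n := R^{n-1} - E_{1h}[\hat{\phi}^n]$, and estimate $\eta_n$ by the telescoping decomposition
\[
\eta_n = \big(R^{n-1} - R(t_{n-1})\big) + \big(E_1[\phi(t_{n-1})] - E_1[\phi(t_n)]\big) + \big(E_1[\phi(t_n)] - E_{1h}[\phi(t_n)]\big) + \big(E_{1h}[\phi(t_n)] - E_{1h}[\hat{\phi}^n]\big),
\]
using the identity $R(t_{n-1}) = E_1[\phi(t_{n-1})]$. The four terms I would bound, respectively, by: $|R(t_{n-1}) - R^{n-1}| \le \varsigma_n$, directly; $C\tau_n \le C\varsigma_n$, since $\frac{d}{dt}E_1[\phi] = -\int_\Omega f(\phi)\phi_t$ is uniformly bounded under the standard regularity and MBP assumptions on the exact solution; $Ch^2 \le C\varsigma_n$, the quadrature error of the discrete integral $E_{1h}$ against $E_1$ for the smooth periodic integrand $F(\phi(\cdot,t_n))$; and $C\|\phi(t_n) - \hat{\phi}^n\| \le C\varsigma_n$, using that $F$ is Lipschitz on $[-\beta,\beta]$ together with $\|\phi(t_n)\|_\infty \le \beta$ and $\|\hat{\phi}^n\|_\infty \le \beta$ — the latter following from $\|\phi^{n-1}\|_\infty \le \beta$, the choice $\kappa \ge \|f'\|_{C[-\beta,\beta]}$, and Lemma \ref{lem:MBP_BDF1} — plus Cauchy--Schwarz on the bounded domain. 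This yields $|\eta_n| \le C\varsigma_n \le C$, and then, from $|e^x - 1| \le e^{|x|}|x|$,
\[
\big| g_h(\hat{\phi}^n, R^{n-1}) - 1 \big| = \big| e^{\eta_n} - 1 \big| \le e^{C}|\eta_n| \le C\varsigma_n .
\]

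Third, I would invoke (\textbf{A1}). Since $V \in C^1(\mathbb{R}) \cap W^{2,\infty}(\mathbb{R})$, the derivative $V'$ is globally Lipschitz; combined with $V'(1) = 0$ this gives, writing $V(z) - 1 = V(z) - V(1) = \int_1^z V'(s)\,ds$ and using $|V'(s)| = |V'(s) - V'(1)| \le \|V''\|_{L^\infty}|s-1|$, the quadratic bound $|V(z) - 1| \le \tfrac12 \|V''\|_{L^\infty}(z-1)^2$. Applying this with $z = g_h(\hat{\phi}^n, R^{n-1})$ and inserting $|z-1| \le C\varsigma_n$ gives $\big|V(g_h(\hat{\phi}^n, R^{n-1})) - 1\big| \le C\varsigma_n^2 = C\min\{\varsigma_n, \varsigma_n^2\}$, as claimed.

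The main obstacle is really the second step: correctly isolating the exact identity $R(t_{n-1}) = E_1[\phi(t_{n-1})]$, matching the continuous-level quadrature error $E_1[\phi(t_n)] - E_{1h}[\phi(t_n)]$ with the $h^2$ term of $\varsigma_n$ (which is where spatial smoothness of $F(\phi(\cdot,t_n))$ enters), and ensuring the Lipschitz estimate for $F$ uses the constant on $[-\beta,\beta]$ so the discrete MBP bound $\|\hat{\phi}^n\|_\infty \le \beta$ can be brought to bear; everything downstream is a one-line Taylor argument whose entire force comes from the condition $V'(1) = 0$ in (\textbf{A1}), while (\textbf{A2}) is used only to make the statement meaningful when $\varsigma_n$ is not small.
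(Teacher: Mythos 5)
Your proof is correct and follows essentially the same route as the paper: a four-term telescoping estimate giving $|g_h(\hat{\phi}^n,R^{n-1})-1|\le C\varsigma_n$ (you telescope the exponent $R^{n-1}-E_{1h}[\hat{\phi}^n]$ where the paper telescopes the quotient $g_h$ itself, but the four pieces --- the $R$-error, the time increment of $E_1$, the quadrature error, and the $\phi$-versus-$\hat{\phi}^n$ difference --- are identical), followed by the Taylor argument at $z=1$ that converts $V'(1)=0$ and $V''\in L^{\infty}$ into the quadratic bound. The only cosmetic deviation is that you obtain the $\min$ via a case split on $\varsigma_n\ge 1$ using (\textbf{A2}), whereas the paper, whose lemma assumes only (\textbf{A1}), derives the linear bound from $|V'|\le K$ via the first-order integral remainder; your step is trivially repaired the same way.
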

\begin{proof} Due to assumption (\textbf{A1}), a direct application of Taylor expansion gives us
	\begin{equation*}
		\begin{aligned}
			V( g_h( \hat{\phi}^n, R^{n-1} ) ) -1 & = \int_{1}^{ g_h( \hat{\phi}^n, R^{n-1} ) } V^{\prime} ( \theta ) d\theta, \\  
			V( g_h( \hat{\phi}^n, R^{n-1} ) )-1 & = \int_{1}^{ g_h( \hat{\phi}^n, R^{n-1} ) } \big(  g_h( \hat{\phi}^n, R^{n-1} )-\theta \big) V^{\prime\prime}  ( \theta ) d\theta,
		\end{aligned}
	\end{equation*}
	where $ V(1) = 1 $ and $ V^{\prime}(1) = 0 $ have been applied.  Thus, to derive the conclusion, we have to estimate $\big\vert g_h( \hat{\phi}^n, R^{n-1} ) - 1 \big\vert$ below.
It follows from the triangle inequality that
\begin{equation}\label{estig_1}
	\begin{aligned}
		&  \big\vert g_h( \hat{\phi}^n, R^{n-1} ) - 1 \big\vert \\
		& \quad \leq \big\vert g_h( \hat{\phi}^n, R^{n-1} ) - g_h( \hat{\phi}^n, R( t_{n-1} ) ) \big\vert 
		+ \big\vert g_h( \hat{\phi}^n, R( t_{n-1} ) ) - g_h( \phi( t_{n} ), R( t_{n-1} ) ) \big\vert \\
		& \quad \quad + \big\vert  g_h( \phi( t_{n} ), R( t_{n-1} ) )  - g_h( \phi( t_{n} ), R( t_{n} ) ) \big\vert + \big\vert g_h( \phi( t_{n} ), R( t_{n} ) ) - 1 \big\vert \\ 
		& \quad =: S_{1}+S_{2}+S_{3} + S_{4},
	\end{aligned}
\end{equation}
in which, by the boundedness of $ R^{n} $ and $ \phi^{n} $, Lemma \ref{lem:MBP_BDF1}, and Cauchy-Schwarz inequality, we see
\begin{equation*}
		S_{1}  = \frac{ 1 }{ \exp \{E_{1 h}[ \hat{\phi}^{n} ] \} } \big\vert \exp \{ R( t_{n-1} ) \} - \exp \{ R^{n-1} \}  \big\vert  \leq C\vert R( t_{n-1} ) - R^{n-1} \vert,
\end{equation*}
%and 
\begin{equation*}
	\begin{aligned}
		S_{2} &   = \frac{ \exp \{ R( t_{n-1} ) \} }{ \exp \{E_{1 h}[ \phi( t_n ) ] \} \exp \{E_{1 h}[ \hat{\phi}^{n} ] \} } \big\vert \exp \{E_{1 h}[ \phi( t_n ) ] \} -  \exp \{E_{1 h}[ \hat{\phi}^{n} ] \} \big\vert \\
		& \leq \frac{ \exp \{ R( t_{n-1} ) \} \vert \Omega \vert^{1/2} }{ 
			\min\{ \exp \{E_{1 h}[ \phi( t_n ) ] \}, \exp \{E_{1 h}[ \hat{\phi}^{n} ] \} \} } \| F( \phi( t_{n} ) ) - F( \hat{\phi}^{n} ) \| \leq C \| \phi(t_{n}) - \hat{\phi}^{n}  \|.
	\end{aligned}
\end{equation*}
From the second equation of \eqref{sav_1}, we have $ \vert R_t \vert = \vert (f(\phi), \phi_t) \vert \leq C (\| f( \phi ) \|^2 + \| \phi_t \|^2) $, which together with the mean value theorem gives
$
		S_{3} \leq C \big\vert R( t_{n} ) - R( t_{n-1} ) \big\vert   =\mo(\tau_{n}).
$
Finally, by  definition  we have
\begin{equation*}
		S_{4}  = \frac{ \big\vert \exp \{ R( t_{n} ) \} - \exp \{ E_{1 h}[ \phi( t_{n} ) \} ] \big\vert }{ \exp \{E_{1 h}[ \phi( t_n ) ] \} }   \leq C \big\vert E_{1} [ \phi(t_n) ] - E_{1h} [ \phi(t_n) ] \big\vert =\mo( h^2).
\end{equation*}
Therefore, inserting the above estimates into \eqref{estig_1} we obtain
	\begin{equation*}
		\big\vert g_h( \hat{\phi}^n, R^{n-1} ) - 1 \big\vert  \leq C \varsigma_n,
\end{equation*}
which further implies that
$$
\vert V( g_h( \hat{\phi}^n, R^{n-1} ) ) - 1 \vert  \leq C \varsigma_{n}, \quad  \vert V( g_h( \hat{\phi}^n, R^{n-1} ) ) - 1 \vert  \leq C \varsigma_{n}^2.
$$
Thus, the conclusion is proved.
\end{proof}

\begin{remark}
Note that the proof of Lemma \ref{lem:estig}  requires that $ R^{n} $ and $ \| \phi^{n} \|_{\infty} $ to be bounded by $ \mathcal{M} $ and $ \beta $, respectively. Actually, they can be deduced by the discrete MBP (cf. Theorem \ref{thm:MBP_2}) and energy dissipation law (cf. Theorem \ref{thm:energy_2}) in the next section. Fig. \ref{figAV} shows the approximation of $ g_h(\hat{\phi}, R) $ and $ V ( g_h(\hat{\phi}, R) ) $ yielded by the BDF2-sESAV-I scheme to $1$ in subsection \ref{sec:ex2}, from which we can observe that the use of auxiliary functional $ V( \cdot ) $ defined in subsection \ref{subsec:AuxF} can indeed improve the accuracy.
\begin{figure}[!htbp]
	\vspace{-12pt}
	\centering
	% 非线性算法的结果
	\subfigure[double-well potential with fixed $\tau = 0.04$]
	{
		\includegraphics[width=0.45\textwidth]{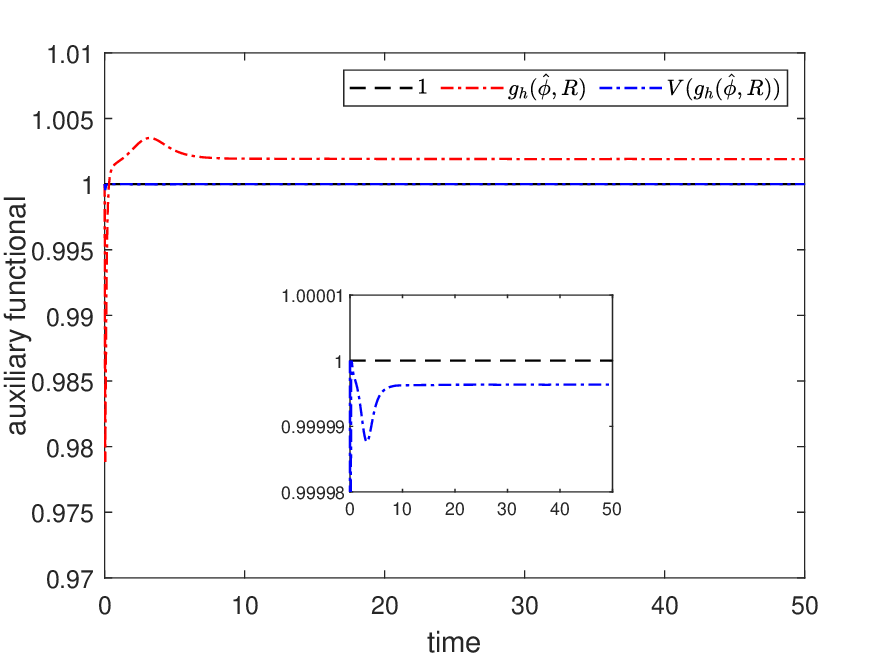}		\label{figAV_dw}
	}%
	\subfigure[Flory--Huggins potential with fixed $\tau = 0.01$]
	{
		\includegraphics[width=0.45\textwidth]{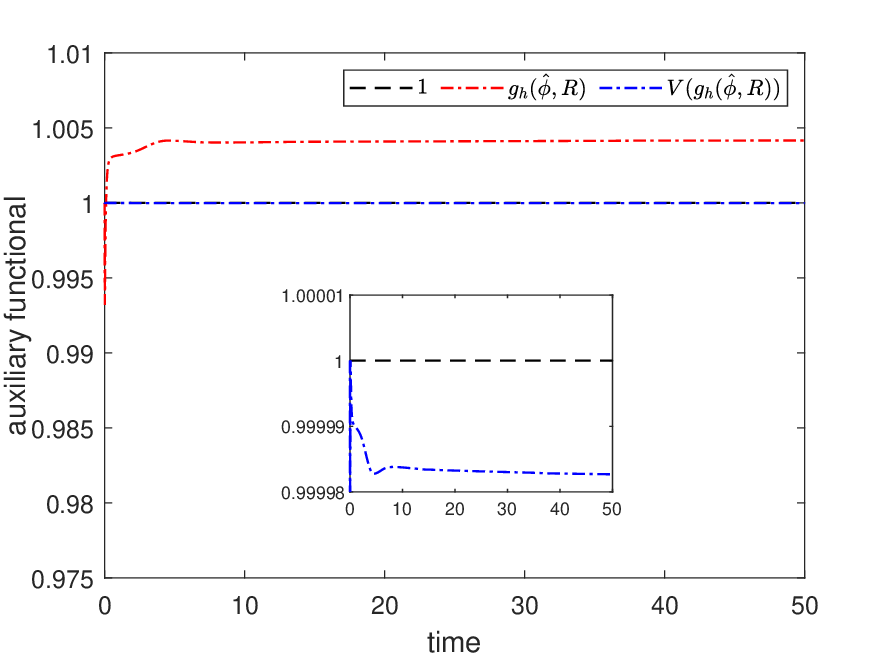}	\label{figAV_fh}
	}%
	%	\vspace{-10pt}
	\setlength{\abovecaptionskip}{0.0cm} 
	\setlength{\belowcaptionskip}{0.0cm}
	\caption{Approximations of $ g_h(\hat{\phi}, R) $ and $ V ( g_h(\hat{\phi}, R) ) $ to $1$}	
	\label{figAV}
\end{figure}
\end{remark}

%%%%%%%%%%%%%%%%%%%%%%%%%%%%%%%%%%%%%%%%%%
\section{Discrete energy dissipation law and MBP}\label{sec:ED_MBP}
%%%%%%%%%%%%%%%%%%%%%%%%%%%%%%%%%%%%%%%%%%
In this section, we investigate the discrete energy dissipation law and MBP of the proposed BDF2-sESAV-I scheme by using a kernel recombination technique.

%%%%%%%%%%%%%%%%%%%%%%%%%%%%%%%%%%%%%
\subsection{Discrete maximum bound principle}\label{subsec:mbp}
%%%%%%%%%%%%%%%%%%%%%%%%%%%%%%%%%%%%%
One of the main difficulties in analyzing the discrete MBP and error estimate of the proposed scheme is related to the fact that the coefficients of the variable-step BDF2 formula is non-monotonic and may change sign. We are devoted to address the issue by using a kernel recombination technique. Define a new variable $ \psi $ as
\begin{equation*}\label{def:new_ker}
	\psi^{0} := \phi^0 \quad \text{and} \quad \psi^{k} := \phi^{k} - \eta \phi^{k-1} \quad   \text{  for  } \  k \geq 1,
\end{equation*}
where $\eta$ is a real parameter to be determined. It is easy to derive that
	\begin{equation}\label{new_ker:f1}
		\phi^{k} = \sum_{\ell=0}^{k} \eta^{k-\ell} \psi^\ell, \quad \nabla_\tau \phi^{k} = \sum_{\ell=1}^k \eta^{k-\ell} \nabla_\tau \psi^{\ell} + \eta^{k} \phi^0 \quad   \text{  for  } \  k \geq 1.
\end{equation}

%{\color{red}
%$$
%\begin{aligned}
%\md_{2} \phi^n = \sum^{n}_{k=1} b^{(n)}_{n-k} \nabla_{\tau} \phi^{k} & = \sum^{n}_{k=1} b^{(n)}_{n-k} \left( \sum_{\ell=1}^k \eta^{k-\ell} \nabla_\tau \psi^{\ell} + \eta^{k} \right) \\
%& = \sum_{\ell=1}^n \nabla_\tau \psi^{\ell} \sum^{n}_{k=\ell} b^{(n)}_{n-k} \eta^{k-\ell} + \psi^{0} \sum^{n}_{k=1} b^{(n)}_{n-k} \eta^{k}.
%\end{aligned}
%$$	
%It implies that for $ n = 1 $, we have
%$$
%\md_{2} \phi^1 = \nabla_\tau \psi^{1} b^{(1)}_{0} + \psi^{0} b^{(1)}_{0} \eta.
%$$
%}

Inserting \eqref{new_ker:f1} into \eqref{Formula:BDF2} and exchanging the summation order, we get the following  updated variable-step BDF2 formula
\begin{equation}\label{Formula:new_BDF2}
	\md_{2} \phi^n = \sum^{n}_{k=1} d^{(n)}_{n-k} \nabla_{\tau} \psi^{k} + d^{(n)}_{n} \psi^0 = d_0^{(n)} \psi^{n} - \sum^{n-1}_{k=0} ( d^{(n)}_{n-k-1} - d^{(n)}_{n-k} ) \psi^{k} \quad \text{for} \  n \geq 1,
\end{equation}
where the new discrete convolution kernels $ \{d^{(n)}_{n-k}\} $ are defined as follows: $ d_0^{(n)} := b_0^{(n)} $ and
\begin{equation}\label{Formula:new_ker_1}
	 d_{1}^{(1)} := \eta d_{0}^{(1)}, \quad  d_k^{(n)} := \eta^{k-1} (b_0^{(n)} \eta + b_1^{(n)} ) \quad \text{for} \  1 \leq k \leq n \  \text{with} \   n \geq 2,
\end{equation}
which gives
\begin{equation}\label{Formula:new_ker_2}
	 d_{k}^{(n)} = \eta d_{k-1}^{(n)} \quad  \text{for} \  1 \leq k \leq n.
\end{equation}
To make $ d^{(n)}_{n-k} $ non-negative and decreasing, i.e., $d_0^{(n)} \geq d_1^{(n)} \geq \cdots \geq d_n^{(n)} \geq 0$, we need to require $ \eta $ to satisfy that
\begin{equation}\label{Formula:eta}
	0 < \frac{r_k^2}{1+2 r_k} = - \frac{ b^{(n)}_{1} }{ b^{(n)}_0 } < \eta < 1 \quad \text { for } k \geq 2,
\end{equation}
which can be achieved by setting $ 0 < r_k < 1 + \sqrt{2} $.

Due to \eqref{Formula:new_BDF2} and $ \psi^n = \phi^n - \eta \phi^{n-1} $, the first equation of \eqref{sch:2_1} also reads as
\begin{equation}\label{MBP:f1}
		\mg_h^{n} \phi^{n}  = \eta d^{(n)}_{0} \phi^{n-1} + \sum^{n-1}_{k=0} ( d^{(n)}_{n-k-1} - d^{(n)}_{n-k} ) \psi^{k} +  V( g_h(\hat{\phi}^{n}, R^{n-1}) )\, ( f(\hat{\phi}^{n}) + \kappa \hat{\phi}^{n} ) ,
\end{equation}
where $ \mg_h^{n} :=( d^{(n)}_{0} + \kappa ) I - \varepsilon^2 \Delta_h $, which will be used to evaluate $ \phi^n $ through the information of $ \phi^{n-1}$, $ \hat{\phi}^{n} $ and $ \{ \psi^{k} \}_{k=0}^{n-1} $. 
To evaluate $ \psi^n $, we substitute the updated formula \eqref{Formula:new_BDF2} and \eqref{new_ker:f1} into the first equation of \eqref{sch:2_1}  to derive
\begin{equation}\label{MBP:f2}
		\mg_h^{n} \psi^{n} 
		 = \sum^{n-1}_{k=0} Q^{n}_{n-k} \psi^{k} + V( g_h(\hat{\phi}^{n}, R^{n-1}))\, ( f(\hat{\phi}^{n}) + \kappa \hat{\phi}^{n} ),
\end{equation}
where
\begin{equation}\label{def:Q}
	Q^{n}_{k} := ( d^{(n)}_{k-1} - d^{(n)}_{k} - \kappa \eta^{k} )I + \eta^{k} \varepsilon^2 \Delta_h.
\end{equation}

\begin{lemma}\label{lem:boundQ} 
	Assume that the stepsize ratio $ 0 < r_k < 1 + \sqrt{2} $ for $2 \leq k \leq n$ holds, and the temporal stepsize satisfies
	\begin{equation}\label{Condition:tau1}
		\tau_n \leq \frac{\left(1+2 r_n\right) \eta-r_n^2}{\eta^2\left(1+r_n\right)} \frac{1-\eta}{ \kappa + 4 \varepsilon^2 h^{-2}} \quad {\rm for } \  n \geq 1.
	\end{equation}
	Then, for any constant $ 0 \leq \lambda \leq 1 $, we have
	\begin{equation}\label{boundQ_tilde}
		\big\| ( d^{(n)}_{k-1} - d^{(n)}_{k} - \kappa \lambda \eta^{k} )I + \eta^{k} \varepsilon^2 \Delta_h \big\|_{\infty} \leq d_{k-1}^{(n)}-d_k^{(n)} - \kappa \lambda \eta^k \quad {\rm for} \  1 \leq k \leq n.
	\end{equation}
	Specifically, if $ \lambda = 1 $, the matrix $Q_k^{n}$ defined in \eqref{def:Q} fulfills
	\begin{equation}\label{boundQ}
		\left\|Q_k^{n}\right\|_{\infty} \leq d_{k-1}^{(n)}-d_k^{(n)} - \kappa \eta^k \quad {\rm for } \  1 \leq k \leq n .
	\end{equation}
\end{lemma}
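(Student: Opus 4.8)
The plan is to prove \eqref{boundQ_tilde} first and then obtain \eqref{boundQ} as the special case $\lambda=1$. The key structural observations are: (i) the matrix $\eta^{k}\varepsilon^2\Delta_h$ has entries whose diagonal part equals $-\eta^{k}\cdot 4\varepsilon^2 h^{-2}$ and whose off-diagonal row sums equal $\eta^{k}\cdot 4\varepsilon^2 h^{-2}$; (ii) under the stepsize ratio condition $0<r_k<1+\sqrt2$ we have $0<\eta<1$ by \eqref{Formula:eta}, so $\eta^k>0$ and the ``new'' kernels are decreasing, hence $d_{k-1}^{(n)}-d_k^{(n)}\ge 0$. I would start by writing out $d_{k-1}^{(n)}-d_k^{(n)}=(1-\eta)d_{k-1}^{(n)}$ using \eqref{Formula:new_ker_2}, and then computing $d_{k-1}^{(n)}$ explicitly via \eqref{Formula:new_ker_1}: for $n\ge2$, $d_{k-1}^{(n)}=\eta^{k-2}(b_0^{(n)}\eta+b_1^{(n)})$ (with the obvious adjustment at $k=1$ and for $n=1$), so that $d_{k-1}^{(n)}-d_k^{(n)}=\eta^{k-2}(1-\eta)(b_0^{(n)}\eta+b_1^{(n)})$. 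Substituting the definitions \eqref{Def:DCK} of $b_0^{(n)},b_1^{(n)}$ gives $d_{k-1}^{(n)}-d_k^{(n)}=\eta^{k-2}(1-\eta)\frac{(1+2r_n)\eta-r_n^2}{\tau_n(1+r_n)}$.

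Next I would estimate the $\infty$-norm of the matrix on the left of \eqref{boundQ_tilde}. Since $\Delta_h$ is symmetric with the prescribed diagonal/off-diagonal sign pattern, for any scalar $c$ the matrix $cI+\eta^k\varepsilon^2\Delta_h$ has row-sum-of-absolute-values bounded by $|c-4\eta^k\varepsilon^2 h^{-2}|+4\eta^k\varepsilon^2 h^{-2}$ — provided $c-4\eta^k\varepsilon^2 h^{-2}$ has a definite sign we can drop the outer absolute value, but in general the triangle inequality gives $\|cI+\eta^k\varepsilon^2\Delta_h\|_\infty\le |c| + \|\eta^k\varepsilon^2\Delta_h\|_\infty=|c|+8\eta^k\varepsilon^2 h^{-2}$; however the sharper route is to note that when $c\ge 0$ (which we will verify), the diagonal entry $c-4\eta^k\varepsilon^2 h^{-2}$ added to the off-diagonal sum $4\eta^k\varepsilon^2 h^{-2}$ yields a row sum of absolute values at most $c+8\eta^k\varepsilon^2 h^{-2}$ only if $c-4\eta^k\varepsilon^2h^{-2}<0$, while if $c-4\eta^k\varepsilon^2h^{-2}\ge0$ the row sum is exactly $c$. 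In either case $\|cI+\eta^k\varepsilon^2\Delta_h\|_\infty\le \max\{c,\,8\eta^k\varepsilon^2 h^{-2}-c\}\le c+8\eta^k\varepsilon^2h^{-2}$, but the cleaner bound I actually want is $\|cI+\eta^k\varepsilon^2\Delta_h\|_\infty\le c$ whenever $c\ge 8\eta^k\varepsilon^2h^{-2}$ is false — so I will instead just bound by $|c|+8\eta^k\varepsilon^2h^{-2}$ and then show the right-hand side of \eqref{boundQ_tilde} dominates. Concretely, with $c=d_{k-1}^{(n)}-d_k^{(n)}-\kappa\lambda\eta^k$, I need $|c|+8\eta^k\varepsilon^2h^{-2}\le d_{k-1}^{(n)}-d_k^{(n)}-\kappa\lambda\eta^k$; this forces $c\ge 0$ and reduces to $\kappa\lambda\eta^k+8\eta^k\varepsilon^2h^{-2}\le d_{k-1}^{(n)}-d_k^{(n)}$, i.e. after dividing by $\eta^{k}$ (and using $\lambda\le1$, $\eta<1$), it suffices that $\eta(\kappa+8\varepsilon^2h^{-2})\le \eta^{-1}(1-\eta)\frac{(1+2r_n)\eta-r_n^2}{\tau_n(1+r_n)}$. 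Rearranging yields exactly the stepsize restriction $\tau_n\le \frac{(1+2r_n)\eta-r_n^2}{\eta^2(1+r_n)}\cdot\frac{1-\eta}{\kappa+8\varepsilon^2h^{-2}}$; I should double-check against \eqref{Condition:tau1} whether the paper's denominator is $\kappa+4\varepsilon^2h^{-2}$ or $\kappa+8\varepsilon^2h^{-2}$, and if it is $4\varepsilon^2h^{-2}$ then the sharper row-sum argument (using that only $-4\eta^k\varepsilon^2h^{-2}$ sits on the diagonal and $+4\eta^k\varepsilon^2h^{-2}$ off-diagonal, so for $c\ge0$ large enough the contribution of $\Delta_h$ to the row sum is only the off-diagonal $4\eta^k\varepsilon^2h^{-2}$) must be used, which I would carry out by splitting into the two sign cases for $c-4\eta^k\varepsilon^2h^{-2}$.

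Finally, for the special case I would observe that \eqref{def:Q} is precisely the $\lambda=1$ instance of the matrix in \eqref{boundQ_tilde}, so \eqref{boundQ} follows immediately. Throughout I would also verify the positivity $(1+2r_n)\eta-r_n^2>0$ needed to make the stepsize bound \eqref{Condition:tau1} meaningful and to conclude $d_{k-1}^{(n)}-d_k^{(n)}\ge0$: this is exactly the left inequality in \eqref{Formula:eta}, guaranteed by $r_n<1+\sqrt2$.

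I expect the main obstacle to be the sharp estimate of $\|cI+\eta^k\varepsilon^2\Delta_h\|_\infty$ — specifically getting the constant in front of $\varepsilon^2h^{-2}$ to match \eqref{Condition:tau1} exactly. The naive triangle inequality gives $8\varepsilon^2h^{-2}$, whereas the condition as stated has $4\varepsilon^2h^{-2}$; closing that factor-of-two gap requires exploiting the specific sign structure of $\Delta_h$ (all off-diagonal entries of one sign, diagonal of the other, with equal magnitudes of the two groups) together with the fact that $c=d_{k-1}^{(n)}-d_k^{(n)}-\kappa\lambda\eta^k\ge0$, so that in computing the absolute row sum the diagonal entry $c-4\eta^k\varepsilon^2h^{-2}$ and the positive off-diagonal block partially cancel; a careful case distinction on the sign of $c-4\eta^k\varepsilon^2h^{-2}$ then delivers $\|cI+\eta^k\varepsilon^2\Delta_h\|_\infty\le\max\{c,\,4\eta^k\varepsilon^2h^{-2}-c\}$, and the stepsize condition \eqref{Condition:tau1} is exactly what guarantees $c\ge 4\eta^k\varepsilon^2h^{-2}-c$, i.e. $c$ itself is the max, yielding \eqref{boundQ_tilde}. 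Everything else is bookkeeping with \eqref{Def:DCK}, \eqref{Formula:new_ker_1}–\eqref{Formula:new_ker_2} and \eqref{Formula:eta}.
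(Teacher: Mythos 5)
Your overall strategy is sound and, in substance, it reconstructs the argument that the paper does not spell out: the paper's own ``proof'' of this lemma is a one-line reduction to \cite[Lemma 4.1]{SINUM_2020_Liao}, taking $\kappa\lambda$ there in place of the stabilization constant and observing that $\lambda\le 1$ lets the condition with $\kappa\lambda$ be absorbed into \eqref{Condition:tau1}. So your self-contained row-sum computation is a genuinely more explicit route than the paper's citation, and the key structural point you isolate is the right one: writing $c:=d_{k-1}^{(n)}-d_k^{(n)}-\kappa\lambda\eta^k$, the matrix $cI+\eta^k\varepsilon^2\Delta_h$ has absolute row sum $\vert c-4\eta^k\varepsilon^2h^{-2}\vert+4\eta^k\varepsilon^2h^{-2}$, which collapses to exactly $c$ once the diagonal entry is nonnegative, and the stepsize restriction \eqref{Condition:tau1} is precisely what guarantees $c\ge 4\eta^k\varepsilon^2h^{-2}$ (for $k\ge 2$ this follows from $(1-\eta)\eta^{k-2}\frac{(1+2r_n)\eta-r_n^2}{\tau_n(1+r_n)}\ge\eta^k(\kappa\lambda+4\varepsilon^2h^{-2})$ after dividing by $\eta^{k-2}$, and monotonicity in $\lambda$ handles all $0\le\lambda\le 1$ at once).

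Two points need tightening. First, your closing formula $\Vert cI+\eta^k\varepsilon^2\Delta_h\Vert_\infty\le\max\{c,\,4\eta^k\varepsilon^2h^{-2}-c\}$ is an arithmetic slip: when $0\le c<4\eta^k\varepsilon^2h^{-2}$ the absolute row sum is $8\eta^k\varepsilon^2h^{-2}-c$, not $4\eta^k\varepsilon^2h^{-2}-c$, so the correct dichotomy is $\max\{c,\,8\eta^k\varepsilon^2h^{-2}-c\}$ and the condition for $c$ to win is $c\ge 4\eta^k\varepsilon^2h^{-2}$, not $c\ge 2\eta^k\varepsilon^2h^{-2}$; as it happens \eqref{Condition:tau1} delivers the stronger requirement, so the conclusion survives, but the inequality as you wrote it is false in the intermediate range. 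Second, the identity $d_{k-1}^{(n)}-d_k^{(n)}=\eta^{k-2}(1-\eta)\bigl(b_0^{(n)}\eta+b_1^{(n)}\bigr)$ holds only for $k\ge 2$; for $k=1$ one has $d_0^{(n)}-d_1^{(n)}=(1-\eta)b_0^{(n)}-b_1^{(n)}$, which is larger (since $b_1^{(n)}<0$), and the case $n=1$ with $r_1=0$ reduces \eqref{Condition:tau1} to $\tau_1\le(1-\eta)/\bigl(\eta(\kappa+4\varepsilon^2h^{-2})\bigr)$, which is exactly what the $k=1$ row-sum bound requires. Your parenthetical ``obvious adjustment'' covers this, but it should be checked explicitly since $k=1$ does not follow formally from the $k\ge 2$ computation.
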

\begin{proof}
Note that \eqref{boundQ} is an apparent consequent of \eqref{boundQ_tilde}. For $ 0 \leq \lambda \leq 1 $, by considering $ \kappa \lambda $ as $ S_n $ in \cite[Lemma 4.1]{SINUM_2020_Liao} and following the same proof, we can directly give the estimation \eqref{boundQ_tilde}
in the maximum-norm under the following condition
\begin{equation}
\tau_n \leq \frac{\left(1+2 r_n\right) \eta-r_n^2}{\eta^2\left(1+r_n\right)} \frac{1-\eta}{ \kappa \lambda + 4 \varepsilon^2 h^{-2}},
\end{equation}
which further implies \eqref{Condition:tau1}.
\end{proof}

\begin{remark}\label{rem:eta} 
	Note that the restriction \eqref{Condition:tau1} on temporal stepsize is consistent with the previous conclusions yielded by the fully-implicit scheme \cite{SINUM_2020_Liao} and the implicit-explicit scheme \cite{MOC_2023_Ju}, which does not offer an explicit guideline for identifying the feasible range of the temporal stepsize. By using \eqref{Formula:eta}, it is easy to check that the right side of  \eqref{Condition:tau1} is strictly positive. For a fixed maximum stepsize ratio $ r_{\max} \in [1, 1 + \sqrt{2}) $, it is recommended by \cite{SINUM_2020_Liao} that one can choose the recombined parameter 
		$\eta_{*} := \frac{2 r_{\max}^2}{\left(1+r_{\max}\right)^2}$
	to relieve the restriction  \eqref{Condition:tau1}.
	In what follows, by default $ \eta = \eta_{*} $ is always set. Moreover, we set
		$$
		\mathcal{K}( s ) := \frac{1-\eta_{*}}{ \eta_{*}^2 } \frac{\left(1+2 s\right) \eta_{*}-s^2}{ 1+s }, \quad s \in ( 0, r_{\max} ).
		$$
	It can be verified that $ \mathcal{K}( s ) $ is increasing in $ ( 0, \sqrt{ 1 + \eta_{*} } - 1 ) $ and decreasing in $ ( \sqrt{ 1 + \eta_{*} } - 1, r_{\max} ) $; see \cite{MOC_2023_Ju} for more details. Moreover, since
	$$
	\mathcal{K}( 0 ) = \frac{ 1 - \eta_{*} }{ \eta_{*} } > \frac{ ( 1 - \eta_{*} ) ( 3 \eta_{*} - 1 ) }{ 2 \eta_{*}^{2} } = \mathcal{K}( 1 ) \geq \mathcal{K}( r_{\max} ),
	$$
	we have $ \mathcal{K}( r_{\max} ) \leq \mathcal{K}( r_{n} ) $ for all $ r_{n} \in ( 0, r_{\max} ) $. Thus, in practice, the time-step condition \eqref{Condition:tau1} can read as $
	\tau_n \leq \frac{ \mathcal{K}( r_{\max} ) }{ \kappa + 4 \varepsilon^2 h^{-2}}$.
	For instance, in the case of uniform meshes with $ r_{n} \equiv 1 $, one can take the recombined parameter $ \eta^{*} = \frac{1}{2} $, and the condition \eqref{Condition:tau1} now reduces to $ \tau_{n} = \tau \leq \frac{ 1 }{ 2 ( \kappa + 4 \varepsilon^2 h^{-2} ) } $.
\end{remark}

\begin{remark}\label{rem:Neumann_3D} 
	For the three-dimensional case, the discrete Laplace operator $\Delta_h$ is defined as
	$$
	\Delta_h v_{i j k}=\frac{1}{h^2}\left(v_{i+1, j, k}+v_{i-1, j, k}+v_{i, j+1, k}+v_{i, j-1, k} + v_{i, j, k+1} + v_{i, j, k-1} - 6 v_{i j k}\right)
	$$
	for $ 1 \leq i, j, k \leq M $. To ensure the non-negativity of the diagonal elements of the matrix $ Q^{n}_{k} $, i.e.,
	$$
	d^{(n)}_{k-1} - d^{(n)}_{k} - \kappa \eta^{k} \geq \frac{ 6 \eta^{k} \varepsilon^2 }{ h^{2} },
	$$
	the following time-step condition 
	$$
	\tau_n \leq \frac{\left(1+2 r_n\right) \eta-r_n^2}{\eta^2\left(1+r_n\right)} \frac{1-\eta}{ \kappa + 6 \varepsilon^2 h^{-2}}
	$$
	is required. In this sense, the conclusion \eqref{boundQ} can be established following the same argument as \cite[Lemma 4.1]{SINUM_2020_Liao}. Furthermore, similar results can be derived under homogeneous Neumann boundary condition using the same proof technique.
\end{remark}

Now, we are ready to establish the discrete MBP for the proposed variable-step BDF2-sESAV-I scheme in the following theorem.

\begin{theorem}[MBP of the BDF2-sESAV-I scheme]\label{thm:MBP_2} 
	Assume the conditions in Lemma \ref{lem:boundQ} and the boundedness assumption (\textbf{A2}) hold, then the variable-step BDF2-sESAV-I scheme \eqref{sch:2_1} preserves the MBP at the discrete level, i.e.,  
	$$
\text{if}~  \| \phi^0 \|_{\infty} \leq \beta \Longrightarrow 	\| \phi^n \|_{\infty} \leq \beta \quad {\rm for} \  1 \leq n \leq N.
	$$
\end{theorem}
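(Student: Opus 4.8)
The plan is to prove the bound $\|\phi^n\|_\infty\le\beta$ by induction on $n$, using the recombination identity \eqref{MBP:f2} for $\psi^n$ together with the two key matrix estimates already available: Lemma~\ref{lem:MBP_left} applied to the operator $\mathcal{G}_h^{n}=(d_0^{(n)}+\kappa)I-\varepsilon^2\Delta_h$, and the maximum-norm bound \eqref{boundQ} on the kernels $Q_k^{n}$. The base case $n=0$ is the hypothesis $\|\phi^0\|_\infty\le\beta$. For the inductive step, suppose $\|\phi^j\|_\infty\le\beta$ for all $0\le j\le n-1$; then from $\psi^k=\phi^k-\eta\phi^{k-1}$ and $0<\eta<1$ we immediately get $\|\psi^0\|_\infty\le\beta$ and $\|\psi^k\|_\infty\le(1+\eta)\beta$ for $1\le k\le n-1$. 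Also, since $\|\phi^{n-1}\|_\infty\le\beta$ and $\kappa\ge\|f'\|_{C[-\beta,\beta]}$, Lemma~\ref{lem:MBP_BDF1} gives $\|\hat\phi^n\|_\infty\le\beta$, and then Lemma~\ref{lem:MBP_right} yields $\|f(\hat\phi^n)+\kappa\hat\phi^n\|_\infty\le\kappa\beta$.

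Next I would estimate the right-hand side of \eqref{MBP:f2} in the $\|\cdot\|_\infty$ norm. Using \eqref{boundQ} for the kernel terms, the bound $\|\psi^k\|_\infty\le(1+\eta)\beta$ for $1\le k\le n-1$, the bound $\|\psi^0\|_\infty\le\beta$ for the $k=0$ term (whose coefficient is $Q_n^{n}$), assumption \textbf{(A2)} giving $0\le V(g_h(\hat\phi^n,R^{n-1}))\le1$, and the bound $\|f(\hat\phi^n)+\kappa\hat\phi^n\|_\infty\le\kappa\beta$, one obtains
\begin{equation*}
\big\|\mathcal{G}_h^{n}\psi^n\big\|_\infty
\le (1+\eta)\beta\sum_{k=1}^{n-1}\big(d_{k-1}^{(n)}-d_k^{(n)}-\kappa\eta^k\big)
+\beta\big(d_{n-1}^{(n)}-d_n^{(n)}-\kappa\eta^n\big)+\kappa\beta.
\end{equation*}
The sum telescopes to $d_0^{(n)}-d_{n-1}^{(n)}-\kappa\sum_{k=1}^{n-1}\eta^k$, so after collecting terms and using $d_n^{(n)}=\eta\,d_{n-1}^{(n)}$ from \eqref{Formula:new_ker_2} and $\kappa\sum_{k=1}^{n}\eta^k\le\kappa\eta/(1-\eta)$ one should be able to bound the right-hand side by $\beta\big(d_0^{(n)}+\kappa\big)$; the $\eta$-weighted cross terms involving $d_{n-1}^{(n)}$ and the geometric $\kappa$-sums are exactly what the non-negativity and monotonicity $d_0^{(n)}\ge d_1^{(n)}\ge\cdots\ge d_n^{(n)}\ge0$ (guaranteed by \eqref{Formula:eta}, i.e. $0<r_k<1+\sqrt2$) are designed to control. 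On the left, Lemma~\ref{lem:MBP_left} applied with $a=d_0^{(n)}+\kappa$ and $B=\varepsilon^2\Delta_h$ (whose diagonal entries satisfy the required sign condition) gives $\big\|\mathcal{G}_h^{n}\psi^n\big\|_\infty\ge(d_0^{(n)}+\kappa)\|\psi^n\|_\infty$. Combining, $\|\psi^n\|_\infty\le\beta$. Finally, from $\phi^n=\sum_{\ell=0}^{n}\eta^{n-\ell}\psi^\ell$ in \eqref{new_ker:f1}, together with $\|\psi^\ell\|_\infty\le(1+\eta)\beta$ for $\ell\ge1$ and $\|\psi^0\|_\infty\le\beta$ — or more cleanly by rewriting \eqref{MBP:f1} and repeating the same $\mathcal{G}_h^{n}$ argument directly for $\phi^n$, using $\|\phi^{n-1}\|_\infty\le\beta$ — one concludes $\|\phi^n\|_\infty\le\beta$, closing the induction.

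I expect the main obstacle to be the bookkeeping in the telescoping estimate: one must verify that the accumulated coefficients on the right-hand side of \eqref{MBP:f2}, namely $(1+\eta)\beta\sum_{k=1}^{n-1}\|Q_k^{n}\|_\infty+\beta\|Q_n^{n}\|_\infty+\kappa\beta$, really do not exceed $(d_0^{(n)}+\kappa)\beta$, and this hinges delicately on the factor $(1+\eta)$ from $\|\psi^k\|_\infty$ interacting with the geometric decay $d_k^{(n)}=\eta^{k-1}(b_0^{(n)}\eta+b_1^{(n)})$ and the subtracted $\kappa\eta^k$ terms. The cleanest route is probably to bypass $\psi^n$ for the final bound and instead work with \eqref{MBP:f1} directly: there the inhomogeneous term is $\eta d_0^{(n)}\phi^{n-1}$ plus $\sum_{k=0}^{n-1}(d_{n-k-1}^{(n)}-d_{n-k}^{(n)})\psi^k$ plus $V(\cdot)(f(\hat\phi^n)+\kappa\hat\phi^n)$, whose $\infty$-norm is at most $\eta d_0^{(n)}\beta+(1+\eta)\beta\sum_{k=1}^{n-1}(d_{k-1}^{(n)}-d_k^{(n)})+\beta(d_{n-1}^{(n)}-d_n^{(n)})+\kappa\beta$; the telescoping sum plus the $k=0$ term gives $(1+\eta)d_0^{(n)}\beta-\eta\beta d_{n-1}^{(n)}-\eta\beta d_n^{(n)}+\cdots$ — here the $\psi^0$ term contributes only $\beta$, not $(1+\eta)\beta$, which is what makes the estimate close — and one checks the total is $\le(d_0^{(n)}+\kappa)\beta$. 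Either way, the structural inputs are all in place; it is a matter of carefully ordering the telescoping and the geometric sums.
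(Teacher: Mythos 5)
Your overall architecture (induction on $n$, kernel recombination, Lemma \ref{lem:MBP_left} for the operator $\mg_h^{n}$, Lemma \ref{lem:boundQ} for the kernels $Q_k^{n}$, and Lemmas \ref{lem:MBP_BDF1} and \ref{lem:MBP_right} for the nonlinear term) is exactly the paper's, but there is a genuine gap: you feed only the weak triangle-inequality bound $\|\psi^k\|_\infty\le(1+\eta)\beta$ into the induction, and with that bound the estimate provably does not close. Working with \eqref{MBP:f1} as you propose, the telescoping gives
\begin{equation*}
\eta d_0^{(n)}\beta+(1+\eta)\beta\big(d_0^{(n)}-d_{n-1}^{(n)}\big)+\beta\big(d_{n-1}^{(n)}-d_n^{(n)}\big)+\kappa\beta
=(1+2\eta)\,d_0^{(n)}\beta-2\eta\, d_{n-1}^{(n)}\beta+\kappa\beta,
\end{equation*}
where $d_n^{(n)}=\eta d_{n-1}^{(n)}$ from \eqref{Formula:new_ker_2} was used; for this to be at most $(d_0^{(n)}+\kappa)\beta$ you would need $d_0^{(n)}\le d_{n-1}^{(n)}$, which contradicts the monotone decrease of the kernels for $n\ge2$. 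The deficit $2\eta\big(d_0^{(n)}-d_{n-1}^{(n)}\big)\beta$ scales like $1/\tau_n$, so in the $\psi^n$ route via \eqref{MBP:f2} the bounded, subtracted $\kappa\eta^k$ terms cannot rescue it either. Likewise, your fallback reconstruction $\phi^n=\sum_{\ell=0}^n\eta^{n-\ell}\psi^\ell$ with $\|\psi^\ell\|_\infty\le(1+\eta)\beta$ only yields a bound of order $\frac{1+\eta}{1-\eta}\beta$, which exceeds $\beta$; even $\|\psi^\ell\|_\infty\le\beta$ would give $\beta\sum_{\ell}\eta^{n-\ell}>\beta$.

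The missing idea is that the $\psi$-bound must be \emph{strengthened and carried as part of the induction hypothesis}: the paper proves simultaneously $\|\phi^k\|_\infty\le\beta$ \emph{and} $\|\psi^k\|_\infty\le(1-\eta)\beta$ for all $k\ge1$ (with $\|\psi^0\|_\infty\le\beta$), the latter established at each step from \eqref{MBP:f2} via Lemma \ref{lem:boundQ}, since
$\sum_{k}\|Q^{n}_{n-k}\|_\infty\|\psi^k\|_\infty+\kappa V\beta\le(1-\eta)(d_0^{(n)}+\kappa)\beta+\kappa\big(V-1\big)\beta$
and assumption (\textbf{A2}) gives $V\le1$. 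With $(1-\eta)\beta$ in place of $(1+\eta)\beta$, the telescoping in \eqref{MBP:f1} collapses exactly to $(d_0^{(n)}+\kappa)\beta+\kappa(V-1)\beta\le(d_0^{(n)}+\kappa)\beta$, and $(1-\eta)\beta$ is also precisely the constant that makes $\phi^n=\psi^n+\eta\phi^{n-1}$ return the bound $\beta$. So the structural inputs you list are the right ones, but the induction as you set it up cannot be completed; you must prove and propagate the sharper bound on $\psi^k$.
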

\begin{proof}
The desired result will be proved together with the following conclusion
\begin{equation}\label{MBP:f3}
	\begin{aligned}
		\text{if}~  \| \phi^0 \|_{\infty} \leq \beta \Longrightarrow  \| \psi^{n} \|_{ \infty } \leq ( 1 - \eta ) \beta \quad \text{for} \  1 \leq n \leq N.
	\end{aligned}
\end{equation}
We start to prove this theorem with the mathematical induction argument. 

Due to $ \| \phi^0 \|_{\infty} \leq \beta $, it follows directly from Lemma \ref{lem:MBP_BDF1} that $ \| \hat{ \phi }^{1} \|_{\infty} \leq \beta $. Now taking $ n = 1 $ in \eqref{MBP:f1}, together with the facts $ d^{(1)}_{1} = \eta d^{(1)}_{0} $ and $ \psi^0 = \phi^0 $, one has
\begin{equation*}
	\begin{aligned}
		& \mg_h^{1} \phi^{1} = \eta d^{(1)}_{0} \phi^{0} + ( 1 - \eta) d^{(1)}_{0} \psi^{0} +  V( g_h(\hat{\phi}^{1}, R^{0}) )\, ( f(\hat{\phi}^{1}) + \kappa \hat{\phi}^{1} ) .
	\end{aligned}
\end{equation*}
Thus, the application of Lemmas \ref{lem:MBP_left}--\ref{lem:MBP_right} and assumption (\textbf{A2}) yields
$$
\begin{aligned}
	( d^{(1)}_{0} + \kappa ) \| \phi^{1} \|_{ \infty } & \leq d^{(1)}_{0} \| \phi^{0} \|_{ \infty } + \kappa V( g_h(\hat{\phi}^{1}, R^{0}) ) \| \hat{\phi}^{1} \|_{ \infty } \leq ( d^{(1)}_{0} + \kappa ) \beta,
\end{aligned}
$$
which implies $ \| \phi^{1} \|_{ \infty } \leq \beta $. 

Next, we shall bound $ \| \psi^{1} \|_{\infty} $ in the form of \eqref{MBP:f3}. Taking $ n = 1 $ in \eqref{MBP:f2} and using Lemmas \ref{lem:MBP_left}--\ref{lem:MBP_right} and \ref{lem:boundQ}, we get
\begin{equation*}
	\begin{aligned}
		( d^{(1)}_{0} + \kappa ) \| \psi^{1} \|_{\infty} & \leq  \| Q^{1}_{1} \|_{\infty} \| \psi^{0} \|_{\infty} + \kappa V( g_h(\hat{\phi}^{1}, R^{0}) ) \| \hat{\phi}^{1} \|_{\infty} \\
		& \leq ( d_{0}^{(1)}-d_1^{(1)} - \kappa \eta ) \beta + \kappa V( g_h(\hat{\phi}^{1}, R^{0}) ) \beta\\
		& = ( 1 - \eta ) ( d^{(1)}_{0} + \kappa ) \beta + \kappa \big( V( g_h(\hat{\phi}^{1}, R^{0}) ) - 1 \big) \beta.
	\end{aligned}
\end{equation*}
Thus \eqref{MBP:f3} holds for $ n = 1 $.
For any $ 2 \leq n \leq N $, we assume that
\begin{equation}\label{MBP:f4}
	\begin{aligned}
		\| \phi^{k} \|_{ \infty } \leq \beta \quad \text{and} \quad \| \psi^{k} \|_{ \infty } \leq ( 1 - \eta ) \beta \quad \text{for} \  1 \leq k \leq n-1.
	\end{aligned}
\end{equation}
Then, by Lemma \ref{lem:MBP_BDF1} we see $ \|\hat{\phi}^{n}\| \leq \beta $. This combined with \eqref{MBP:f1}, and using \eqref{Formula:new_ker_2} and  Lemmas \ref{lem:MBP_left}--\ref{lem:MBP_right} we have
\begin{equation}\label{MBP:f5}
	\begin{aligned}
		( d^{(n)}_{0} + \kappa ) \| \phi^n \|_{\infty} 
		& \leq \eta d^{(n)}_{0} \| \phi^{n-1} \|_{\infty} + \sum^{n-1}_{k=0} ( d^{(n)}_{n-k-1} - d^{(n)}_{n-k} ) \| \psi^{k} \|_{\infty}+ \kappa V( g_h(\hat{\phi}^{n}, R^{n-1}) ) \beta \\
		& \le \big( d^{(n)}_{0} + \kappa  \big) \beta + \kappa \big( V( g_h(\hat{\phi}^{n}, R^{n-1}) ) - 1 \big) \beta,
	\end{aligned}
\end{equation}
which together with assumption (\textbf{A2}) gives $ \| \phi^n \|_{\infty} \leq \beta $. 

It remains to evaluate $ \| \psi^{n} \|_{\infty} $. By applying Lemma \ref{lem:boundQ}, \eqref{Formula:new_ker_2} and the inductive hypothesis \eqref{MBP:f4}, equation \eqref{MBP:f2} gives rise to
\begin{equation}\label{MBP:f6}
	\begin{aligned}
		( d^{(n)}_{0} + \kappa ) \| \psi^n \|_{\infty}  & \leq \sum^{n-1}_{k=1} \| Q^{n}_{n-k} \|_{\infty} \| \psi^{k} \|_{\infty} + \| Q^{n}_{n} \|_{\infty} \| \psi^{0} \|_{\infty} +\kappa V( g_h(\hat{\phi}^{n}, R^{n-1}) ) \beta \\
		& \le (1 - \eta) ( d^{(n)}_{0} + \kappa ) \beta + \kappa \big( V( g_h(\hat{\phi}^{n}, R^{n-1}) ) - 1 \big) \beta,
	\end{aligned}
\end{equation}
which immediately leads to $ \| \psi^n \|_{\infty} \leq ( 1 - \eta ) \beta $ by assumption (\textbf{A2}), and thus the proof is completed.
\end{proof}

\begin{remark} 
	It is worth noting that, apart from the estimates of the matrices $ \mg_h^{n} $ and $ Q^{n}_{k} $ (see Lemmas \ref{lem:MBP_left} and \ref{lem:boundQ}, respectively), the proof of Theorem \ref{thm:MBP_2} does not rely on the space dimensions or boundary conditions. Moreover, it can be readily verified that under appropriate time-step restrictions, Lemmas \ref{lem:MBP_left} and \ref{lem:boundQ} remain valid for the three space dimensions and/or homogeneous Neumann boundary condition (see Remark \ref{rem:Neumann_3D}). Therefore, the extensions of the MBP-preserving result to these settings are straightforward.
\end{remark}

\begin{remark}\label{rem:MBP0} 
	Following the same proof of \eqref{MBP:f5}--\eqref{MBP:f6} and using assumption (\textbf{A2}), we can also derive
	\begin{equation}\label{MBP:sESAV2_f1}
		\begin{aligned}
			( d^{(n)}_{0} + \kappa ) \| \phi^n \|_{\infty} \leq ( d^{(n)}_{0} + \kappa ) \beta, \  ( d^{(n)}_{0} + \kappa ) \| \psi^n \|_{\infty}  \leq (1 - \eta) ( d^{(n)}_{0} + \kappa ) \beta 
		\end{aligned}
	\end{equation}
	for the BDF2-sESAV-II scheme, which also implies $ \| \phi^n \|_{\infty} \leq \beta $ and $ \| \psi^n \|_{\infty} \leq ( 1 - \eta ) \beta $. For the BDF2-sESAV-III scheme, we use the conclusion \eqref{boundQ_tilde} in Lemma \ref{lem:boundQ} with $ \lambda = V( g_h(\hat{\phi}^{n}, R^{n-1}) $ to get
	\begin{equation}\label{MBP:sESAV3_f1}
		\begin{aligned}
			( d^{(n)}_{0} + \kappa V( g_h(\hat{\phi}^{n}, R^{n-1}) ) ) \| \phi^n \|_{\infty} &\leq ( d^{(n)}_{0} + \kappa V( g_h(\hat{\phi}^{n}, R^{n-1}) ) ) \beta, \\
			( d^{(n)}_{0} + \kappa V( g_h(\hat{\phi}^{n}, R^{n-1}) ) ) \| \psi^n \|_{\infty}  &\leq (1 - \eta) ( d^{(n)}_{0} + \kappa V( g_h(\hat{\phi}^{n}, R^{n-1}) ) ) \beta,
		\end{aligned}
	\end{equation}
	which can also deduce the discrete MBP for the BDF2-sESAV-III scheme.
\end{remark}

\begin{remark}\label{rem:MBP} 
	Denote $ \mathcal{P}^n := \frac{ V( g_h(\hat{\phi}^{n}, R^{n-1}) ) - 1 }{ d^{(n)}_{0} + \kappa } \kappa \beta $, the results of \eqref{MBP:f5}--\eqref{MBP:f6} directly lead to
	\begin{equation*}
			\| \phi^n \|_{\infty} \leq \beta + \mathcal{P}^n \quad  {\rm and} \quad   \| \psi^n \|_{\infty} \leq ( 1 - \eta ) \beta + \mathcal{P}^n.
	\end{equation*}
	For a large temporal stepsize, $ g_h(\hat{\phi}^{n}, R^{n-1}) $ and $ V( g_h(\hat{\phi}^{n}, R^{n-1}) ) $ may deviate significantly from $ 1 $. In this case, the negative term $ \mathcal{P}^n $ can be considered as a penalty term to generate solutions with smaller $L^{\infty}$-norm. However, there is no such similar term in the BDF2-sESAV-II and BDF2-sESAV-III schemes, see \eqref{MBP:sESAV2_f1} and \eqref{MBP:sESAV3_f1}. This means that compared to these two schemes, the proposed BDF2-sESAV-I with a modified stabilization term will be more effective in preserving the MBP.
\end{remark}

%%%%%%%%%%%%%%%%%%%%%%%%%%%%%
\subsection{Discrete energy dissipation law}
%%%%%%%%%%%%%%%%%%%%%%%%%%%%%
Now, we consider the energy dissipation law of the variable-step BDF2-sESAV-I scheme \eqref{sch:2_1} by introducing a discrete modified energy as follows
\begin{equation}\label{def:dis_energy}
		\me_h[\phi^{n}, R^{n}] := \frac{\varepsilon^2}{2}\left\|\nabla_h \phi^n\right\|^2 + R^n + \langle G[ \nabla_{\tau} \phi^{n} ], 1 \rangle, ~n \ge 1,
\end{equation}
with $ \me_h[\phi^{0}, R^{0}] := \frac{\varepsilon^2}{2}\left\|\nabla_h \phi^0\right\|^2 + R^0 $  corresponding to the discrete version initial energy.

\begin{theorem}[Energy dissipation of the BDF2-sESAV-I scheme]\label{thm:energy_2} 
	Assume the conditions in Lemma \ref{lem:BDF2_P1} hold. Then for any $\kappa \geq 0$, the variable-step BDF2-sESAV-I scheme \eqref{sch:2_1} is energy dissipative in the sense that 
	\begin{equation}\label{dis_energy:e1}
			\me_h[\phi^{n}, R^{n}] \leq \me_h[\phi^{n-1}, R^{n-1}], ~n \ge 1.
	\end{equation}
\end{theorem}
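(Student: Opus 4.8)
The plan is to test the two equations of the scheme \eqref{sch:2_1} against appropriate quantities and add. First I would rewrite the first equation of \eqref{sch:2_1} using the convolution form \eqref{Formula:BDF2} of $\md_2\phi^n$ and take the discrete inner product with $\nabla_\tau\phi^n = \tau_n\,\md_1\phi^n$. For the left-hand side this produces $\tau_n\langle \sum_{k=1}^n b^{(n)}_{n-k}\nabla_\tau\phi^k, \md_1\phi^n\rangle$, to which Lemma \ref{lem:BDF2_P1} applies directly (componentwise, then summed with the weight $h^2$): it yields $\langle G[\nabla_\tau\phi^n],1\rangle - \langle G[\nabla_\tau\phi^{n-1}],1\rangle + \frac{\delta}{32\tau_n}\|\nabla_\tau\phi^n\|^2$ for $n\ge 2$, and $\langle G[\nabla_\tau\phi^1],1\rangle$ for $n=1$. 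The Laplacian term $\varepsilon^2\langle\Delta_h\phi^n,\nabla_\tau\phi^n\rangle$ is handled by summation by parts and the elementary identity $2(a-b,a) = \|a\|^2-\|b\|^2+\|a-b\|^2$, giving $-\frac{\varepsilon^2}{2}\big(\|\nabla_h\phi^n\|^2-\|\nabla_h\phi^{n-1}\|^2+\|\nabla_h\nabla_\tau\phi^n\|^2\big)$.

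Next I would observe that the second equation of \eqref{sch:2_1}, written as $\nabla_\tau R^n = \tau_n\,\md_1 R^n = -\langle V(g_h(\hat\phi^n,R^{n-1}))f(\hat\phi^n) - \kappa(\phi^n - V(g_h(\hat\phi^n,R^{n-1}))\hat\phi^n),\ \nabla_\tau\phi^n\rangle$, is precisely the negative of the inner product of the nonlinear-plus-stabilization part of the first equation with $\nabla_\tau\phi^n$. Hence when I move that inner product to the other side and add the two relations, the terms $\langle V(g_h(\cdot))f(\hat\phi^n) - \kappa(\phi^n - V(g_h(\cdot))\hat\phi^n),\nabla_\tau\phi^n\rangle$ cancel exactly against $\nabla_\tau R^n$. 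This is the crucial algebraic cancellation that the ESAV construction is designed to give, and it is why the scheme needs no assumption on the sign or size of the coefficients of the nonlinear term. After this cancellation I am left, for $n\ge 2$, with
$$
\frac{\varepsilon^2}{2}\|\nabla_h\phi^n\|^2 + R^n + \langle G[\nabla_\tau\phi^n],1\rangle
\le \frac{\varepsilon^2}{2}\|\nabla_h\phi^{n-1}\|^2 + R^{n-1} + \langle G[\nabla_\tau\phi^{n-1}],1\rangle
- \frac{\delta}{32\tau_n}\|\nabla_\tau\phi^n\|^2 - \frac{\varepsilon^2}{2}\|\nabla_h\nabla_\tau\phi^n\|^2,
$$
which is exactly \eqref{dis_energy:e1} with two nonpositive terms to spare; for $n=1$ the term $\langle G[\nabla_\tau\phi^0],1\rangle$ is absent and the initialization $\me_h[\phi^0,R^0] = \frac{\varepsilon^2}{2}\|\nabla_h\phi^0\|^2+R^0$ is used, so the argument closes there as well.

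I do not anticipate a genuine obstacle here; the estimate is a fairly clean energy argument once the right test function is chosen. The one point requiring a little care is bookkeeping the $n=1$ case separately, since $\md_1$ rather than $\md_2$ is used and the definition of $\me_h[\phi^0,R^0]$ in \eqref{def:dis_energy} omits the $G$-term — Lemma \ref{lem:BDF2_P1} was stated with a matching $n=1$ branch precisely to accommodate this. A second minor point is that Lemma \ref{lem:BDF2_P1} is a scalar inequality for real sequences, so I apply it pointwise at each grid node to the sequence $\{\phi^k_{i,j}\}$ and then multiply by $h^2$ and sum over $i,j$; the weight $G[\nabla_\tau\phi^n]$ in \eqref{def:dis_energy} is understood as the grid function with these nodal values, consistent with the $\langle\cdot,1\rangle$ notation. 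Note also that the stabilization constant $\kappa$ enters only through the cancelled inner product, which is why the dissipation holds for every $\kappa\ge 0$ with no further restriction.
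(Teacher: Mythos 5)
Your proposal is correct and follows essentially the same route as the paper's proof: test the first equation against $\nabla_\tau\phi^n$, apply Lemma \ref{lem:BDF2_P1} (nodewise, summed with weight $h^2$) to the BDF2 term, use summation by parts plus the polarization identity for the Laplacian, and cancel the nonlinear-plus-stabilization inner product exactly against $\nabla_\tau R^n$ from the second equation. Your handling of the $n=1$ case and the remark that $\kappa$ enters only through the cancelled term match the paper's argument.
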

\begin{proof}
Taking the inner product with the first equation of \eqref{sch:2_1} by $\nabla_{\tau} \phi^{n} = \phi^{n}-\phi^{n-1}$ yields
\begin{equation}\label{energy1_f1}
	\begin{aligned}
		& \big\langle \md_{2} \phi^{n}, \phi^{n}-\phi^{n-1} \big\rangle - \varepsilon^2 \big\langle\Delta_h \phi^{n}, \phi^{n}-\phi^{n-1}\big\rangle \\
		& \quad = \big\langle V ( g_h(\hat{\phi}^{n}, R^{n-1}) ) f(\hat{\phi}^{n}) - \kappa ( \phi^{n} - V ( g_h(\hat{\phi}^{n}, R^{n-1}) ) \hat{\phi}^{n} ), \phi^{n}-\phi^{n-1} \big\rangle.
	\end{aligned}
\end{equation}
It follows from Lemma \ref{lem:BDF2_P1} that $ \big\langle \md_{2} \phi^{1} , \phi^{1}-\phi^{0} \big\rangle \geq \langle G[ \nabla_{\tau} \phi^{1} ], 1 \rangle $ and
$$
\big\langle \md_{2} \phi^{n} , \phi^{n}-\phi^{n-1} \big\rangle \geq \langle G[ \nabla_{\tau} \phi^{n} ], 1 \rangle - \langle G[ \nabla_{\tau}  \phi^{n-1} ], 1 \rangle, \quad  n \geq 2,
$$
which together with the second equation of \eqref{sch:2_1}, \eqref{energy1_f1} and the identity
$$
	\big\langle\Delta_h \phi^{n}, \phi^{n}-\phi^{n-1}\big\rangle=  -\frac{1}{2}\left\|\nabla_h \phi^{n}\right\|^2 + \frac{1}{2}\left\|\nabla_h \phi^{n-1}\right\|^2 -\frac{1}{2}\left\|\nabla_h \phi^{n}-\nabla_h \phi^{n-1}\right\|^2
$$
gives us
$$
	\me_h[\phi^{n}, R^{n}]-\me_h[\phi^{n-1}, R^{n-1}] \leq  -\frac{\varepsilon^2}{2}\left\|\nabla_h \phi^{n}-\nabla_h \phi^{n-1}\right\|^2 \leq 0.
$$
The proof is completed.
\end{proof}

Like the previous SAV methods, Theorem \ref{thm:energy_2} only presents a modified energy dissipation law for the proposed BDF2-sESAV-I scheme. %, which can not assure the original energy dissipating. 
However, in our context,  we can show that the modified  energy is convergent to the original one with specified convergence orders. Meanwhile, the original energy dissipation law holds in an approximate setting. The proof is presented in \ref{App:A} following  error estimates for $R$ in Theorem \ref{thm:ErrR} and $\phi$ in Theorem \ref{thm:ErrPhi}.
	
\begin{theorem}[Energy approximations]\label{thm:Err_Energy}
		Assume the conditions in Theorems \ref{thm:MBP_2}--\ref{thm:energy_2} and the assumptions (\textbf{A1})--(\textbf{A2}) hold. Moreover, suppose that the exact solution $ \phi $ satisfies
		$
		\phi \in W^{3,\infty}(0,T;L^{\infty}(\Omega)) \cap W^{1,\infty}(0,T;W^{2,\infty}(\Omega)) \cap L^{\infty}(0,T;W^{4,\infty}(\Omega))
		$. If the temporal stepsize satisfies $ \tau_{1} \leq \tau^{4/3} $ and  $ \tau_{n} \leq \hat{ \tau } $,
		then we have
		\begin{equation}\label{thmErrE:0}
			\big\vert E_h[\phi^{n}] - \me_h[\phi^{n}, R^{n}] \big\vert =\mo ( \tau + h^2 ), 
		\end{equation}
		for $ n \geq 0$ and
		\begin{equation}\label{thmErrE:1}
			E_h[\phi^{n}] \leq E_h[\phi^{n-1}] +  \mo ( \tau + h^2 ), \quad E_h[\phi^{n}] \leq E_h[\phi^{0}] +  \mo ( \tau + h^2 ),
		\end{equation}
		for $ n \geq 1$.
		Here the discrete original energy is defined by $ E_h[\phi^{n}] := \frac{\varepsilon^2}{2}\left\|\nabla_h \phi^n\right\|^2 + E_{1h}[ \phi^{n} ] $.
	\end{theorem}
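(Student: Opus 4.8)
The plan is to reduce everything to the two error estimates already announced, namely the $L^\infty$-in-time bound on $R(t_n)-R^n$ (Theorem \ref{thm:ErrR}) and the $H^1$/$L^\infty$ error bound on $\phi(t_n)-\phi^n$ (Theorem \ref{thm:ErrPhi}), both of order $\mathcal O(\tau+h^2)$ under the stated regularity and stepsize hypotheses. The key observation is that the difference between the modified and original discrete energies is exactly
\[
E_h[\phi^n]-\me_h[\phi^n,R^n]=\big(E_{1h}[\phi^n]-R^n\big)-\langle G[\nabla_\tau\phi^n],1\rangle ,
\]
so \eqref{thmErrE:0} follows once we show each of these two bracketed quantities is $\mathcal O(\tau+h^2)$. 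The second term is the easier one: by definition $G[\nabla_\tau\phi^n]=\frac{r_{n+1}\sqrt{r_{\max}}}{2(1+r_{n+1})}\frac{(\nabla_\tau\phi^n)^2}{\tau_n}$, and since the stepsize ratios are bounded the prefactor is $\mathcal O(1)$, while $\|\nabla_\tau\phi^n\|^2/\tau_n\le \tau_n\|\md_1\phi^n\|^2$; using $\phi^n=\phi(t_n)+\mathcal O(\tau+h^2)$ together with $\phi\in W^{1,\infty}(0,T;L^\infty)$ gives $\|\md_1\phi^n\|=\mathcal O(1)$, hence $\langle G[\nabla_\tau\phi^n],1\rangle=\mathcal O(\tau)$.

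For the first term, $E_{1h}[\phi^n]-R^n$, I would insert the exact auxiliary variable: write $E_{1h}[\phi^n]-R^n=\big(E_{1h}[\phi^n]-E_{1h}[\phi(t_n)]\big)+\big(E_1[\phi(t_n)]-E_{1h}[\phi(t_n)]\big)+\big(R(t_n)-R^n\big)$, recalling $R(t_n)=E_1[\phi(t_n)]$. The first difference is controlled by the Lipschitz continuity of $F$ on $[-\beta,\beta]$ (valid since both $\|\phi^n\|_\infty\le\beta$ by Theorem \ref{thm:MBP_2} and $\|\phi(t_n)\|_\infty\le\beta$ by the continuous MBP) and the $L^\infty$-norm error bound on $\phi$, giving $\mathcal O(\tau+h^2)$; the second is the spatial quadrature error $\mathcal O(h^2)$ already used as $S_4$ in the proof of Lemma \ref{lem:estig}; the third is $\mathcal O(\tau+h^2)$ by Theorem \ref{thm:ErrR}. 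Summing the three contributions and combining with the $G$-term estimate proves \eqref{thmErrE:0} for all $n\ge0$ (the case $n=0$ being immediate from $\me_h[\phi^0,R^0]=\frac{\varepsilon^2}{2}\|\nabla_h\phi^0\|^2+R^0$ and $R^0=E_{1h}[\phi_{\text{init}}]$, so the difference is exactly zero).

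Finally, \eqref{thmErrE:1} follows by chaining \eqref{thmErrE:0} with the modified-energy dissipation law \eqref{dis_energy:e1}: indeed
\[
E_h[\phi^n]\le \me_h[\phi^n,R^n]+\mathcal O(\tau+h^2)\le \me_h[\phi^{n-1},R^{n-1}]+\mathcal O(\tau+h^2)\le E_h[\phi^{n-1}]+\mathcal O(\tau+h^2),
\]
and iterating (the number of steps $N\le T/\tau_{\min}$ being fixed, so the accumulated constant stays bounded in the sense of the $\mathcal O$) gives the bound relative to $E_h[\phi^0]$. The main obstacle is not any single estimate here but the fact that the whole theorem is parasitic on Theorems \ref{thm:ErrR} and \ref{thm:ErrPhi}: one must be careful that the boundedness hypotheses $R^n\le\mathcal M$ and $\|\phi^n\|_\infty\le\beta$ invoked in Lemma \ref{lem:estig} and in the error analysis are genuinely supplied by Theorems \ref{thm:MBP_2}--\ref{thm:energy_2} rather than assumed circularly, and that the stepsize restrictions $\tau_1\le\tau^{4/3}$ and $\tau_n\le\hat\tau$ are exactly those needed to make the error theorems applicable; once that bookkeeping is in place, the energy comparison is a short computation.
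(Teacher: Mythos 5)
Your overall strategy coincides with the paper's: the same splitting of $E_h[\phi^n]-\me_h[\phi^n,R^n]$ into the $G$-term and $E_{1h}[\phi^n]-R^n$, the same three-way decomposition of the latter through the exact solution (mean value theorem for $F$ on $[-\beta,\beta]$, the $O(h^2)$ quadrature error, and $e_R^n$ from Theorem \ref{thm:ErrR}), and the same chaining with the modified-energy dissipation law for \eqref{thmErrE:1}. However, two steps as you wrote them do not go through.

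First, your bound on the $G$-term rests on the claim that $\|\md_1\phi^n\|=\mathcal O(1)$ because $\phi^n=\phi(t_n)+\mathcal O(\tau+h^2)$. Dividing that error by $\tau_n$ gives $\md_1 e_\phi^n=\mathcal O((\tau+h^2)/\tau_n)$, which is not $\mathcal O(1)$ unless $h^2\lesssim\tau_n$ — a coupling the theorem does not assume. The paper avoids this by writing $\nabla_\tau\phi^n=\nabla_\tau\phi(t_n)-\nabla_\tau e_\phi^n$ and bounding $\langle G[\nabla_\tau e_\phi^n],1\rangle=\mathcal O(\tau^2+h^4)$ directly from \eqref{thmErrR:1}, where this quantity is one of the terms already controlled; the exact-solution part is $\mathcal O(\tau)$ by the time regularity of $\phi$. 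You should route the argument through \eqref{thmErrR:1} rather than through a pointwise bound on $\md_1\phi^n$.

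Second, and more seriously, your derivation of the global bound $E_h[\phi^n]\le E_h[\phi^0]+\mathcal O(\tau+h^2)$ by iterating the one-step inequality $E_h[\phi^k]\le E_h[\phi^{k-1}]+\mathcal O(\tau+h^2)$ fails: after $n$ steps the accumulated error is $n\cdot\mathcal O(\tau+h^2)$, and with $n\le N\approx T/\tau$ this is only $\mathcal O(1)$, not $\mathcal O(\tau+h^2)$. Your parenthetical that the accumulated constant "stays bounded in the sense of the $\mathcal O$" is exactly the point at which the claimed rate is lost. The correct argument, which is the one in the paper, telescopes the \emph{modified} energy exactly — $\me_h[\phi^n,R^n]\le\me_h[\phi^0,R^0]=E_h[\phi^0]$ with no error accumulation, since the modified energy is genuinely monotone and $\me_h[\phi^0,R^0]$ coincides with $E_h[\phi^0]$ by the choice of $R^0$ — and then applies \eqref{thmErrE:0} a single time at the final level $n$. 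Both repairs use only ingredients already available to you, but as written these two steps constitute genuine gaps.
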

	\begin{remark}
		We remark that a related work \cite{MOC_2023_Ju} proposed a different linear BDF2 scheme and demonstrated its energy stability, showing that $E_{h}[ \phi^{n} ] \leq E_{h}[ \phi^{0} ] + C$ under certain stepsize conditions, specifically $ \max_{1\leq n \leq N} \tau_{n} / \min_{1\leq n \leq N} \tau_{n} \leq \Gamma$ with a finite constant $ \Gamma $ and $ h = O ( \sqrt{ \tau } ) $, see \cite[Remark 4.2]{MOC_2023_Ju}. In contrast, 	we demonstrate in this work an improved original energy stability \eqref{thmErrE:1} without any  stepsize limitation, see Theorem \ref{thm:Err_Energy}.
	\end{remark}

%%%%%%%%%%%%%%%%%%%%%%%%%%%%%%%%%%%%%%%%%%%%
\section{Error analysis}\label{sec:Error}
%%%%%%%%%%%%%%%%%%%%%%%%%%%%%%%%%%%%%%%%%%%%
In this section, we shall present error analysis for the variable-step BDF2-sESAV-I scheme \eqref{sch:2_1}. Define the error functions as $
\hat{e}_{\phi}^{n} := \phi( t_{n} ) - \hat{\phi}^n$, $e_{\phi}^{n} := \phi( t_{n} ) - \phi^n$ and  $e_{R}^{n} := R( t_{n} ) - R^n$. From the energy dissipation law, it holds that for all $ n \ge 1$,
\begin{equation*}\label{Bound_s}
	\| \nabla_{h} \phi^{n} \| + R^{n} \leq \mathcal{M}, \quad \text{if} \  \phi( \mathbf{x}, 0 ) \in H^{1} ( \Omega ),
\end{equation*}
where $ \mathcal{M} $ is a positive constant depending only on $ \phi_{\text{init}}( \mathbf{x} ) $ and the domain $ \Omega $. 
Moreover, we assume that there exists constants $ \overline{K} \geq \underline{K} \geq 0 $ such that
\begin{equation}\label{Bound_expf}
	\underline{K} \leq \| F( \zeta ) \|_{\infty}, \| f( \zeta ) \|_{\infty}, \| f^{\prime}( \zeta ) \|_{\infty} \leq \overline{K} \quad \text{for}~ \zeta \in [-\beta,\beta].
\end{equation}
In fact, the maximum bound principle, see \eqref{MBP}, Lemma \ref{lem:MBP_BDF1} and Theorem \ref{thm:MBP_2}, implies the assumption \eqref{Bound_expf} holds for
$ \zeta = \phi( t_{n} ) $, $ \hat{\phi}^n $ or $ \phi^{n} $.  

Next, we present an intermediate error estimate for $ \hat{e}_{\phi}^{n} $, which is related to $ e_{\phi}^{n-1}$ and will be used to derive the main error estimate for the variable-step BDF2-sESAV-I scheme. The detailed proof is presented in  \ref{App:B}.
\begin{lemma}[Estimate for $ \hat{e}_{\phi}^{n} $]\label{lem:estihat} 
	Assume the conditions in Theorems \ref{thm:MBP_2}--\ref{thm:energy_2} hold and suppose the exact solution $ \phi $ satisfies
	$
	\phi \in W^{2,\infty}(0,T;L^{\infty}(\Omega)) \cap L^{\infty}(0,T;W^{4,\infty}(\Omega))
	$, then we have
	\begin{equation*}
		\| \hat{e}_{\phi}^{n} \| \leq C ( \| e_{\phi}^{n-1} \| + \tau_{n} ( \tau_{n} + h^2 )), \quad \| \hat{e}_{\phi}^{n} \|_{\infty} \leq C (\| e_{\phi}^{n-1} \|_{\infty} + \tau_{n} ( \tau_{n} + h^2 )).
	\end{equation*}
\end{lemma}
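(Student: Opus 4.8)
The plan is to derive an error equation for the BDF1 predictor step \eqref{sch:2_3} by subtracting it from the exact equation \eqref{Model:tAC} evaluated at $t_n$, and then to estimate the resulting quantity in both the discrete $L^2$-norm and the discrete $L^\infty$-norm. Writing the exact solution at $t_n$ as $\phi(t_n)$, the continuous equation gives $\phi_t(t_n) = \varepsilon^2 \Delta \phi(t_n) + f(\phi(t_n))$; replacing $\phi_t(t_n)$ by $\frac{\phi(t_n) - \phi(t_{n-1})}{\tau_n}$ introduces a first-order truncation error of size $\mathcal{O}(\tau_n)$ (more precisely, bounded by $C\tau_n \|\phi_{tt}\|_{L^\infty(t_{n-1},t_n;L^\infty)}$, which is controlled by the regularity hypothesis $\phi \in W^{2,\infty}(0,T;L^\infty(\Omega))$), and replacing $\Delta$ by $\Delta_h$ gives a spatial consistency error of size $\mathcal{O}(h^2)$ using $\phi \in L^\infty(0,T;W^{4,\infty}(\Omega))$. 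Subtracting \eqref{sch:2_3} from this consistent identity yields
\begin{equation*}
	\frac{\hat{e}_{\phi}^{n} - e_{\phi}^{n-1}}{\tau_n} = \varepsilon^2 \Delta_h \hat{e}_{\phi}^{n} + \big( f(\phi(t_n)) - f(\phi^{n-1}) \big) - \kappa ( \hat{e}_{\phi}^{n} - e_{\phi}^{n-1} ) + \rho^n,
\end{equation*}
where $\rho^n$ collects the truncation terms with $\|\rho^n\| + \|\rho^n\|_\infty \leq C(\tau_n + h^2)$. Here it is convenient to note that $f(\phi(t_n)) - f(\phi^{n-1}) = \big(f(\phi(t_n)) - f(\phi(t_{n-1}))\big) + \big(f(\phi(t_{n-1})) - f(\phi^{n-1})\big)$; the first bracket is $\mathcal{O}(\tau_n)$ by the mean value theorem and the Lipschitz bound \eqref{Bound_expf} (together with $\phi_t$ bounded), and the second is bounded by $\overline{K}\|e_\phi^{n-1}\|$ (respectively $\overline{K}\|e_\phi^{n-1}\|_\infty$) again via \eqref{Bound_expf}.

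Next I would rearrange to isolate $\hat{e}_{\phi}^{n}$ on the left. Collecting terms gives
\begin{equation*}
	\big( (1 + \kappa\tau_n) I - \varepsilon^2 \tau_n \Delta_h \big) \hat{e}_{\phi}^{n} = (1 + \kappa\tau_n) e_{\phi}^{n-1} + \tau_n \big( f(\phi(t_n)) - f(\phi^{n-1}) \big) + \tau_n \rho^n.
\end{equation*}
For the $L^2$-estimate, I take the inner product with $\hat{e}_{\phi}^{n}$: since $-\Delta_h$ is positive semidefinite and $\kappa \geq 0$, the left side is $\geq \|\hat{e}_{\phi}^{n}\|^2$, while the right side is bounded by $\big[(1+\kappa\tau_n)\|e_\phi^{n-1}\| + C\tau_n(\|e_\phi^{n-1}\| + \tau_n + h^2)\big]\|\hat{e}_{\phi}^{n}\|$ using the splitting of $f$ above and Cauchy--Schwarz. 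Dividing by $\|\hat{e}_{\phi}^{n}\|$ and absorbing the $\mathcal{O}(\tau_n)$ correction into the constant (valid for $\tau_n \leq \hat{\tau}$ bounded) gives $\|\hat{e}_{\phi}^{n}\| \leq C(\|e_\phi^{n-1}\| + \tau_n(\tau_n + h^2))$. For the $L^\infty$-estimate I invoke Lemma \ref{lem:MBP_left}: the matrix $B := \varepsilon^2\tau_n\Delta_h$ has the property $b_{ii} = -\max_i\sum_{j\neq i}|b_{ij}|$ (as recorded just before Lemma \ref{lem:MBP_left}), and $a := 1 + \kappa\tau_n > 0$, so $\|((1+\kappa\tau_n)I - \varepsilon^2\tau_n\Delta_h)\hat{e}_{\phi}^{n}\|_\infty \geq (1+\kappa\tau_n)\|\hat{e}_{\phi}^{n}\|_\infty$. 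Bounding the right-hand side in $\|\cdot\|_\infty$ in the same way and dividing yields the claimed $L^\infty$ estimate.

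The main obstacle — or rather the one point requiring care — is the handling of the nonlinear difference $f(\phi(t_n)) - f(\phi^{n-1})$: one must not naively Lipschitz-estimate it as $C\|e_\phi^{n-1} + (\phi(t_n)-\phi(t_{n-1}))\|$ and lose a factor, but instead split off the temporal increment $\phi(t_n) - \phi(t_{n-1}) = \mathcal{O}(\tau_n)$ explicitly so that it contributes to the $\tau_n(\tau_n + h^2)$ term (after multiplication by the $\tau_n$ prefactor) rather than polluting the $\|e_\phi^{n-1}\|$ term. A secondary subtlety is that the Lipschitz/boundedness constants in \eqref{Bound_expf} are only available because $\|\phi(t_n)\|_\infty, \|\phi^{n-1}\|_\infty, \|\hat\phi^n\|_\infty \leq \beta$, which in turn rests on the MBP results (the continuous MBP \eqref{MBP}, Lemma \ref{lem:MBP_BDF1}, and Theorem \ref{thm:MBP_2}); these are all available under "the conditions in Theorems \ref{thm:MBP_2}--\ref{thm:energy_2}" assumed in the statement, so the argument is self-contained. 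Everything else is routine, and the $\mathcal{O}(\tau_n)$ factor multiplying $\rho^n$ is exactly what upgrades the bare consistency error $\mathcal{O}(\tau_n + h^2)$ to the sharper $\tau_n(\tau_n + h^2)$ form needed downstream.
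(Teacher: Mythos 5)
Your proposal is correct and follows essentially the same route as the paper's proof in \ref{App:B}: form the BDF1 consistency identity with an $\mathcal{O}(\tau_n+h^2)$ truncation error, subtract the predictor scheme \eqref{sch:2_3}, then test with $\hat{e}_{\phi}^{n}$ for the $L^2$ bound and apply Lemma \ref{lem:MBP_left} to $(1+\kappa\tau_n)I-\varepsilon^2\tau_n\Delta_h$ for the $L^\infty$ bound. The only (cosmetic) difference is that the paper writes the consistency identity directly with $f(\phi(t_{n-1}))$, so the increment $f(\phi(t_n))-f(\phi(t_{n-1}))$ you split off explicitly is already absorbed into $T_1^n$ there.
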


Subtracting \eqref{sch:2_1} from \eqref{sav_1}, one gets the error equations
\begin{equation}\label{ErrR_3}
	\begin{aligned}
		\md_{2} e_\phi^{n} & = \varepsilon^2 \Delta_h e_\phi^{n} + f( \phi( t_{n} ) ) - V( g_h(\hat{\phi}^{n}, R^{n-1}) ) f(\hat{\phi}^{n})  \\
		            & \qquad + \kappa ( \phi^{n} - V ( g_h(\hat{\phi}^{n}, R^{n-1}) ) \hat{\phi}^{n} ) + T_2^n,
	\end{aligned}
\end{equation}
\begin{equation}\label{ErrR_4}
	\begin{aligned}
		\md_{1} e_R^{n} & = - \big\langle f(\phi(t_n)), \md_{1} \phi(t_n) \big\rangle + \big\langle V ( g_h(\hat{\phi}^{n}, R^{n-1}) ) f(\hat{\phi}^{n}), \md_{1} \phi^{n} \big\rangle \\
		& \quad - \kappa \big\langle  \phi^{n} - V ( g_h(\hat{\phi}^{n}, R^{n-1}) ) \hat{\phi}^{n} , \md_{1} \phi^{n} \big\rangle + T_{3}^n,
	\end{aligned}
\end{equation}
where the truncation errors $ T_2^n $ and $ T_3^n $ satisfy \cite{SINUM_2019_Chen,JSC_Qiao_2023,JSC_Ju_2022}:
\begin{equation}\label{ErrT_21}
		\begin{aligned}
	\| T_2^1 \|_{\infty} &=\mo(\tau_{1}  + h^2), ~	\| T_2^n \|_{\infty} =\mo( ( \tau_{n-1} + \tau_{n} )^2  + h^2 ), \  n \geq 2, \\
	\vert T_3^n \vert &=\mo( \tau_{n}   + h^2 ), \  n \geq 1.
		\end{aligned}
\end{equation}

The following theorem provides an error analysis for $  e_{R}^{n} $, along with the $H^{1}$-norm error estimate for $ e_{\phi}^{n} $, and also its detail proof is presented in \ref{App:C}.
\begin{theorem}[Error analysis for $  e_{R}^{n} $ and $ e_{\phi}^{n} $ in $H^{1}$-norm]\label{thm:ErrR} 
	Assume the conditions in Theorems \ref{thm:MBP_2}--\ref{thm:energy_2} and the assumptions (\textbf{A1})--(\textbf{A2}) hold. Moreover, suppose that the exact solution $ \phi $ satisfies
	$
	\phi \in W^{3,\infty}(0,T;L^{\infty}(\Omega)) \cap W^{1,\infty}(0,T;W^{2,\infty}(\Omega)) \cap L^{\infty}(0,T;W^{4,\infty}(\Omega))
	$. If the temporal stepsize satisfies $ \tau_{1} \leq \tau^{4/3} $ and $\tau_{n} \leq  \hat{ \tau }$,
	then it holds  
	\begin{equation}\label{thmErrR:1}
		\big\langle G[ \nabla_{\tau} e_\phi^{n} ], 1 \big\rangle + \frac{\kappa}{4} \| e_\phi^{n} \|^2 + \frac{\varepsilon^2 }{2} \| \nabla_{h} e_\phi^{n} \|^2 + \vert e_R^{n} \vert^2 =\mo ( \tau^2 + h^4 ).
	\end{equation}
	Moreover, we have
	\begin{equation}\label{thmErrR:2}
		\big\langle G[ \nabla_{\tau} e_\phi^{n} ], 1 \big\rangle  + \frac{\kappa}{4} \| e_\phi^{n} \|^2 + \frac{\varepsilon^2 }{2} \| \nabla_{h} e_\phi^{n} \|^2 =\mo ( \tau^4 + h^4 ).
	\end{equation}
\end{theorem}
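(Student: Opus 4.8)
The plan is a coupled discrete energy estimate for $e_\phi^{n}$ and $e_R^{n}$ built from the error equations \eqref{ErrR_3}--\eqref{ErrR_4} and closed by a discrete Gronwall inequality; the sharper bound \eqref{thmErrR:2} then follows from a bootstrap that feeds \eqref{thmErrR:1} into the quadratic half of Lemma \ref{lem:estig}. Concretely, I would take the discrete inner product of \eqref{ErrR_3} with $\nabla_{\tau} e_\phi^{n}$ and add to it $2\tau_{n} e_R^{n}$ times \eqref{ErrR_4}. On the left-hand side, Lemma \ref{lem:BDF2_P1} converts $\langle \md_{2} e_\phi^{n}, \nabla_{\tau} e_\phi^{n}\rangle$ into $\langle G[\nabla_{\tau} e_\phi^{n}],1\rangle-\langle G[\nabla_{\tau} e_\phi^{n-1}],1\rangle+\tfrac{\delta}{32\tau_{n}}\|\nabla_{\tau} e_\phi^{n}\|^{2}$ for $n\ge 2$ (for $n=1$ one uses $\md_{2} e_\phi^{1}=\md_{1} e_\phi^{1}=\tau_{1}^{-1}e_\phi^{1}$, so a positive multiple of $\tau_{1}^{-1}\|e_\phi^{1}\|^{2}$ survives after subtracting $\langle G[\nabla_{\tau} e_\phi^{1}],1\rangle$); the discrete Green's identity gives $\tfrac{\varepsilon^{2}}{2}(\|\nabla_{h} e_\phi^{n}\|^{2}-\|\nabla_{h} e_\phi^{n-1}\|^{2})$ plus a nonnegative term; and $2 e_R^{n}\nabla_{\tau} e_R^{n}=|e_R^{n}|^{2}-|e_R^{n-1}|^{2}+|\nabla_{\tau} e_R^{n}|^{2}$.

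The crux is the right-hand side, where the SAV construction is exploited. Writing $V_{n}:=V(g_h(\hat{\phi}^{n},R^{n-1}))$, $\phi^{n}-V_{n}\hat{\phi}^{n}=-e_\phi^{n}+\hat{e}_\phi^{n}+(1-V_{n})\hat{\phi}^{n}$, and $\md_{1}\phi^{n}=\md_{1}\phi(t_{n})-\md_{1} e_\phi^{n}$, equation \eqref{ErrR_4} rearranges (adding and subtracting $\langle f(\phi(t_{n})),\md_{1}\phi^{n}\rangle$) into
\begin{equation*}
	\md_{1} e_R^{n}=\big\langle V_{n}f(\hat{\phi}^{n})-f(\phi(t_{n})),\md_{1}\phi^{n}\big\rangle-\big\langle f(\phi(t_{n})),\md_{1} e_\phi^{n}\big\rangle-\kappa\big\langle \phi^{n}-V_{n}\hat{\phi}^{n},\md_{1}\phi^{n}\big\rangle+T_{3}^{n},
\end{equation*}
so that every factor multiplying $\md_{1}\phi^{n}$ is $\mo(\varsigma_{n}+\|e_\phi^{n}\|)$ with $\varsigma_{n}=\|\hat{e}_\phi^{n}\|+|e_R^{n-1}|+\tau_{n}+h^{2}$; likewise the nonlinear/stabilization terms in \eqref{ErrR_3} split into $f(\phi(t_{n}))-f(\hat{\phi}^{n})$, $(1-V_{n})f(\hat{\phi}^{n})$, and $-\kappa e_\phi^{n}+\kappa\hat{e}_\phi^{n}+\kappa(1-V_{n})\hat{\phi}^{n}$. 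I would then bound $\|f(\phi(t_{n}))-f(\hat{\phi}^{n})\|\le \overline{K}\|\hat{e}_\phi^{n}\|$ by the mean value theorem and \eqref{Bound_expf}, all $(1-V_{n})$-factors by the \emph{linear} estimate $|1-V_{n}|\le C\varsigma_{n}$ of Lemma \ref{lem:estig}, the $-\kappa e_\phi^{n}$ part by $-\kappa\langle e_\phi^{n},\nabla_{\tau} e_\phi^{n}\rangle=-\tfrac{\kappa}{2}\|e_\phi^{n}\|^{2}+\tfrac{\kappa}{2}\|e_\phi^{n-1}\|^{2}-\tfrac{\kappa}{2}\|\nabla_{\tau} e_\phi^{n}\|^{2}$, and the truncations by \eqref{ErrT_21}. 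Each remaining product is handled by Young's inequality with the factor $\tau_{n}$ placed so that every $\|\nabla_{\tau} e_\phi^{n}\|^{2}$ that appears (including those from $\md_{1} e_\phi^{n}=\tau_{n}^{-1}\nabla_{\tau} e_\phi^{n}$ in the $R$-equation, which produce $|e_R^{n}|\,\|\nabla_{\tau} e_\phi^{n}\|$-type terms) carries a coefficient $\le \tfrac{\delta}{32\tau_{n}}$ after choosing the Young constants small and imposing $\tau_{n}\le\hat{\tau}$, while its companion carries the factor $\tau_{n}$; in particular $C\tau_{n}\|e_\phi^{n}\|^{2}$-terms are reduced to $C\tau_{n}\|e_\phi^{n-1}\|^{2}$ plus an absorbable $\|\nabla_{\tau} e_\phi^{n}\|^{2}$ via $\|e_\phi^{n}\|^{2}\le 2\|e_\phi^{n-1}\|^{2}+2\|\nabla_{\tau} e_\phi^{n}\|^{2}$.

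Collecting everything, using Lemma \ref{lem:estihat} ($\|\hat{e}_\phi^{n}\|^{2}\le C\|e_\phi^{n-1}\|^{2}+C\tau_{n}^{2}(\tau_{n}+h^{2})^{2}$) and $\varsigma_{n}^{2}\le C(\|\hat{e}_\phi^{n}\|^{2}+|e_R^{n-1}|^{2}+\tau_{n}^{2}+h^{4})$, and dropping nonnegative terms, one reaches the per-step inequality
\begin{equation*}
	\mathcal{W}^{n}\le(1+C\tau_{n})\mathcal{W}^{n-1}+C\tau_{n}\big(\tau_{n}^{2}+h^{4}+\|T_{2}^{n}\|^{2}\big),\qquad \mathcal{W}^{n}:=\big\langle G[\nabla_{\tau} e_\phi^{n}],1\big\rangle+\tfrac{\kappa}{4}\|e_\phi^{n}\|^{2}+\tfrac{\varepsilon^{2}}{2}\|\nabla_{h} e_\phi^{n}\|^{2}+|e_R^{n}|^{2},
\end{equation*}
with $\mathcal{W}^{0}=0$. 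Since $\|T_{2}^{1}\|^{2}\le C(\tau_{1}^{2}+h^{4})$ and $\tau_{1}\le\tau^{4/3}$ make the first step $\mo(\tau^{4}+h^{4})$, while $\sum_{n\ge 2}\tau_{n}\|T_{2}^{n}\|^{2}=\mo(\tau^{4}+h^{4})$ and $\sum_{n}\tau_{n}(\tau_{n}^{2}+h^{4})=\mo(\tau^{2}+h^{4})$, the discrete Gronwall inequality gives $\mathcal{W}^{n}=\mo(\tau^{2}+h^{4})$, i.e. \eqref{thmErrR:1}. For \eqref{thmErrR:2} I would bootstrap: \eqref{thmErrR:1} and Lemma \ref{lem:estihat} yield $\|e_\phi^{n-1}\|,|e_R^{n-1}|,\|\hat{e}_\phi^{n}\|=\mo(\tau+h^{2})$, hence $\varsigma_{n}=\mo(\tau+h^{2})$, so the \emph{quadratic} estimate $|1-V_{n}|^{2}\le C\varsigma_{n}^{4}=\mo(\tau^{4}+h^{4})$ of Lemma \ref{lem:estig} applies; re-running only the $\phi$-equation estimate (the $R$-equation is no longer needed, since $|e_R^{n-1}|$ enters $\varsigma_{n}$ merely as a known $\mo(\tau+h^{2})$ quantity), the inhomogeneity improves to $C\tau_{n}(\|T_{2}^{n}\|^{2}+\tau_{n}^{2}(\tau_{n}+h^{2})^{2}+\varsigma_{n}^{4})=C\tau_{n}\mo(\tau^{4}+h^{4})$ while $\|\hat{e}_\phi^{n}\|^{2}\le C\|e_\phi^{n-1}\|^{2}+\cdots$ keeps $\|e_\phi^{n-1}\|^{2}$ as a Gronwall term, and a second discrete Gronwall inequality gives $\langle G[\nabla_{\tau} e_\phi^{n}],1\rangle+\tfrac{\kappa}{4}\|e_\phi^{n}\|^{2}+\tfrac{\varepsilon^{2}}{2}\|\nabla_{h} e_\phi^{n}\|^{2}=\mo(\tau^{4}+h^{4})$.

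The main obstacle is the right-hand side bookkeeping: one must recognize the SAV cancellation---that the $\mo(1)$-size quantities $\langle f(\phi(t_{n})),\md_{1}\phi(t_{n})\rangle$ and $\md_{1}R^{n}$ combine so that only genuinely small terms and the single product $\langle f(\phi(t_{n})),\md_{1} e_\phi^{n}\rangle$ remain---then isolate the $(1-V_{n})$-pieces cleanly so Lemma \ref{lem:estig} applies with the right power of $\varsigma_{n}$, and finally distribute the $\tau_{n}$'s in Young's inequality so that all $\|\nabla_{\tau} e_\phi^{n}\|^{2}$ contributions are absorbed by the positive-definiteness remainder $\tfrac{\delta}{32\tau_{n}}\|\nabla_{\tau} e_\phi^{n}\|^{2}$ of Lemma \ref{lem:BDF2_P1} while the coefficient of $|e_R^{n}|^{2}$ stays proportional to $\tau_{n}$; the two-stage bootstrap from $\mo(\tau^{2}+h^{4})$ to $\mo(\tau^{4}+h^{4})$ via the quadratic bound of Lemma \ref{lem:estig} is the other essential idea.
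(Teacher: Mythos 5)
Your proposal is correct and follows essentially the same route as the paper's proof: testing the $\phi$-error equation with $\nabla_{\tau}e_\phi^{n}$ and the $R$-error equation with $2\tau_{n}e_R^{n}$, invoking the positive definiteness of the BDF2 kernels (Lemma \ref{lem:BDF2_P1}), the decomposition of $\phi^{n}-V_{n}\hat{\phi}^{n}$ together with the linear bound of Lemma \ref{lem:estig}, absorbing all $\|\nabla_{\tau}e_\phi^{n}\|^{2}$ contributions into the $\tfrac{\delta}{32\tau_{n}}$ remainder, treating $n=1$ separately under $\tau_{1}\le\tau^{4/3}$, closing with discrete Gr\"onwall, and then bootstrapping to \eqref{thmErrR:2} via the quadratic bound of Lemma \ref{lem:estig} applied only to the $\phi$-equation. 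The only differences are cosmetic (you combine the two tested equations in one step and write the implicit $|e_R^{n}|^{2}$-absorption as an explicit $(1+C\tau_{n})$ recursion), so no gap to report.
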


\begin{remark}
	Note that in the proof of Theorem \ref{thm:ErrR}, only the estimate of $ \| T_{2}^{n} \| $ is utilized; see \eqref{ErrR_15}--\eqref{ErrR_15_2}. Therefore, by applying the well-known Bramble-Hilbert Lemma \cite{NM_1971_Hilbert}, the spatial regularity assumption in Theorem \ref{thm:ErrR} can be relaxed from $ W^{4,\infty} ( \Omega ) $ to $ H^{4}(\Omega) $.
\end{remark}

In Theorem \ref{thm:ErrR}, we have derived an $ H^{1} $-norm  error estimate for $ e_{\phi}^{n} $. However, this cannot assure an optimal $L^{\infty}$-norm error estimate for the Allen--Cahn equation \eqref{Model:tAC} in multi-dimensional spaces. An important lemma for the discrete $ L^{\infty}$-norm error analysis is listed below, which can be proved by using the technique of the discrete complementary convolution kernels, see Refs. \cite{MOC_2023_Ju,SINUM_2020_Liao}.

\begin{lemma}[\cite{MOC_2023_Ju}]\label{lem:Gron}
	Let $\zeta>0$ and $\lambda \in(0,1)$ be two constants. For any non-negative sequences $\{v^k\}_{k=1}^N$ and $\{w^k\}_{k=0}^N$ such that
	$$
	\sum_{k=1}^n d_{n-k}^{(n)} \nabla_\tau w^k \leq \zeta \sum_{k=1}^{n-1} \lambda^{n-k-1} w^k + v^n \quad {\rm for } \  1 \leq n \leq N,
	$$
	where the discrete kernels $\{d_k^{(n)}\}$ are defined in \eqref{Formula:new_ker_1}, it holds
	$$
	w^n \leq \exp \Big(\frac{\zeta t_n}{1-\lambda}\Big)\Big(w^0+\sum_{k=1}^n \frac{v^k}{b_0^{(k)}}\Big) \quad {\rm for } \  1 \leq n \leq N .
	$$
\end{lemma}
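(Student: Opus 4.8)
The plan is to invert the variable-step convolution on the left-hand side by the discrete complementary convolution (DCC) kernel technique, and then to close the resulting linear recursion by a discrete Gr\"onwall argument. First I would introduce kernels $\{p_{n-j}^{(n)}\}_{j=1}^{n}$ complementary to $\{d_{j-k}^{(j)}\}$, defined by the triangular system $\sum_{j=k}^{n} p_{n-j}^{(n)} d_{j-k}^{(j)} = 1$ for $1\le k\le n$. Because the recombined kernels obey the geometric law $d_{j-k}^{(j)} = \eta^{j-k} b_0^{(j)}$ (iterate \eqref{Formula:new_ker_2}, recalling $d_0^{(j)}=b_0^{(j)}$), this system solves explicitly: writing $q_j := p_{n-j}^{(n)} b_0^{(j)}$, the identity $\sum_{j=k}^{n} q_j \eta^{j-k} = 1$ gives, upon subtracting the equations for $k$ and $k+1$, $q_n = 1$ and $q_j = 1-\eta$ for $1\le j\le n-1$. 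Hence $p_0^{(n)} = 1/b_0^{(n)}$ and $p_{n-j}^{(n)} = (1-\eta)/b_0^{(j)}$ for $j\le n-1$; in particular $0\le p_{n-j}^{(n)}\le 1/b_0^{(j)}\le\tau_j$, the last bound using $(1+r_j)/(1+2r_j)\le 1$, and non-negativity holding because $\eta\in(0,1)$ (a consequence of $0<r_k<1+\sqrt{2}$ via \eqref{Formula:eta}).

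Next I would multiply the hypothesized inequality written at level $j$ by $p_{n-j}^{(n)}\ge0$, sum over $1\le j\le n$, and exchange the order of summation on the left; the defining property of the complementary kernels telescopes it,
\begin{equation*}
\begin{aligned}
\sum_{j=1}^{n} p_{n-j}^{(n)}\sum_{k=1}^{j} d_{j-k}^{(j)}\nabla_\tau w^k
&= \sum_{k=1}^{n}\Big(\sum_{j=k}^{n} p_{n-j}^{(n)} d_{j-k}^{(j)}\Big)\nabla_\tau w^k \\
&= \sum_{k=1}^{n}\nabla_\tau w^k = w^n - w^0 ,
\end{aligned}
\end{equation*}
so that
\begin{equation*}
w^n \le w^0 + \sum_{j=1}^{n} p_{n-j}^{(n)} v^j + \zeta\sum_{j=1}^{n} p_{n-j}^{(n)}\sum_{k=1}^{j-1}\lambda^{j-k-1} w^k .
\end{equation*}
Since $v^j\ge 0$ and $p_{n-j}^{(n)}\le 1/b_0^{(j)}$, the middle sum is at most $\sum_{j=1}^{n} v^j/b_0^{(j)}$; set $\Phi^n := w^0 + \sum_{j=1}^{n} v^j/b_0^{(j)}$, which is non-decreasing in $n$ and coincides with the bracketed factor in the claimed bound.

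Finally I would close the estimate by induction on $n$, with $n=0$ and $n=1$ immediate (the inner sum is empty when $n=1$, so $w^1\le\Phi^1$). Assuming $w^k\le\Phi^k\exp(\zeta t_k/(1-\lambda))$ for all $k<n$, insert this into the double sum and use $\Phi^k\le\Phi^n$, $\sum_{k=1}^{j-1}\lambda^{j-k-1}\le 1/(1-\lambda)$, and $t_k\le t_{j-1}$ for $k\le j-1$, to obtain, with $a:=\zeta/(1-\lambda)$,
\begin{equation*}
w^n \le \Phi^n\Big(1 + a\sum_{j=1}^{n} p_{n-j}^{(n)}\, e^{a t_{j-1}}\Big) .
\end{equation*}
Since $p_{n-j}^{(n)}\le\tau_j$ and $s\mapsto e^{as}$ is increasing, $a p_{n-j}^{(n)} e^{a t_{j-1}}\le a\tau_j e^{a t_{j-1}}\le \int_{t_{j-1}}^{t_j} a e^{as}\,ds = e^{a t_j}-e^{a t_{j-1}}$; summing over $j$ telescopes to $a\sum_{j=1}^{n} p_{n-j}^{(n)} e^{a t_{j-1}}\le e^{a t_n}-1$, whence $w^n\le\Phi^n e^{a t_n}$, which is exactly the assertion.

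The step I expect to be the main obstacle is the final one: a crude substitution such as $w^k\le\Phi^n\exp(\zeta t_{n-1}/(1-\lambda))$ is too lossy---it produces an inequality of the shape $1+xe^x\le e^x$, which is false---so the induction must retain the level-dependent factor $e^{a t_{j-1}}$ inside the sum and only then invoke the Riemann-sum comparison against $\int e^{as}\,ds$; this is what turns the accumulated step sizes $\sum_j\tau_j$ into $t_n$ with the sharp constant $1/(1-\lambda)$. By contrast, the non-negativity and explicit form of the complementary kernels---usually the delicate part of DCC arguments---are essentially free here thanks to the geometric structure \eqref{Formula:new_ker_2}.
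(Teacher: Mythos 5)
Your overall strategy --- inverting the variable-step convolution with discrete complementary convolution kernels and closing the resulting recursion by a telescoping Riemann-sum comparison --- is exactly the route the paper points to (the lemma is quoted from the cited references rather than proved in the text), and your final induction, including the comparison $a\tau_j e^{at_{j-1}}\le e^{at_j}-e^{at_{j-1}}$, is correct. However, there is a genuine error in the middle: the recombined kernels are \emph{not} geometric from $d_0^{(n)}$. By \eqref{Formula:new_ker_1} one has $d_1^{(n)}=\eta b_0^{(n)}+b_1^{(n)}$ with $b_1^{(n)}<0$, so $d_{j-k}^{(j)}=\eta^{j-k-1}\bigl(\eta b_0^{(j)}+b_1^{(j)}\bigr)\neq\eta^{j-k}b_0^{(j)}$ whenever $j-k\ge 1$ and $j\ge 2$; the relation \eqref{Formula:new_ker_2} really only holds for $2\le k\le n$ (the stated range ``$1\le k\le n$'' is a slip in the paper), and iterating it down to $k=1$ is what led you astray. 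Consequently your explicit solution $q_j=1-\eta$ is false: already $q_{n-1}=1-\eta-b_1^{(n)}/b_0^{(n)}=1-\eta+r_n^2/(1+2r_n)$, which lies strictly between $1-\eta$ and $1$ under \eqref{Formula:eta} but is not $1-\eta$.

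The two properties you actually use --- $p_{n-j}^{(n)}\ge 0$ and $p_{n-j}^{(n)}\le 1/b_0^{(j)}\le\tau_j$ --- are nevertheless true, but they must be obtained the standard way rather than read off a closed form. Subtracting the defining identities for $k$ and $k+1$ gives $p_{n-k}^{(n)}d_0^{(k)}=\sum_{j=k+1}^{n}p_{n-j}^{(n)}\bigl(d_{j-k-1}^{(j)}-d_{j-k}^{(j)}\bigr)$; since the kernels $d$ are non-negative and decreasing under \eqref{Formula:eta} (this is where $0<r_k<1+\sqrt{2}$ and the choice of $\eta$ genuinely enter, not merely through $\eta\in(0,1)$), induction from $p_0^{(n)}=1/b_0^{(n)}>0$ yields non-negativity. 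Then $p_{n-k}^{(n)}d_0^{(k)}=1-\sum_{j=k+1}^{n}p_{n-j}^{(n)}d_{j-k}^{(j)}\le 1$ gives the upper bound. With these two facts substituted for your incorrect explicit values, the telescoping identity, the bound by $\sum_k v^k/b_0^{(k)}$, and the concluding induction all go through verbatim, so the proof is repairable without changing its architecture.
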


\begin{theorem}[Error analysis for $ e_{\phi}^{n} $ in the $ L^{\infty} $-norm]\label{thm:ErrPhi} 
	Assume the conditions in Theorem \ref{thm:ErrR} hold, 
	then we have
	\begin{equation}\label{thmErrPhi:1}
		\| e_\phi^{n} \|_{\infty} =\mo ( \tau^2 + h^2 ).
	\end{equation}
\end{theorem}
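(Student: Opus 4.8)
The plan is to bootstrap the $L^{\infty}$-norm estimate \eqref{thmErrPhi:1} from the $H^{1}$-norm estimate \eqref{thmErrR:2} established in Theorem \ref{thm:ErrR}, using the convolution-kernel Gr\"onwall inequality of Lemma \ref{lem:Gron} together with the kernel recombination identity \eqref{Formula:new_BDF2}. First I would rewrite the error equation \eqref{ErrR_3} for $e_\phi^n$ by applying the recombined BDF2 formula: setting $\chi^0 := e_\phi^0 = 0$ and $\chi^k := e_\phi^k - \eta e_\phi^{k-1}$ for $k\ge 1$ (with $\eta = \eta_*$ as in Remark \ref{rem:eta}), I get $\md_2 e_\phi^n = \sum_{k=1}^n d_{n-k}^{(n)}\nabla_\tau \chi^k + d_n^{(n)}\chi^0$. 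Testing \eqref{ErrR_3} against $e_\phi^n$ in the discrete $L^\infty$ sense — more precisely, applying the operator $\mathcal{G}_h^n = (d_0^{(n)}+\kappa)I - \varepsilon^2\Delta_h$ form as in \eqref{MBP:f2} — and invoking Lemma \ref{lem:MBP_left} and the bound \eqref{boundQ} on $\|Q_k^n\|_\infty$ exactly as in the proof of Theorem \ref{thm:MBP_2}, I expect an inequality of the shape
\begin{equation*}
	\sum_{k=1}^{n} d_{n-k}^{(n)}\nabla_\tau \|e_\phi^k\|_\infty \;\le\; C\sum_{k=1}^{n-1}\eta^{\,n-k-1}\|e_\phi^k\|_\infty \;+\; C\|\hat e_\phi^n\|_\infty \;+\; \|T_2^n\|_\infty,
\end{equation*}
where the first right-hand term collects the contributions of the recombined kernel differences $d_{k-1}^{(n)}-d_k^{(n)}$ acting on the lower-level errors, the second comes from the nonlinear terms $f(\phi(t_n)) - V(g_h)f(\hat\phi^n)$ and the stabilization term $\kappa(\phi^n - V(g_h)\hat\phi^n)$ after using the Lipschitz bound \eqref{Bound_expf}, Lemma \ref{lem:estig} to control $|V(g_h)-1|$, and Lemma \ref{lem:MBP_right}, and the third is the truncation error \eqref{ErrT_21}.

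Next I would eliminate $\|\hat e_\phi^n\|_\infty$ using Lemma \ref{lem:estihat}, which gives $\|\hat e_\phi^n\|_\infty \le C(\|e_\phi^{n-1}\|_\infty + \tau_n(\tau_n+h^2))$; the term $\|e_\phi^{n-1}\|_\infty$ is absorbed into the $\sum \eta^{n-k-1}\|e_\phi^k\|_\infty$ sum, and the rest joins the data term. This puts the inequality precisely into the hypothesis form of Lemma \ref{lem:Gron} with $\zeta = C$, $\lambda = \eta \in (0,1)$, $w^k = \|e_\phi^k\|_\infty$, and $v^n = C\|T_2^n\|_\infty + C\tau_n(\tau_n+h^2) + (\text{contribution already controlled in }H^1)$. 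Here I will need to feed in the $H^1$-norm estimate \eqref{thmErrR:2} — which in a bounded 2D/3D setting gives $\|e_\phi^n\|_\infty$-type control only after a discrete Sobolev/inverse inequality, so more cleanly I use \eqref{thmErrR:2} to bound whatever residual $L^2$-type quantities appear when estimating the nonlinear differences in $L^\infty$ (e.g. the term $\kappa(1 - V(g_h))\hat\phi^n$ whose size is $\mathcal{O}(\varsigma_n^2)$ by Lemma \ref{lem:estig}, and $\varsigma_n$ itself is $\mathcal{O}(\tau+h^2)$ once \eqref{thmErrR:2} and Lemma \ref{lem:estihat} are in hand). Then Lemma \ref{lem:Gron} yields
\begin{equation*}
	\|e_\phi^n\|_\infty \;\le\; \exp\!\Big(\tfrac{C t_n}{1-\eta}\Big)\Big(\|e_\phi^0\|_\infty + \sum_{k=1}^n \frac{v^k}{b_0^{(k)}}\Big),
\end{equation*}
and since $e_\phi^0 = 0$, $b_0^{(k)} \gtrsim \tau_k^{-1}$, and $\sum_k v^k/b_0^{(k)} \lesssim \sum_k \tau_k\big((\tau_{k-1}+\tau_k)^2 + h^2\big) = \mathcal{O}(\tau^2 + h^2)$ — using $\sum_k\tau_k = T$ and the standard step-ratio bookkeeping — I obtain $\|e_\phi^n\|_\infty = \mathcal{O}(\tau^2+h^2)$, noting that the spatial order degrades from $h^4$ in \eqref{thmErrR:2} to $h^2$ because the truncation error $\|T_2^n\|_\infty$ carries only $\mathcal{O}(h^2)$.

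The main obstacle I anticipate is handling the $\tau_1$-level term cleanly: since $\|T_2^1\|_\infty = \mathcal{O}(\tau_1 + h^2)$ is only first-order in time, its contribution $\tau_1\|T_2^1\|_\infty/1 = \mathcal{O}(\tau_1^2 + \tau_1 h^2)$ must be shown to be $\mathcal{O}(\tau^2+h^2)$, which is exactly why the hypothesis $\tau_1 \le \tau^{4/3}$ is imposed (giving $\tau_1^2 \le \tau^{8/3} = o(\tau^2)$) — this needs to be tracked carefully rather than glossed over. A secondary subtlety is that the estimate \eqref{boundQ} and hence the $\|Q_k^n\|_\infty$ control requires the time-step restriction \eqref{Condition:tau1}, so one must verify that the first-step BDF1 contribution and the recombination with $\eta = \eta_*$ are compatible with $\tau_n \le \hat\tau$; but this is inherited from the hypotheses of Theorem \ref{thm:ErrR} and Remark \ref{rem:eta}, so it is bookkeeping rather than a genuine difficulty. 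Everything else is a routine assembly of the pieces already proved in Lemmas \ref{lem:estig}, \ref{lem:estihat}, \ref{lem:Gron} and Theorem \ref{thm:ErrR}.
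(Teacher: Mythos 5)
Your proposal follows essentially the same route as the paper's proof: kernel recombination, the operator form \eqref{MBP:f2} with Lemmas \ref{lem:MBP_left} and \ref{lem:boundQ}, elimination of $\|\hat e_\phi^n\|_\infty$ via Lemma \ref{lem:estihat}, control of $|V(g_h(\hat\phi^n,R^{n-1}))-1|$ via Lemma \ref{lem:estig} fed by the $L^2$ bound from Theorem \ref{thm:ErrR} (no discrete Sobolev embedding is needed, exactly as your parenthetical suggests, since the $H^1$ estimate enters only through the scalar $\varsigma_n$), and finally the convolution Gr\"onwall Lemma \ref{lem:Gron}. One bookkeeping correction: the operator form \eqref{MBP:f2} controls the recombined variable, so the Gr\"onwall inequality must be run on $w^k=\|e_\psi^k\|_\infty=\|\chi^k\|_\infty$ rather than on $\|e_\phi^k\|_\infty$ as in your displayed inequality; the lower-level term $\|e_\phi^{n-1}\|_\infty$ coming from Lemma \ref{lem:estihat} is converted into $C\sum_{k=1}^{n-1}\eta^{n-k-1}\|e_\psi^k\|_\infty$ via \eqref{new_ker:f1}, and only after Lemma \ref{lem:Gron} yields $\|e_\psi^n\|_\infty=\mo(\tau^2+h^2)$ does one recover $\|e_\phi^n\|_\infty\le\sum_{k=1}^n\eta^{n-k}\|e_\psi^k\|_\infty=\mo(\tau^2+h^2)$. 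Also, your ``main obstacle'' is not actually an obstacle at this stage: since $\tau_1\le\tau$, the first-step contribution $\tau_1\|T_2^1\|_\infty=\mo(\tau_1^2+\tau_1h^2)$ is already $\mo(\tau^2+h^2)$; the hypothesis $\tau_1\le\tau^{4/3}$ is needed upstream in Theorem \ref{thm:ErrR} (to make the initial quantity $W^1\le C(\tau_1^3+h^4)$ of order $\tau^4+h^4$), and is simply inherited here.
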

\begin{proof}
As before, we define $ e_{\psi}^k := e_{\phi}^k - \eta e_{\phi}^{k-1} $ for $ k \geq 1 $ with $ e_{\psi}^0 := e_{\phi}^0 = 0$. It is easy to check that \eqref{new_ker:f1}--\eqref{Formula:new_BDF2} also hold for $ e_{\phi}^k $ and $ e_{\psi}^k $. Note that
$$
\phi^{n} - V ( g_h(\hat{\phi}^{n}, R^{n-1}) ) \hat{\phi}^{n} = \hat{e}_{\phi}^n - e_{\phi}^n + ( 1 - V ( g_h(\hat{\phi}^{n}, R^{n-1}) ) ) \hat{\phi}^{n},
$$
the error equation \eqref{ErrR_3} can be rewritten as
\begin{equation}\label{ErrPhi_2}
	\begin{aligned}
		\mg_h^{n} e_{\psi}^n  & =\sum^{n-1}_{k=1} Q^{(n)}_{n-k} e_{\psi}^k + f( \phi( t_{n} ) ) - V( g_h(\hat{\phi}^{n}, R^{n-1}) ) f(\hat{\phi}^{n}) + \kappa \hat{e}_{\phi}^n\\
		& \qquad   + \kappa ( 1 - V ( g_h(\hat{\phi}^{n}, R^{n-1}) ) ) \hat{\phi}^{n} + T_2^n.
	\end{aligned}
\end{equation}

It follows from Lemma \ref{lem:estihat} and Theorem \ref{thm:ErrR} that
$ \| \hat{e}_{\phi}^{n} \|  =\mo ( \tau^2 + h^2 )$, which together with Lemma \ref{lem:estig}  implies
\begin{equation*}
\begin{aligned}
	\big\vert V( g_h(\hat{\phi}^{n}, R^{n-1}) ) - 1 \big\vert \leq C( \| \hat{e}_\phi^{n} \| + \vert e_{R}^{n-1} \vert + \tau )^2 =\mo ( \tau^2 + h^4 ),
\end{aligned}
\end{equation*}
and thus
\begin{equation*}
		\| f( \phi( t_{n} ) ) - V( g_h(\hat{\phi}^{n}, R^{n-1}) ) f(\hat{\phi}^{n}) \|_{\infty} \leq \overline{K} \| \hat{e}_\phi^{n} \|_{\infty} + \mo (  \tau^2 + h^4 ).
\end{equation*}
Then, by applying Lemmas \ref{lem:MBP_left}, \ref{lem:MBP_BDF1}, \ref{lem:estihat} and the above estimates to \eqref{ErrPhi_2}, one obtain
\begin{equation*}
	\begin{aligned}
		d^{(n)}_{0} \| e_{\psi}^n \|_{\infty} 
		& \leq  \sum^{n-1}_{k=1} \| Q^{(n)}_{n-k} \|_{\infty} \| e_{\psi}^k \|_{\infty} + \| f( \phi( t_{n} ) ) - V( g_h(\hat{\phi}^{n}, R^{n-1}) ) f(\hat{\phi}^{n}) \|_{\infty} \\
		& \quad  + \kappa \| \hat{e}_\phi^{n} \|_{\infty} + \kappa \big\vert V( g_h(\hat{\phi}^{n}, R^{n-1}) ) - 1 \big\vert \beta + \| T_2^n \|_{\infty} \\
		& \leq \sum^{n-1}_{k=1} \| Q^{(n)}_{n-k}\|_{\infty} \| e_{\psi}^k \|_{\infty} + C ( \| e_\phi^{n-1} \|_{\infty} + \| T_2^n \|_{\infty} + \tau^2 + h^2 ).
	\end{aligned}
\end{equation*}
Then, under the temporal stepsize restriction in Lemma \ref{lem:boundQ}, we see
\begin{equation*}
	    	d^{(n)}_{0} \| e_{\psi}^n \|_{\infty} 
		\leq \sum^{n-1}_{k=1} \big( d_{n-k-1}^{(n)}-d_{n-k}^{(n)} - \kappa \eta^{n-k} \big) 
	    	\| e_{\psi}^k \|_{\infty} 
		+ C \big( \| e_\phi^{n-1} \|_{\infty} + \| T_2^n \|_{\infty} + \tau^2 + h^2 \big),
\end{equation*}
which leads to
\begin{equation*}
	\begin{aligned}
		\sum^n_{k=1} d_{n-k}^{(n)} \nabla_{\tau} \| e_{\psi}^k \|_{\infty} \leq C \big( \| e_\phi^{n-1} \|_{\infty} + \| T_2^n \|_{\infty} + \tau^2 + h^2 \big).
	\end{aligned}
\end{equation*}
And then, using the relationship between $e_\phi$ and $e_\psi$ in formula \eqref{new_ker:f1},  we have
\begin{equation*}\label{ErrPhi_3}
	\begin{aligned}
		\sum^n_{k=1} d_{n-k}^{(n)} \nabla_{\tau} \| e_{\psi}^k \|_{\infty} \leq C \sum^{n-1}_{k=1} \eta^{n-k-1} \| e_\psi^{k} \|_{\infty} + C ( \| T_2^n \|_{\infty} + \tau^2 + h^2 ).
	\end{aligned}
\end{equation*}
Thus, an application of  Lemma \ref{lem:Gron} leads to
\begin{equation*}
	\begin{aligned}
		\| e_{\psi}^n \|_{\infty} & \leq  \exp \Big(\frac{C t_n}{1-\eta}\Big) \sum_{k=1}^n \frac{1}{b_0^{(k)}} \left( \| T_2^k \|_{\infty} + \tau^2 + h^2  \right) \\
		& \leq C \Big( \tau_1 ( \tau + h^2  ) + \sum_{k=2}^n \tau_k ( \tau^2 + h^2  ) \Big) =\mo ( \tau^2 + h^2  ) ,  
	\end{aligned}
\end{equation*}
where \eqref{Def:DCK} together with the truncation errors in \eqref{ErrT_21}  are applied.

Finally, we use the substitution formula \eqref{new_ker:f1} again to see
\begin{equation*}\label{ErrPhi_4}
	\| e_{\phi}^n \|_{\infty} \leq \sum_{k=1}^{n} \eta^{n-k} \| e_\psi^k \|_{\infty} \leq C  ( \tau^2 + h^2  ) \sum^{ n }_{k=1} \eta^{n-k} =\mo( \tau^2 + h^2  ),
\end{equation*}
for $ 0 < \eta < 1 $. Thus, the conclusion \eqref{thmErrPhi:1} is proved.
\end{proof}

%%%%%%%%%%%%%%%%%%%%%%%%%%%%%%%%%%%%%
\section{Numerical experiments}\label{sec:NumTest}
%%%%%%%%%%%%%%%%%%%%%%%%%%%%%%%%%%%%%
This section is devoted to numerical tests of the variable-step BDF2-sESAV-I scheme \eqref{sch:2_1}. There are two types of commonly-used nonlinear functions $ f(\phi) $. One is determined by the \emph{double-well potential} \cite{SISC_2019_Akrivis}
\begin{equation}\label{poten:dw}
	F( \phi ) = \frac{1}{4} ( \phi^2 - 1 )^2,
\end{equation}
which gives $
f(\phi) = -F'( \phi ) = \phi - \phi^3 $. In this case, one has $ \beta = 1 $ and $ \| f' \|_{ C[-\beta,\beta] } = 2 $. The other one is given by the \emph{Flory--Huggins potential} \cite{NM_1992_Elliott}
\begin{equation}\label{poten:fh}
	F( \phi ) = \frac{\theta}{2} [ ( 1 + \phi ) \ln ( 1 + \phi ) + ( 1 - \phi ) \ln ( 1 - \phi ) ] - \frac{ \theta_{c} }{2} \phi^2
\end{equation}
with $ \theta_{c} > \theta > 0 $, i.e., $ f(\phi) = - F'( \phi ) = \frac{\theta}{2} \ln \frac{ 1 - \phi }{ 1 + \phi } + \theta_{c} \phi $. In the following tests, we set $ \theta = 0.8 $ and $ \theta_{c} = 1.6 $, which gives us $ \beta \approx 0.9575 $ and $ \| f' \|_{ C[-\beta,\beta] } \approx 8.02 $ \cite{SINUM_Ju_2022,JSC_Ju_2022}. In our modeling, the auxiliary piecewise polynomial funcitional \eqref{f_cut} is adopted as $V(\cdot)$ and the stabilization parameter $ \kappa = \| f' \|_{ C[-\beta,\beta] } $ is always set for both cases.

%%%%%%%%%%%%%%%%%%%%%%%%%%%%%%%%%%
\subsection{Temporal convergence}
%%%%%%%%%%%%%%%%%%%%%%%%%%%%%%%%%%
We test the temporal accuracy of the variable-step BDF2-sESAV-I scheme by setting $ \varepsilon^2 = 0.01 $ in \eqref{Model:tAC} and considering a smooth initial value
$
\phi_{\text {init}} ( x, y ) = 0.1 \sin x \sin y$  in $\Omega = (0,2\pi)^2.
$
Let the terminal time $ T = 1 $ and choose $ M = 256 $. Since there is no analytical solution available for this problem, we use the numerical solution yielded by the BDF2-SAV scheme \cite{JSC_Qiao_2023} with uniform temporal stepsize $ \tau = 2^{-12} $ as the reference solution. In this example, the random temporal grids are generated randomly by
$$
\tau_{k} := T \frac{\theta_{k}}{S}, \quad {\rm with } \  S = \sum^{N}_{k=1} \theta_{k}, 
$$
where $ \theta_{k} $ is randomly drawn from the uniform distribution on the interval $ ( 1/r_{\max}, 1 ) $ such that the adjacent temporal stepsize ratio $ r_{k} < r_{\max} = 2.4 $. For the double-well potential, the $L^{\infty}$-norm error of $ \phi $ and the error of $g_h(\phi,R)$ yielded by the BDF2-sESAV-I scheme with different time-stepping strategies, i.e., the uniform and random temporal stepsize, are presented in Fig. \ref{figEx1_1}. It is shown that the expected second- and first-order convergence rates are respectively achieved in all cases. Corresponding numerical results for the Flory--Huggins potential are plotted in Fig. \ref{figEx1_2}, and similar conclusions can also be observed.
\begin{figure}[!h]
	\vspace{-12pt}
	\centering
	% 非线性算法的结果
	\subfigure[error of $\phi$]
	{
		\includegraphics[width=0.45\textwidth]{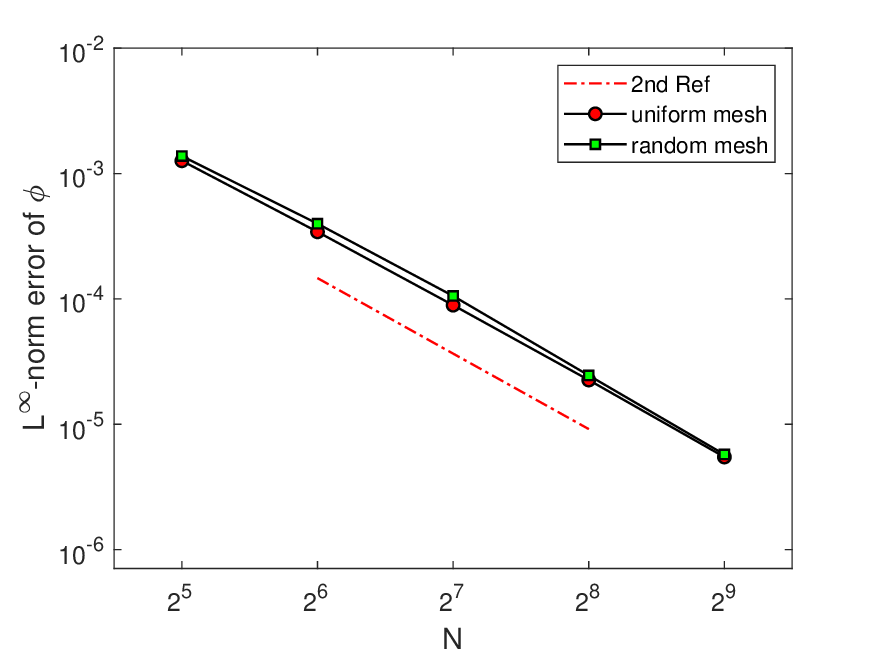}		\label{figEx1_1a}
	}%
	\subfigure[error of $g_h(\phi,R)$]
	{
		\includegraphics[width=0.45\textwidth]{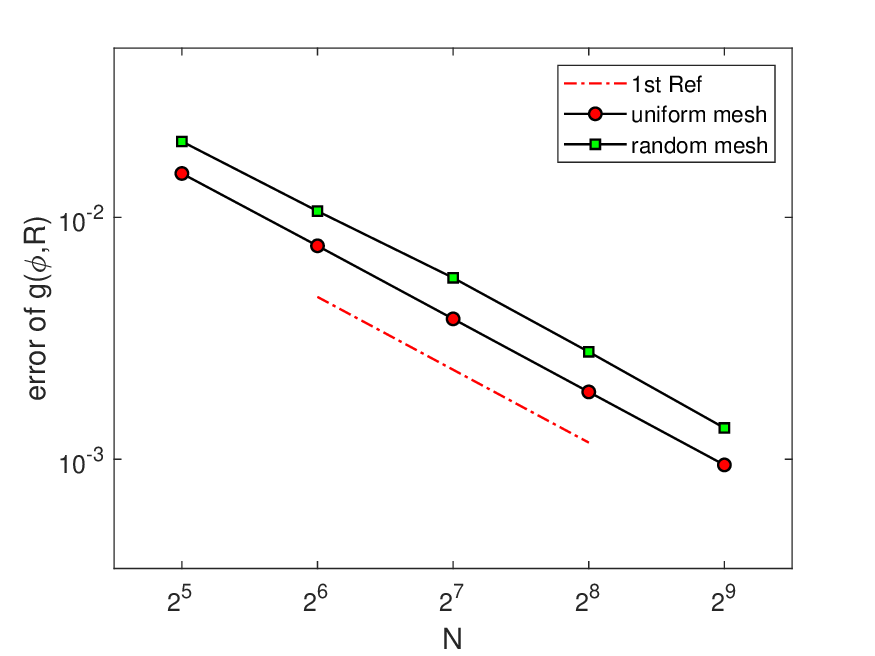}	\label{figEx1_1b}
	}%
	%	\vspace{-10pt}
	\setlength{\abovecaptionskip}{0.0cm} 
	\setlength{\belowcaptionskip}{0.0cm}
	\caption{The $L^{\infty}$-norm error of $ \phi $ (left) and the error of $g_h(\phi,R)$ (right)  generated by the (variable-step) BDF2-sESAV-I scheme: the double-well potential}	
	\label{figEx1_1}
\end{figure}
\begin{figure}[!h]
	\vspace{-12pt}
	\centering
	% 非线性算法的结果
	\subfigure[error of $\phi$]
	{
		\includegraphics[width=0.45\textwidth]{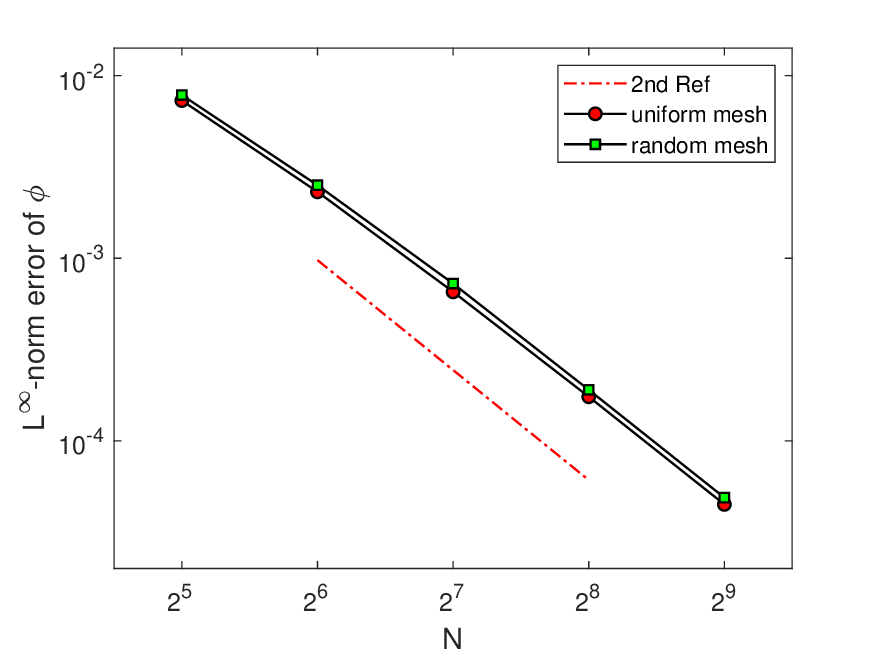}		\label{figEx1_2a}
	}%
	\subfigure[error of $g_h(\phi,R)$]
	{
		\includegraphics[width=0.45\textwidth]{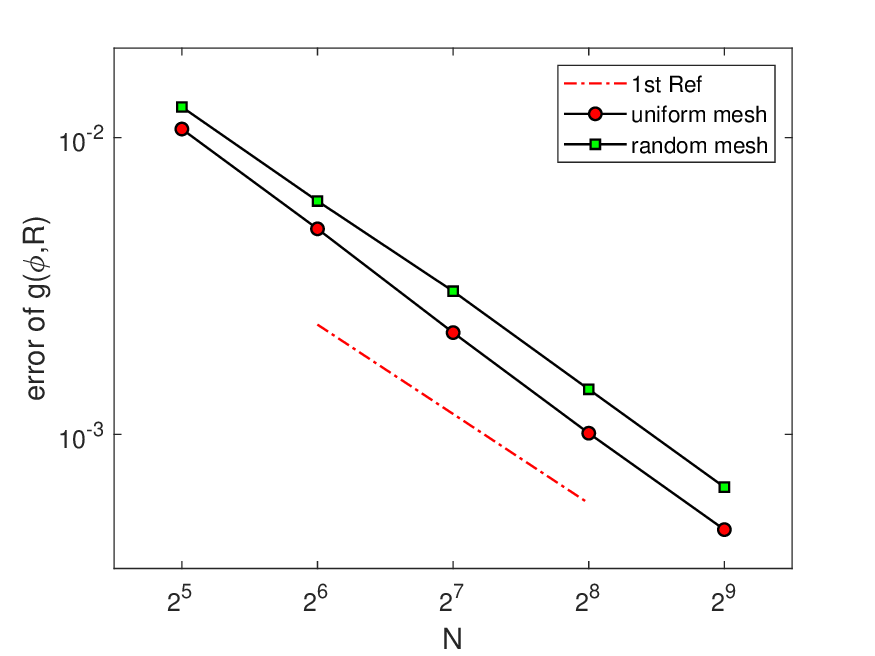}	\label{figEx1_2b}
	}%	
	\setlength{\abovecaptionskip}{0.0cm} 
	\setlength{\belowcaptionskip}{0.0cm}
	\caption{The $L^{\infty}$-norm error of $ \phi $ (left) and the error of $g_h(\phi,R)$ (right) generated by the (variable-step) BDF2-sESAV-I scheme: the Flory--Huggins potential}	
	\label{figEx1_2}
\end{figure}

%%%%%%%%%%%%%%%%%%%%%%%%%%%%%%%%%%%%%
\subsection{Preservation of energy dissipation law and MBP}\label{sec:ex2}
%%%%%%%%%%%%%%%%%%%%%%%%%%%%%%%%%%%%%%
In this subsection, we numerically verify the energy dissipation law and MBP of the proposed BDF2-sESAV scheme on both uniform and non-uniform temporal grids. Consider the grain coarsening dynamics governed by the Allen--Cahn equation \eqref{Model:tAC} with $ \varepsilon = 0.01 $ in $ \Omega=(0,1)^2 $. In this simulation, we take $ M=128 $ and the initial phase field is randomly generated between $ [-0.8,0.8] $, which is highly oscillating. 

\begin{figure}[!htbp]
	\vspace{-12pt}
	\centering
	% 非线性算法的结果
	\subfigure[fixed $ \tau = 0.4 $]
	{
		\includegraphics[width=0.45\textwidth]{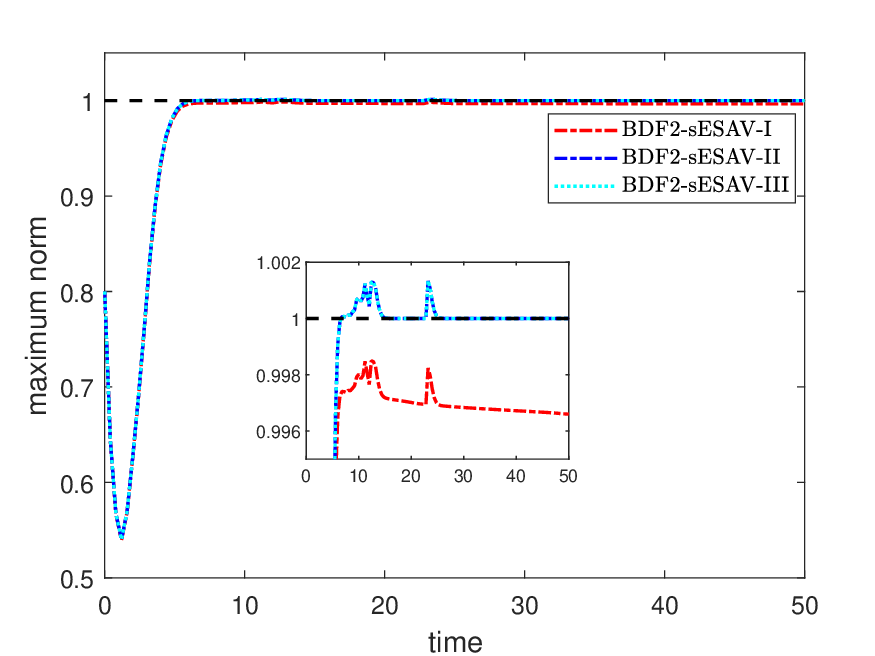}		\label{figEx2_1a}
	}%
	\subfigure[fixed $ \tau = 0.04 $]
	{
		\includegraphics[width=0.45\textwidth]{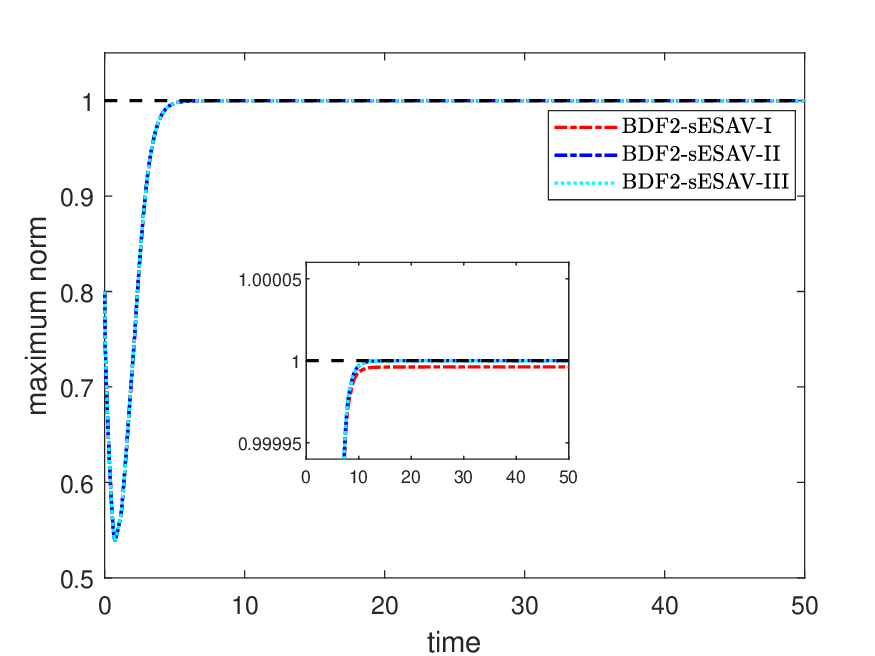}	\label{figEx2_1b}
	}%
	%	\vspace{-10pt}
	\setlength{\abovecaptionskip}{0.0cm} 
	\setlength{\belowcaptionskip}{0.0cm}
	\caption{Evolution of  simulated  solutions in maximum-norm computed by the BDF2-sESAV schemes with different stabilization terms: the double-well potential}	
	\label{figEx2_1}
\end{figure}
\begin{figure}[!ht]
	\vspace{-12pt}
	\centering
	% 非线性算法的结果
	\subfigure[fixed $ \tau = 0.1 $]
	{
		\includegraphics[width=0.45\textwidth]{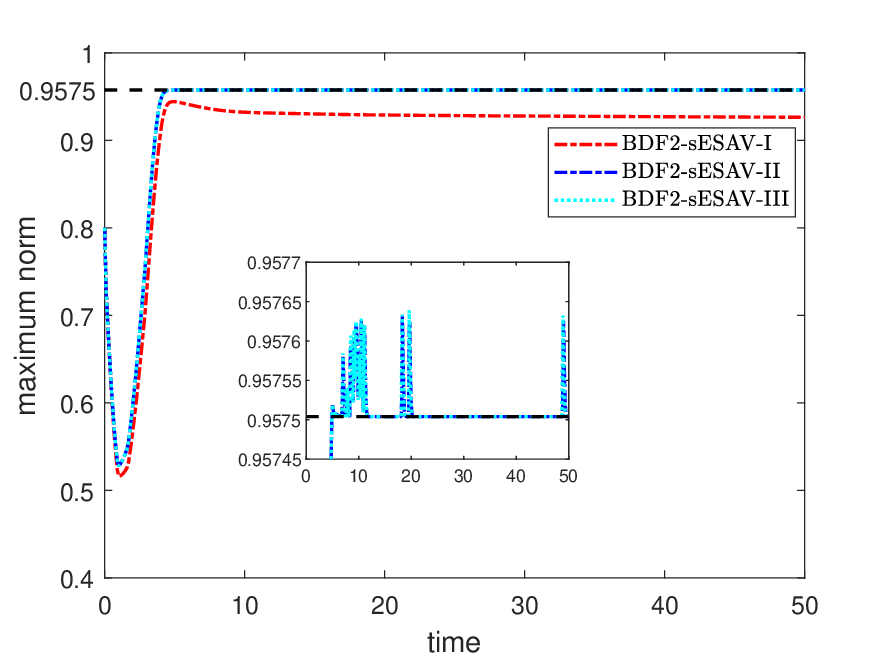}		\label{figEx2_2a}
	}%
	\subfigure[fixed $ \tau = 0.01 $]
	{
		\includegraphics[width=0.45\textwidth]{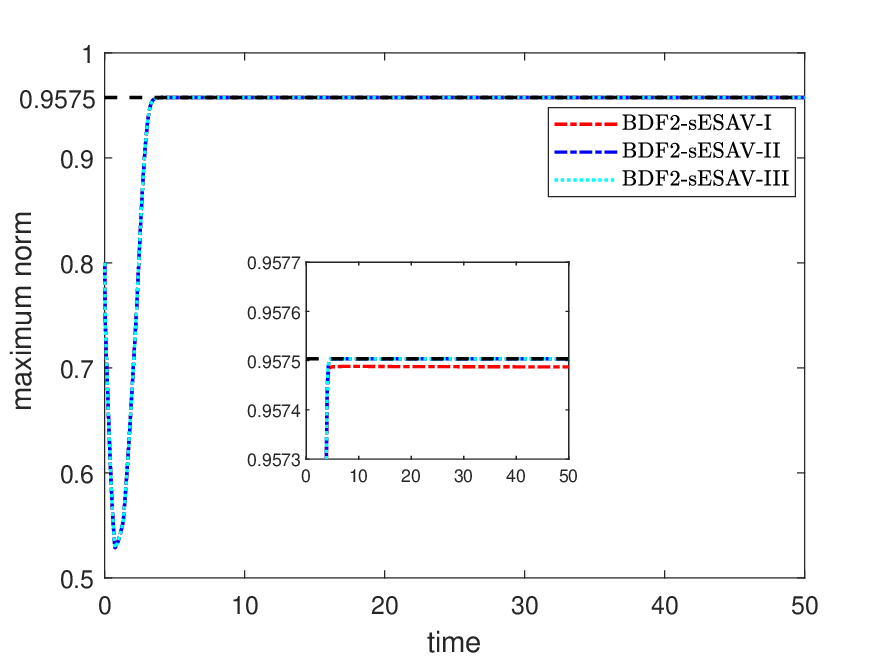}	\label{figEx2_2b}
	}%
	
	\setlength{\abovecaptionskip}{0.0cm} 
	\setlength{\belowcaptionskip}{0.0cm}
	\caption{Evolution of simulated solutions in maximum-norm  computed by the BDF2-sESAV schemes with different stabilization terms: the Flory--Huggins potential}	
	\label{figEx2_2}
	\vspace{-10pt}	
\end{figure}
To preserve the discrete MBP with uniform temporal grids, the temporal stepsize $ \tau \le 0.0585 $ for the double-well potential and $ \tau \le 0.0343 $ for the Flory--Huggins potential are required according to \eqref{Condition:tau1}. These conditions are also consistent with those of the fully-implicit \cite{SINUM_2020_Liao} and implicit-explicit \cite{MOC_2023_Ju} BDF2 schemes. As \eqref{Condition:tau1} is only a sufficient condition, therefore, in the following tests, we employ two different types of temporal girds (one satisfies the condition, while the other violates it) for each potential, i.e., $ \tau = 0.4 $, $ 0.04 $ for the double-well case and $ \tau = 0.1 $, $ 0.01 $ for the Flory--Huggins case. The coarsening dynamics using the BDF2-sESAV schemes with different stabilization terms (cf. \eqref{def:ST} and Remark \ref{def:other_S}) are tested. Fig. \ref{figEx2_1} displays the evolution of  simulated  solutions in maximum-norm  yielded by these three schemes for the double-well potential. It shows that the BDF2-sESAV-I scheme with the novel developed stabiliztion term can well preserve the MBP for both temporal grids, whereas the solutions of the BDF2-sESAV-II and BDF2-sESAV-III schemes obviously exceed the maximum bound $ \beta=1 $ when using the larger stepsize $ \tau = 0.4 $. This demonstrates the effectiveness and robustness of the proposed BDF2-sESAV-I scheme. Similar conclusions can also be drawn for the Flory--Huggins potential, as shown in Fig. \ref{figEx2_2}.

We next investigate the discrete energy dissipation law of the proposed BDF2-sESAV-I scheme with uniform temporal grids until $ T = 50 $, as well as the approximation of  the original energy by the modified energy. Numerical results for the double-well potential and the Flory--Huggins potential are depicted in Fig. \ref{figEx2_3} and Fig. \ref{figEx2_4}, respectively. As illustrated, both the original  and modified discrete energies decrease as time marching, and moreover, the modified energy approaches the original one as the temporal stepsize is reduced. Notably, the approximation rate of the modified energy to the original one is demonstrated to be first-order in time, as shown in Fig. \ref{figEx2_3c} and Fig. \ref{figEx2_4c}, which supports the theoretical results presented in Theorem \ref{thm:Err_Energy}.
\begin{figure}[!ht]
	\vspace{-8pt}
	\centering
	\subfigure[$\tau = 0.4$]
	{
		\includegraphics[width=0.33\textwidth]{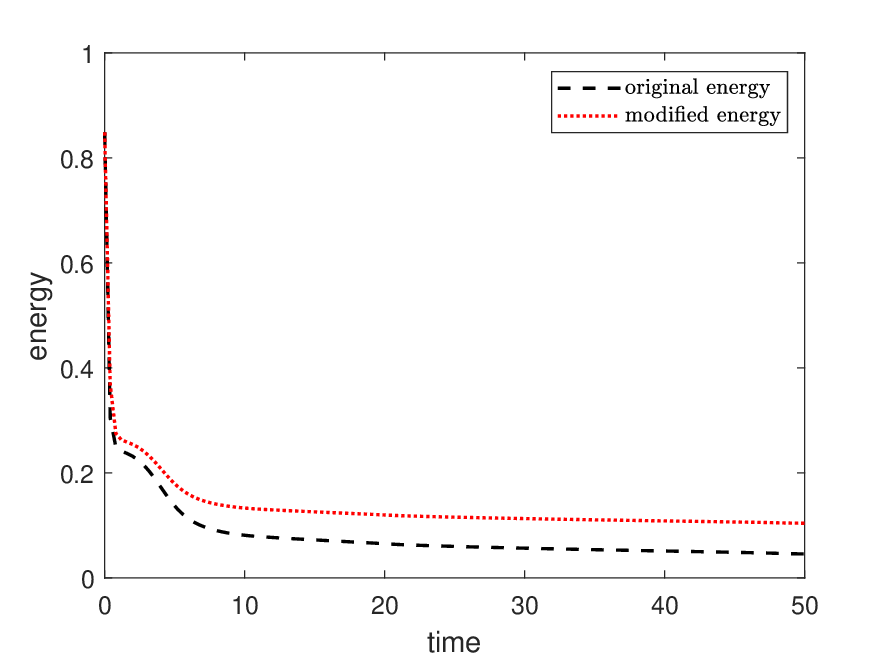}
		\label{figEx2_3a}
	}%
	\subfigure[$\tau = 0.04$]
	{
		\includegraphics[width=0.33\textwidth]{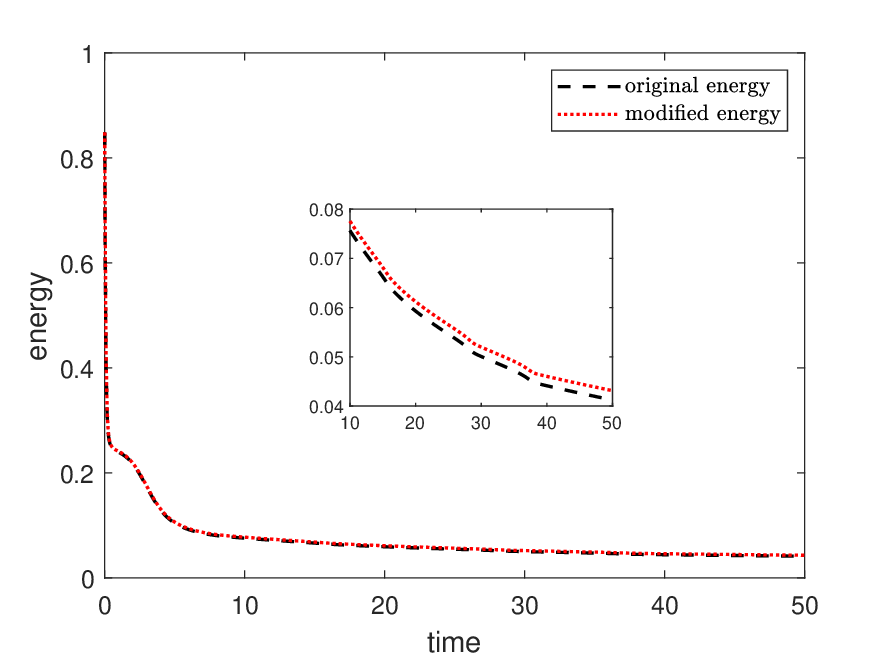}
		\label{figEx2_3b}
	}%
	\subfigure[$ \vert E_{h}{[\phi]} - \me_{h}{[\phi,R]} \vert $]
	{
		\includegraphics[width=0.33\textwidth]{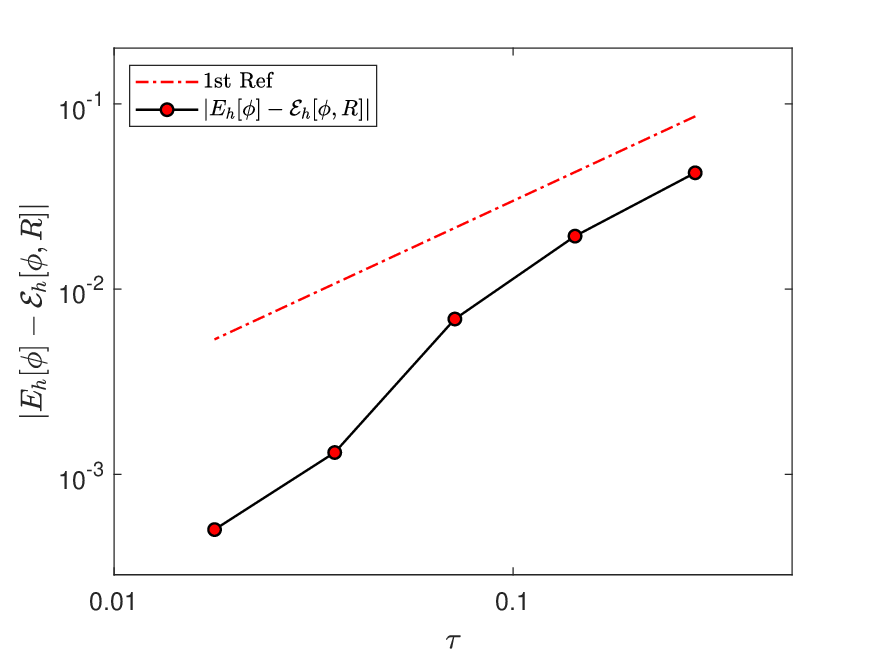}
		\label{figEx2_3c}
	}%
	\setlength{\abovecaptionskip}{0.0cm} 
	\setlength{\belowcaptionskip}{0.0cm}
	\caption{Evolutions of the original and modified energies with $\tau = 0.4$ (left) and $ \tau = 0.04 $ (middle), and the approximation rate of the modified energy to the original one (right) for the BDF2-sESAV-I scheme: the double-well potential}
	\label{figEx2_3}
\end{figure}
\begin{figure}[!ht]
	\vspace{-10pt}
	\centering
	\subfigure[$\tau = 0.1$]
	{
		\includegraphics[width=0.33\textwidth]{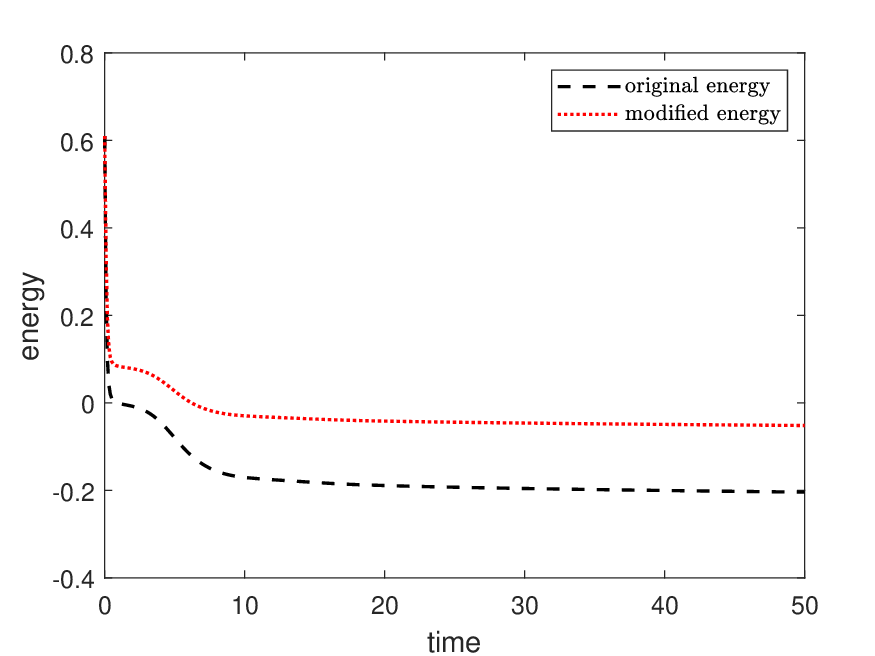}
		\label{figEx2_4a}
	}%
	\subfigure[$\tau = 0.01$]
	{
		\includegraphics[width=0.33\textwidth]{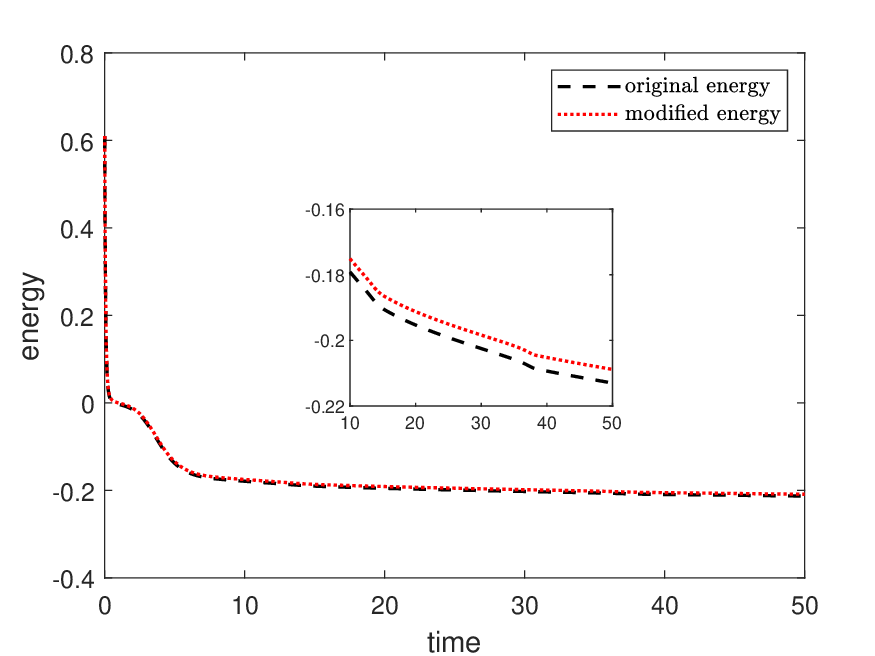}
		\label{figEx2_4b}
	}%
	\subfigure[$ \vert E_{h}{[\phi]} - \me_{h}{[\phi,R]} \vert $]
	{
		\includegraphics[width=0.33\textwidth]{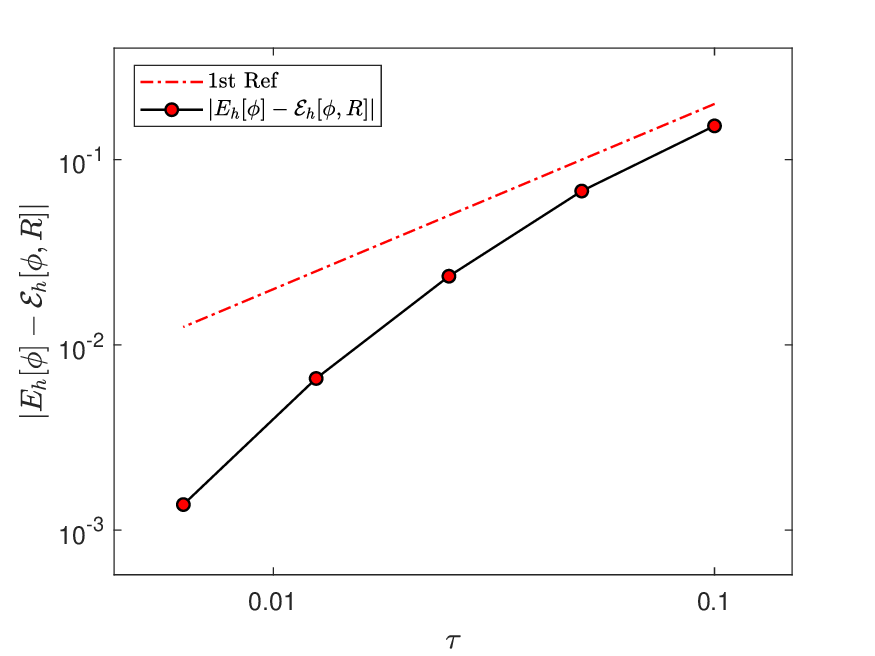}
		\label{figEx2_4c}
	}%
	\setlength{\abovecaptionskip}{0.0cm} 
	\setlength{\belowcaptionskip}{0.0cm}
	\caption{Evolutions of the original and modified energies wirh $\tau = 0.1$ (left) and $ \tau = 0.01 $ (middle), and the approximation rate of the modified energy to the original one (right) for the BDF2-sESAV-I scheme: the Flory--Huggins potential}
	\label{figEx2_4}
	\vspace{-10pt}
\end{figure}

%%%%%%%%%%%%%%%%%%%%%%%%%%%%%%%%%%%%%%%%
\subsection{Long-term simulation with adaptive  time-stepping}\label{sec:ex3}
%%%%%%%%%%%%%%%%%%%%%%%%%%%%%%%%%%%%%%%%
It is known that the coarsening dynamics process governed by the Allen--Cahn model usually requires a long time to reach the steady state  and its energy admits multiple time scales \cite{MOC_2023_Ju,SINUM_2020_Liao,SINUM_Ju_2022}. Thus, the adaptive time-stepping strategy is a heuristic and available method to improve the computational efficiency without sacrificing accuracy. We shall adopt the following time-stepping strategy \cite{JSC_Zhang_2022,SISC_2011_Qiao} based on the energy variation to adaptively select the next time steps
\begin{equation}\label{ex:adaptive}
	\begin{aligned}
		\tau_{n+1} = \min\bigg\{ \max \bigg\{ \tau_{\min} , \f{\tau_{\max}}{ \sqrt{ 1 + \alpha \vert \p_{\tau} E^{n} \vert^{2} } } \bigg\}, r_{\max} \tau_{n} \bigg\}
	\end{aligned}
\end{equation}
for the simulation of Example \ref{sec:ex2},
where $ \tau_{\min} $ and $ \tau_{\max} $ are predetermined minimum and maximum temporal stepsizes and $ \alpha $ is a tunable parameter. Here, we set $ r_{\max} = 2.4 $, $ \alpha = 10^{8} $, and choose $ ( \tau_{\min}, \tau_{\max} ) $ as $ ( 0.04, 0.4 ) $ for the double-well potential and $ (0.01, 0.1) $ for the Flory--Huggins potential, respectively. As shown above, the BDF2-sESAV-I scheme with stabilization \eqref{def:ST} is more reliable in preserving MBP compared to the other two stabilization methods, making it the preferred choice for the long-term coarsening dynamics simulation up to $ T = 2000 $. Though the discrete modified energy \eqref{def:dis_energy} is introduced as an approximation of the original one to facilitate the proof of energy dissipation law, we are more concerned about the original energy which reflects the real phase transition process, as done in \cite{SINUM_Ju_2022,JSC_Ju_2022}.

\begin{figure}[!htbp]
	\vspace{-10pt}
	\centering
	\subfigure[energy]
	{
		\includegraphics[width=0.33\textwidth]{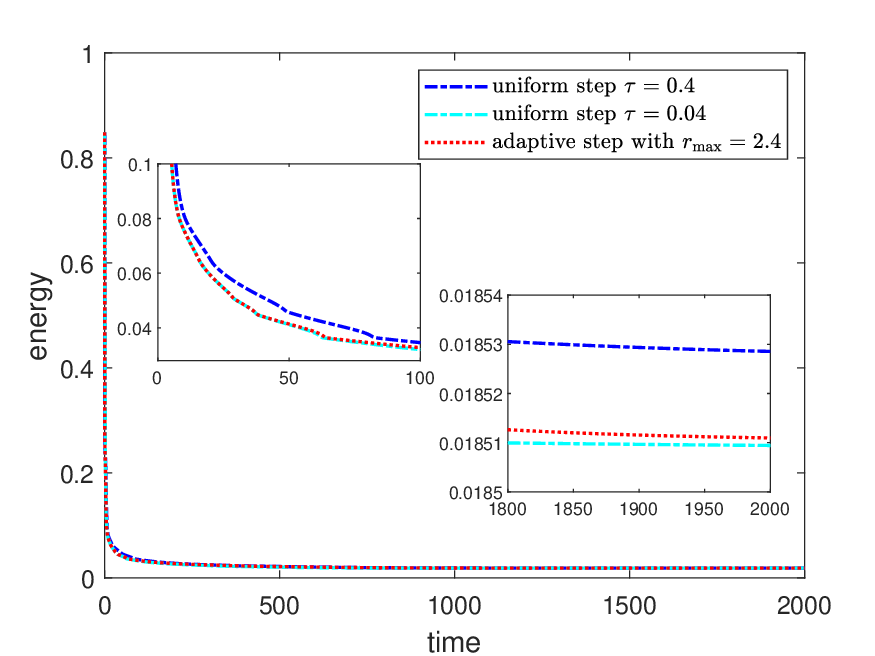}
		\label{figEx2_5a}
	}%
	\subfigure[maximum-norm of $\phi$]
	{
		\includegraphics[width=0.33\textwidth]{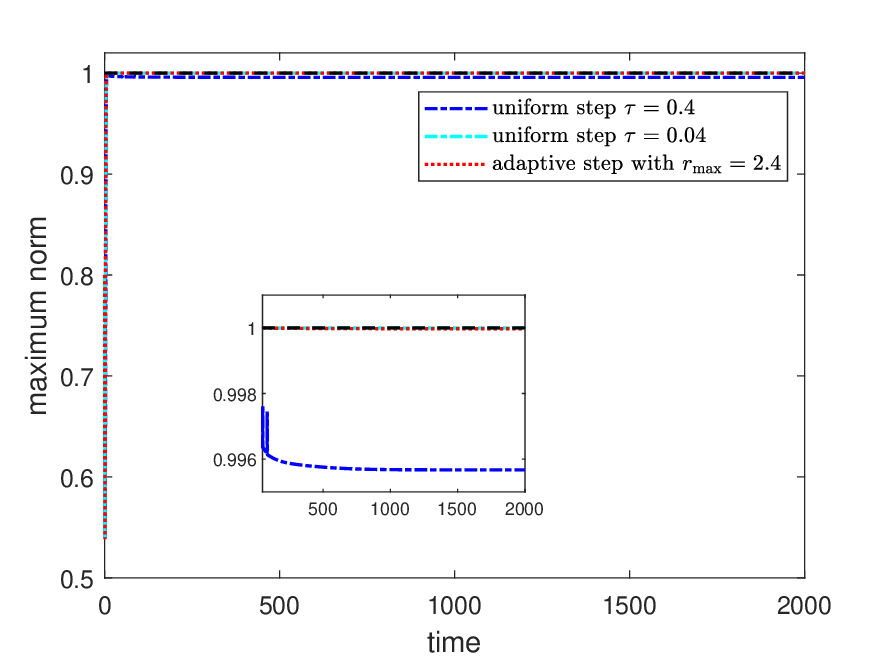}
		\label{figEx2_5b}
	}%
	\subfigure[temporal stepsize]
	{
		\includegraphics[width=0.33\textwidth]{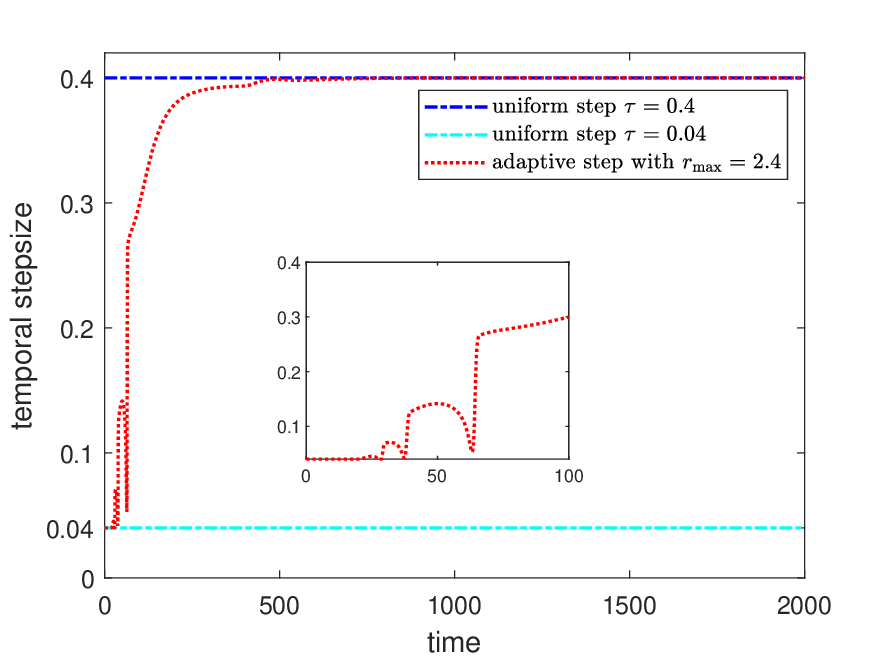}
		\label{figEx2_5c}
	}%
	\setlength{\abovecaptionskip}{0.0cm} 
	\setlength{\belowcaptionskip}{0.0cm}
	\caption{Evolutions of the original energy (left), the maximum-norm solution (middle), and the temporal stepsize (right) for the BDF2-sESAV-I scheme: the double-well potential}
	\label{figEx2_5}
		\vspace{-6pt}
\end{figure}
\begin{figure}[!htbp]
	\vspace{-10pt}
	\centering
	\subfigure[energy]
	{
		\includegraphics[width=0.33\textwidth]{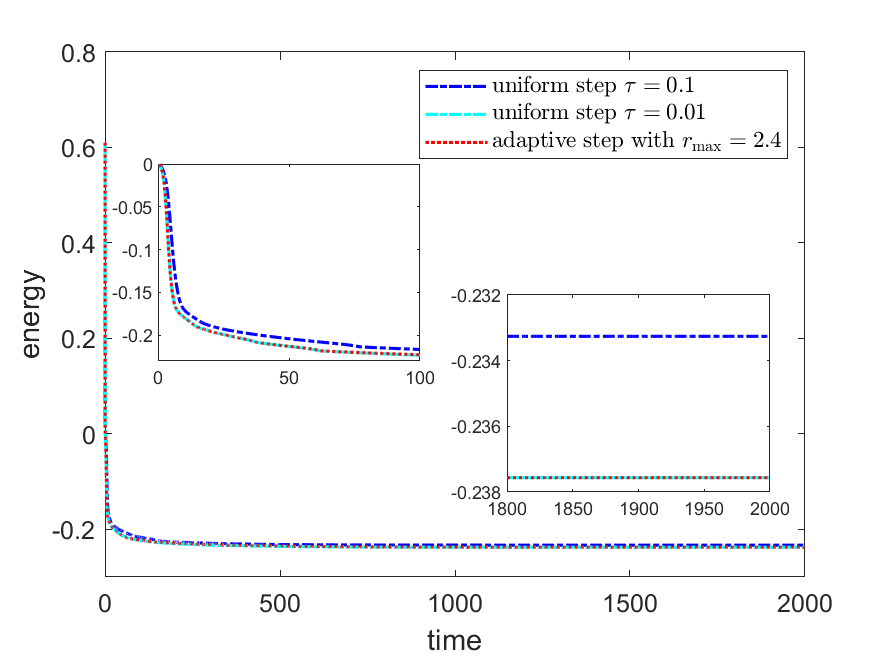}
		\label{figEx2_6a}
	}%
	\subfigure[maximum-norm of $\phi$]
	{
		\includegraphics[width=0.33\textwidth]{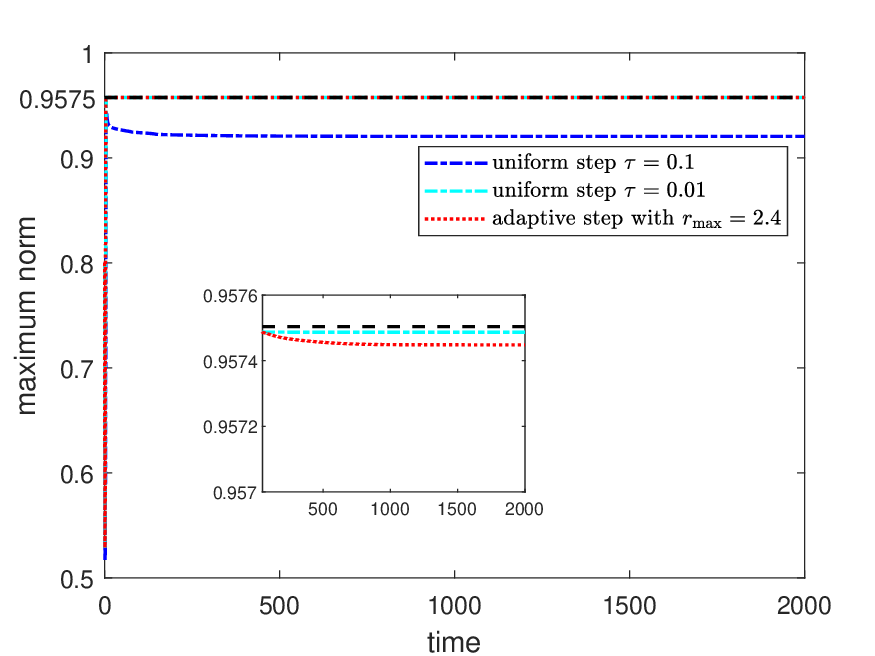}
		\label{figEx2_6b}
	}%
	\subfigure[temporal stepsize]
	{
		\includegraphics[width=0.33\textwidth]{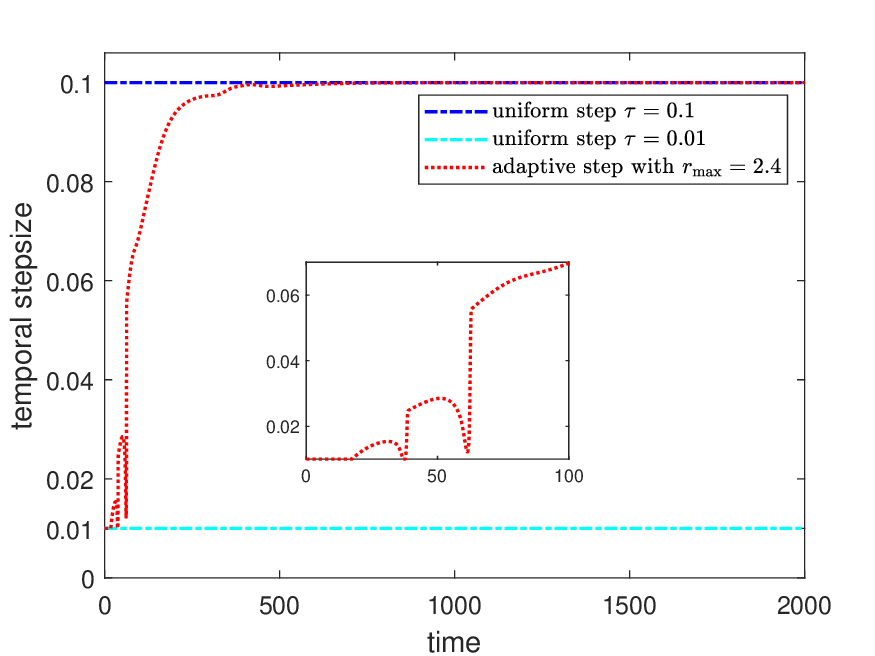}
		\label{figEx2_6c}
	}%
	\setlength{\abovecaptionskip}{0.0cm} 
	\setlength{\belowcaptionskip}{0.0cm}
	\caption{Evolutions of the original energy (left), the maximum-norm solution (middle), and the temporal stepsize (right)  for the BDF2-sESAV-I scheme: the Flory--Huggins potential}
	\label{figEx2_6}
	\vspace{-6pt}
\end{figure}
Numerical results on both uniform and adaptive non-uniform temporal grids are displayed in Fig. \ref{figEx2_5} for the double-well potential. It is clearly observed that the proposed BDF2-sESAV-I scheme preserves the energy dissipation law and MBP simultaneously, whether using uniform time stepsizes $\tau = 0.4$, $0.04$, or an adaptive time stepsize \eqref{ex:adaptive}. Moreover, Fig. \ref{figEx2_5c} shows that the temporal stepsizes are automatically selected to capture the changes of energy in the adaptive method, that is, during the time interval $ [ 0, 30 ] $, smaller temporal stepsizes are adopted since the energy decreases rapidly; after $ t = 30 $, the energy changes more and more slowly, and thus the temporal stepsize is increasing gradually; when $ t > 500 $, the temporal stepsize remains around $ 0.4 $. Furthermore, we find that although large temporal stepsize are often used in the adaptive method, the energy and maximum-norm of numerical solutions are in excellent agreement with those obtained using the small uniform stepsize $\tau = 0.04$. This indicates the effectiveness and efficiency of the adaptive BDF2-sESAV-I scheme, which can be further evidenced through Table \ref{tab1_Ex2}, where over $ 85 \% $ of the CPU time is saved. We also test the Flory--Huggins potential case, and the simulation results are displayed in Fig. \ref{figEx2_6} and Table \ref{tab1_Ex2}, from which similar phenomena and conclusions as before can be observed. Finally, the phase structures of these two potentials captured by the BDF2-sESAV-I scheme using the adaptive time stepsize \eqref{ex:adaptive} at some time instants are presented in Figs. \ref{figEx2_7} and \ref{figEx2_8}, respectively.

\begin{table}[!htbp]
	\vspace{-10pt}
	\caption{CPU times and the total number of time steps\label{Ex5_tab1} yielded by the BDF2-sESAV-I scheme}%
	\label{tab1_Ex2}
	{\footnotesize\begin{tabular*}{\columnwidth}{@{\extracolsep\fill}ccccc@{\extracolsep\fill}}
			\toprule
			& \multicolumn{2}{c}{double-well potential} & \multicolumn{2}{c}{Flory--Huggins potential}\\
			\midrule
			time-stepping strategy  & $N$ &  CPU times & $N$ & CPU times \\
			\midrule
			uniform step  $ \tau = \tau_{\max}$      &   5,000   &  2 m 24 s   &  20,000   &  10 m 56 s            \\ 
			adaptive step with $ r_{\max} = 2.4 $    &   6,018   &  3 m 15 s   &  24,171   &  13 m 51 s            \\ 
			uniform step  $ \tau = \tau_{\min} $     &   50,000  &  27 m 11 s  &  200,000  &  1 h 50 m 44 s          \\ 
			\bottomrule
	\end{tabular*}}
	\vspace{-8pt}
\end{table}
\begin{figure}[!htbp]
	\vspace{-8pt}
	\centering
	\subfigure[$t=5$]
	{
		\includegraphics[width=0.24\textwidth]{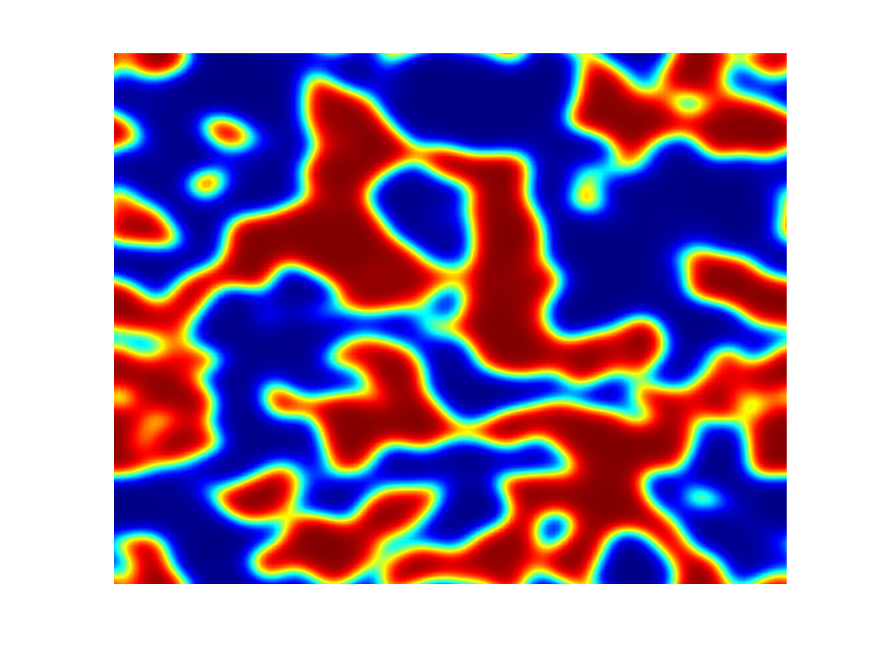}
		\label{figEx2_7a}
	}%
	\subfigure[$t=50$]
	{
		\includegraphics[width=0.24\textwidth]{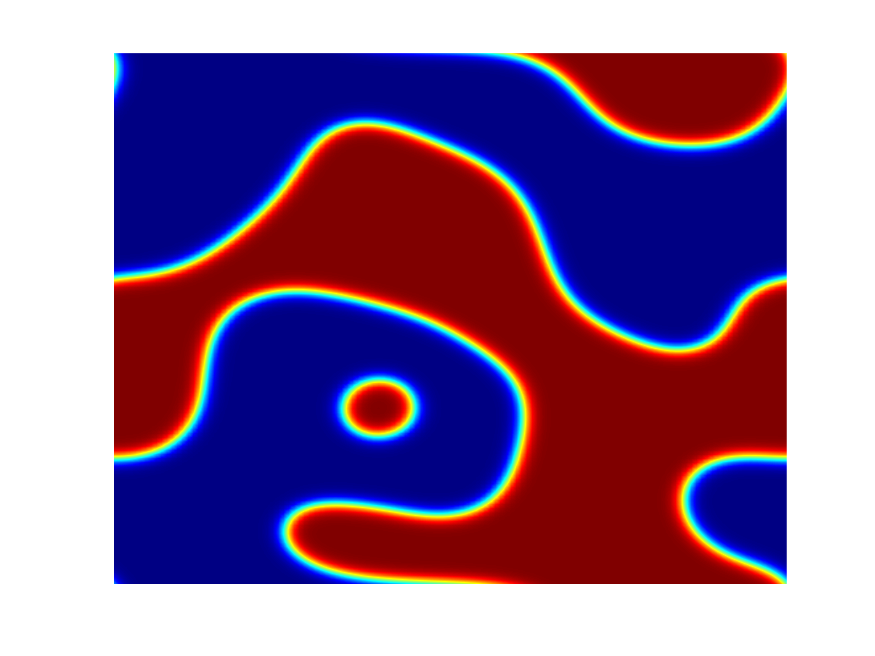}
		\label{figEx2_7b}
	}%
	\subfigure[$t=500$]
	{
		\includegraphics[width=0.24\textwidth]{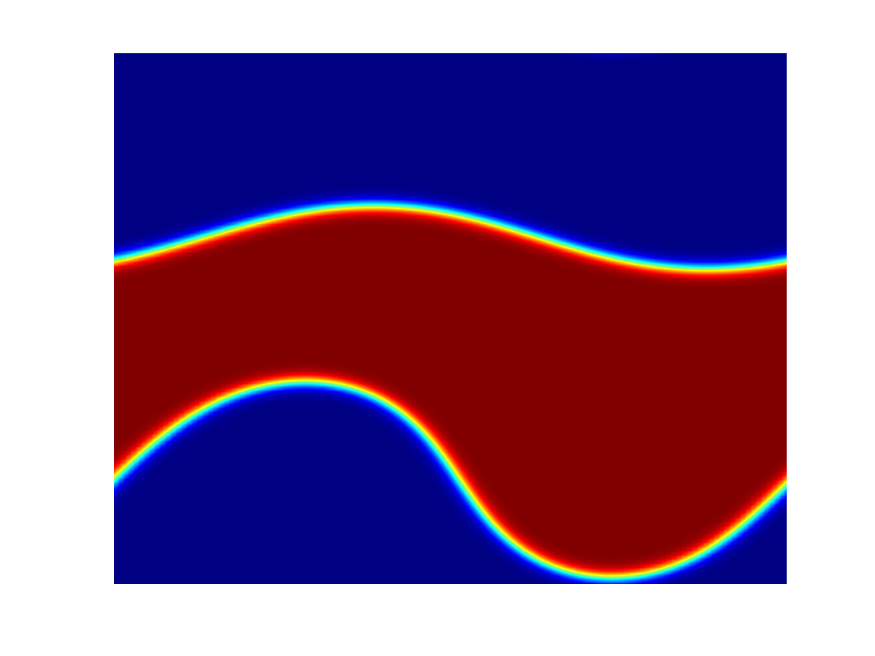}
		\label{figEx2_7c}
	}%
	\subfigure[$t=2000$]
	{
		\includegraphics[width=0.24\textwidth]{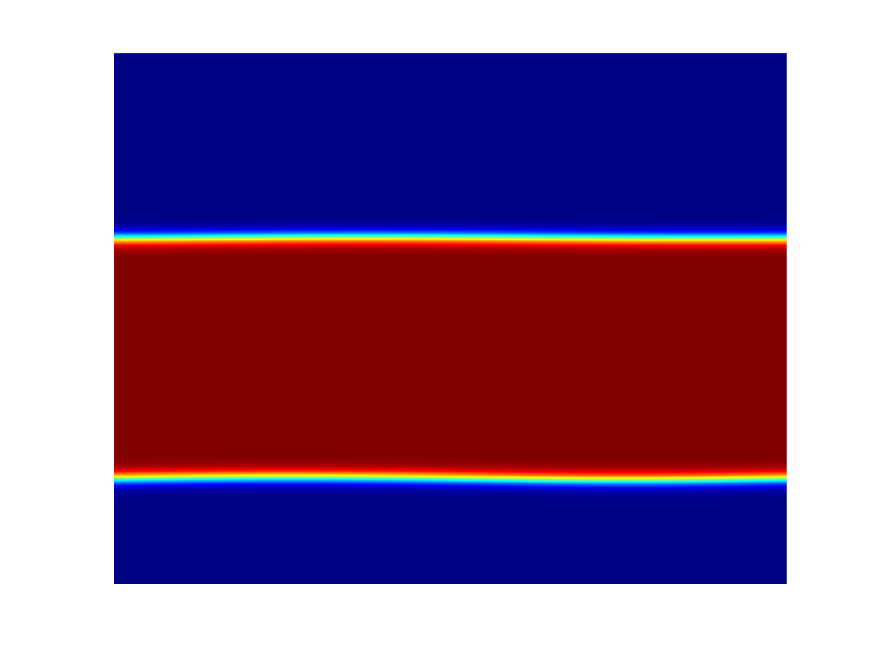}
		\label{figEx2_7d}
	}%
	\setlength{\abovecaptionskip}{0.0cm} 
	\setlength{\belowcaptionskip}{0.0cm}
	\caption{ Simulated phase structures by the BDF2-sESAV-I scheme with \eqref{ex:adaptive} for the coarsening dynamics of the double-well potential}
	\label{figEx2_7}
	\vspace{-8pt}
\end{figure}
\begin{figure}[!htbp]
	\vspace{-8pt}
	\centering
	\subfigure[$t=5$]
	{
		\includegraphics[width=0.24\textwidth]{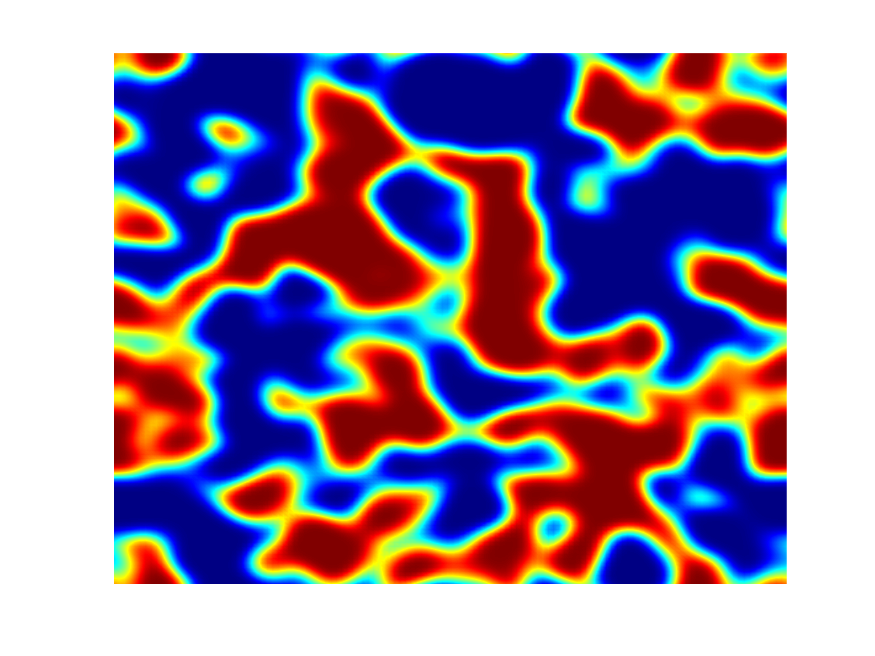}
		\label{figEx2_8a}
	}%
	\subfigure[$t=50$]
	{
		\includegraphics[width=0.24\textwidth]{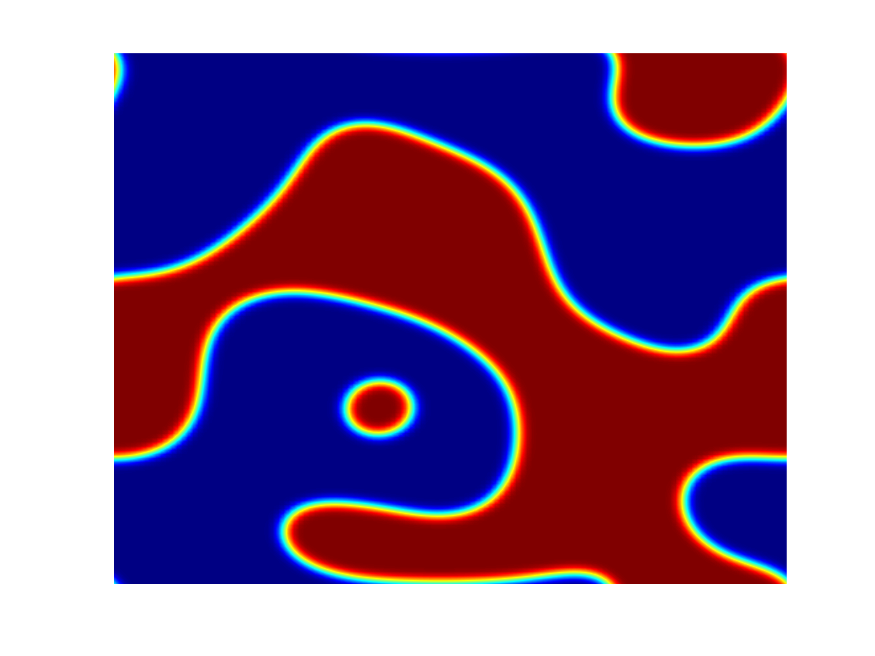}
		\label{figEx2_8b}
	}%
	\subfigure[$t=500$]
	{
		\includegraphics[width=0.24\textwidth]{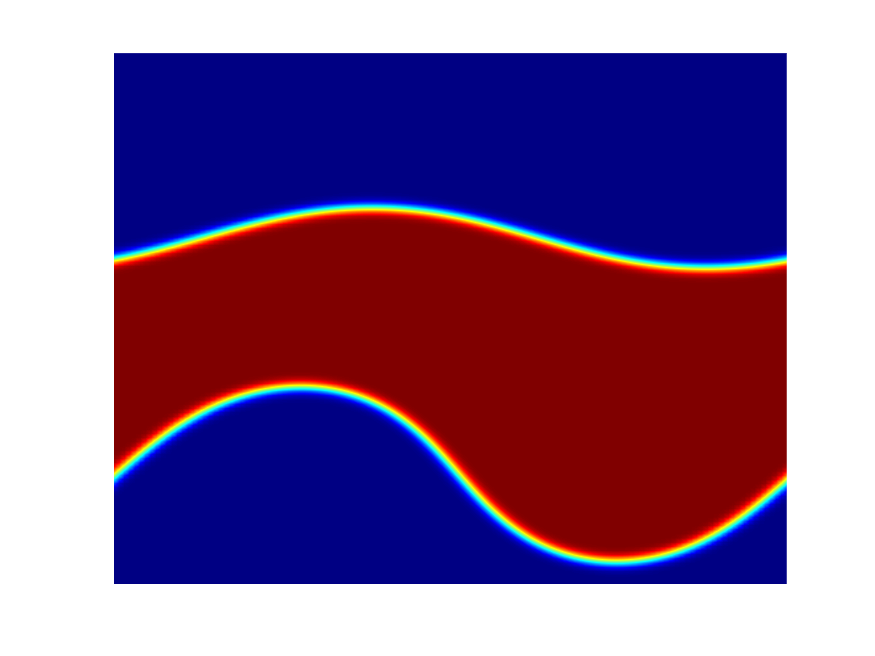}
		\label{figEx2_8c}
	}%
	\subfigure[$t=2000$]
	{
		\includegraphics[width=0.24\textwidth]{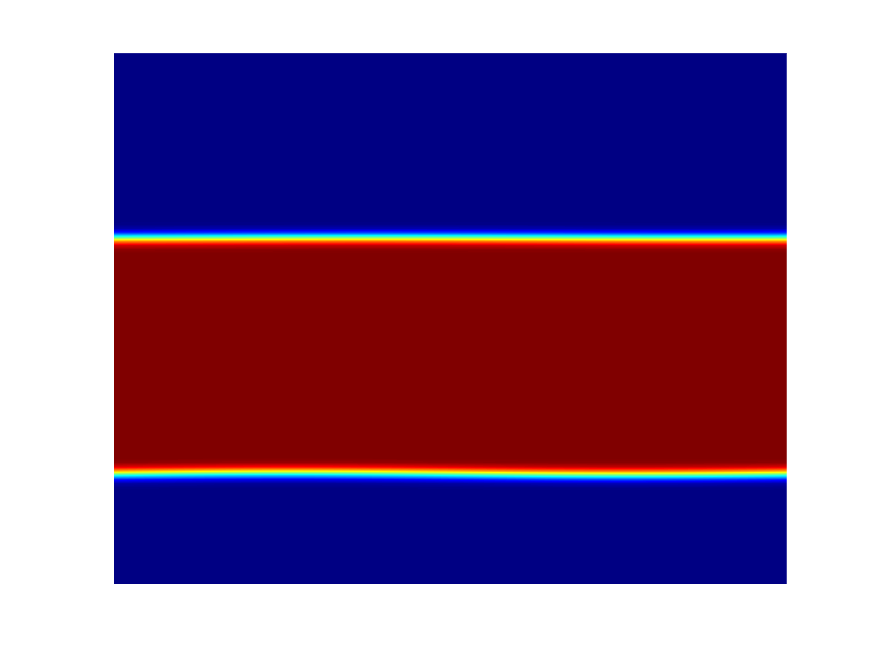}
		\label{figEx2_8d}
	}%
	\setlength{\abovecaptionskip}{0.0cm} 
	\setlength{\belowcaptionskip}{0.0cm}
	\caption{ Simulated phase structures by the BDF2-sESAV-I scheme with \eqref{ex:adaptive} for the coarsening dynamics of the Flory--Huggins potential}
	\label{figEx2_8}
	\vspace{-6pt}
\end{figure}

%%%%%%%%%%%%%%%%%%%%%%%%%%%%%%%%%%
\section{Concluding remarks}
%%%%%%%%%%%%%%%%%%%%%%%%%%%%%%%%%%
In this paper, we proposed several stabilization schemes that preserve both the discrete energy dissipation law and the MBP for the Allen--Cahn equation \eqref{Model:tAC}. These schemes are constructed using the variable-step BDF2 formula and the ESAV approach, enhanced with a novel stabilization technique. Specifically, we designed a novel essential auxiliary functional  satisfying assumptions (\textbf{A1})--(\textbf{A2}), which ensures us to construct variable-step second-order and unconditionally energy dissipative ESAV schemes. Furthermore, this auxiliary functional facilitates the construction of MBP-preserving stabilization schemes, of which the unbalanced BDF2-sESAV-I scheme is particularly recommended due to its exceptional ability to preserve the discrete MBP even for large trmporal stepsizes. Most importantly, by leveraging the kernel recombination technique,  we rigorously proved  the energy dissipation law and MBP-preservation for the variable-step BDF2-sESAV-I scheme under the conditions $ 0 < r_{k} < 4.864 - \delta $ and $ 0 < r_{k} < 1 + \sqrt{2} $, respectively, as detailed in Theorems \ref{thm:MBP_2} and \ref{thm:energy_2}. Moreover, Theorem \ref{thm:Err_Energy} presented the difference between the original energy and the modified energy of the proposed BDF2-sESAV-I scheme. Combined with error estimates for the auxiliary variable (cf. Lemma \ref{lem:estig}), we derived optimal error estimates in the discrete  $ H^{1}$- and $ L^{\infty} $-norms for the phase function, as presented in Theorems \ref{thm:ErrR} and \ref{thm:ErrPhi}. These theoretical claims were validated through extensive numerical experiments, confirming the accuracy and effectiveness of the proposed scheme.

%\section*{CRediT authorship contribution statement}
%\textbf{Bingyin Zhang}: Methodology, Formal analysis, Software, Writing-Original draft.
%\textbf{Hongfei Fu}: Methodology, Conceptualization, Supervision, Writing-Reviewing and Editing, Funding acquisition.
%\textbf{Rihui Lan}: Writing-Reviewing and Editing, Funding acquisition.
%\textbf{Shusen Xie}: Writing-Reviewing and Editing, Funding acquisition.
%
%\section*{Declaration of competing interest}
%The authors declare that they have no competing interests.
%\section*{Data availability}   Data will be made available on request.
%
%\section*{ Acknowledgements}
%This work was supported in part by the National Natural Science Foundation of China (Nos. 12131014, 12301531, 12371412), by the Shandong Provincial Natural Science Foundation (Nos. ZR2024MA023, 2023HWYQ-064), by the Fundamental Research Funds for the Central Universities (Nos. 202264006, 202461103), and by the OUC Scientific Research Program for Young Talented Professionals.

%%%%%%%%%%%%%%%%%%%%%%%%%%
\section*{Declarations}
%%%%%%%%%%%%%%%%%%%%%%%%%%
\begin{itemize}
	\item \textbf{Funding}~   This work was supported in part by the National Natural Science Foundation of China (Nos. 12131014, 12301531, 12371412), by the Shandong Provincial Natural Science Foundation (Nos. ZR2024MA023, 2023HWYQ-064), by the Fundamental Research Funds for the Central Universities (Nos. 202264006, 202461103), and by the OUC Scientific Research Program for Young Talented Professionals.
	\item  \textbf{Data Availability}~  Enquiries about data availability should be directed to the authors.
	\item \textbf{Conflict of interest}~  The authors declare that they have no competing interests.
\end{itemize}

\appendix

\section{Proof of Lemma \ref{lem:estihat}}\label{App:B}
\setcounter{equation}{0}
\renewcommand\theequation{A.\arabic{equation}}
It is easy to check that the exact solution $ \phi( t ) $ of \eqref{Model:tAC} satisfies
\begin{equation}\label{Errh_1}
		\md_{1} \phi( t_n )  = \varepsilon^2 \Delta_h \phi( t_n ) + f( \phi( t_{n-1} ) ) - \kappa ( \phi( t_n ) - \phi( t_{n-1} ) ) + T_{1}^n,
\end{equation}
where the local truncation error $ T_{1}^n $ can be estimated by
\begin{equation}\label{Trun_11}
	\| T_{1}^{n} \|_{\infty} \leq C \left( \tau_{n} \| \phi \|_{ W^{2,\infty}(0,T;L^{\infty}(\Omega)) } + h^2 \| \phi \|_{ L^{\infty}(0,T;W^{4,\infty}(\Omega)) } \right).
\end{equation}
Then the difference between \eqref{sch:2_3} and \eqref{Errh_1} leads to
\begin{equation}\label{Errh_2}
		\hat{e}_{\phi}^{n} - e_{\phi}^{n-1}  = \varepsilon^2 \tau_{n} \Delta_h \hat{e}_{\phi}^{n} + \tau_{n} ( f( \phi( t_{n-1} ) ) - f( \phi^{n-1} ) ) - \kappa \tau_{n} ( \hat{e}_{\phi}^{n} - e_{\phi}^{n-1} ) + \tau_{n} T_{1}^n,  
\end{equation}
for $ n \geq 1$.

On the first hand, taking the discrete inner product of \eqref{Errh_2} with $ \hat{e}_{\phi}^{n} $ and rearranging the terms, we get
\begin{equation*} 
		( 1 + \kappa \tau_{n} ) \| \hat{e}_{\phi}^{n} \|^{2}  \leq 	( 1 + \kappa \tau_{n} ) \big\langle  e_{\phi}^{n-1}, \hat{e}_{\phi}^{n} \big\rangle +  \tau_{n} \big\langle f( \phi( t_{n-1} ) ) - f( \phi^{n-1} ), \hat{e}_{\phi}^{n} \big\rangle  + \tau_{n} \big\langle T_{1}^n, \hat{e}_{\phi}^{n} \big\rangle,
\end{equation*}
which, by eliminating $ \| \hat{e}_{\phi}^{n} \| $ and using \eqref{Bound_expf} and \eqref{Trun_11}, leads to
\begin{equation*}
	\begin{aligned}
		( 1 + \kappa \tau_{n} ) \| \hat{e}_{\phi}^{n} \| & \leq 	( 1 + \kappa \tau_{n} ) \| e_{\phi}^{n-1} \| +  \tau_{n} \| f( \phi( t_{n-1} ) ) - f( \phi^{n-1} ) \| + \tau_{n} \| T_{1}^n \| \\
		& \leq ( 1 + \kappa \tau_{n} + \overline{K} \tau_{n} ) \| e_{\phi}^{n-1} \| + C \tau_{n} ( \tau_{n} + h^2 ),
	\end{aligned}
\end{equation*}
Thus, the first conclusion of Lemma \ref{lem:estihat} is proved.

On the other hand, we equivalently rewrite \eqref{Errh_2} as
\begin{equation*}
		\left[ ( 1 + \kappa \tau_{n} ) I - \tau_{n} \varepsilon^2 \Delta_h \right]\hat{e}_{\phi}^{n} = ( 1 + \kappa \tau_{n} ) e_{\phi}^{n-1} + \tau_{n} ( f( \phi( t_{n-1} ) ) - f( \phi^{n-1} ) ) + \tau_{n} T_{1}^n.
\end{equation*}
Then, a direct application of Lemma \ref{lem:MBP_left} gives
\begin{equation*}
		( 1 + \kappa \tau_{n} ) \| \hat{e}_{\phi}^{n} \|_{\infty} \leq ( 1 + \kappa \tau_{n} + \overline{K} \tau_{n} )  \| e_{\phi}^{n-1} \|_{\infty}  + C \tau_{n} ( \tau_{n} + h^2 ),
\end{equation*}
which proves the second conclusion.

\section{Proof of Theorem \ref{thm:ErrR}}\label{App:C}
\setcounter{equation}{0}
\renewcommand\theequation{B.\arabic{equation}}
First, taking the discrete inner product of \eqref{ErrR_3} with $ \nabla_{\tau} e_\phi^{n} $ and rearranging the term lead to
\begin{equation}\label{ErrR_5}
	\begin{aligned}
		& \quad \big\langle G[ \nabla_{\tau} e_\phi^{n} ], 1 \big\rangle + \frac{\varepsilon^2 }{2} \| \nabla_{h} e_\phi^{n} \|^2 + \frac{ \delta \| \nabla_{\tau} e_\phi^{n} \|^2 }{32 \tau_{n} } 
		- \big\langle G[ \nabla_{\tau} e_\phi^{n-1} ], 1 \big\rangle 
		- \frac{\varepsilon^2 }{2} \| \nabla_{h} e_\phi^{n-1} \|^2 
		\\
		& \leq \big\langle f( \phi( t_{n} ) ) - V( g_h(\hat{\phi}^{n}, R^{n-1}) )   f(\hat{\phi}^{n}), \nabla_{\tau} e_\phi^{n} \big\rangle \\
		& \quad + \kappa \big\langle \phi^{n} - V ( g_h(\hat{\phi}^{n}, R^{n-1}) ) \hat{\phi}^{n}, \nabla_{\tau} e_\phi^{n} \big\rangle + \big\langle T_2^n, \nabla_{\tau} e_\phi^{n} \big\rangle\\
		& \leq \big\langle f( \phi( t_{n} ) ) - V( g_h(\hat{\phi}^{n}, R^{n-1}) )   f(\hat{\phi}^{n}), \nabla_{\tau} e_\phi^{n} \big\rangle 
		- \frac{\kappa }{2} \| \nabla_{\tau} e_\phi^{n} \|^2 - \frac{\kappa }{2} \big\langle e_{\phi}^{n}, \nabla_{\tau} e_\phi^{n} \big\rangle \\
		& \quad  + \kappa \big\langle \hat{e}_{\phi}^{n} - \frac{1}{2} e_{\phi}^{n-1}, \nabla_{\tau} e_\phi^{n} \big\rangle +  \kappa \big\langle ( 1 - V ( g_h(\hat{\phi}^{n}, R^{n-1}) ) ) \hat{\phi}^{n}, \nabla_{\tau} e_\phi^{n} \big\rangle  + \big\langle T_2^n, \nabla_{\tau} e_\phi^{n} \big\rangle  \\
		& =: \sum^{6}_{i=1} I_{i},
	\end{aligned}
\end{equation}
where we have used Lemma \ref{lem:BDF2_P1} and the identity $ 2 a(a-b) \geq a^2 - b^2 $ and the fact
\begin{equation}\label{ErrR_phi_vphi}
		\phi^{n} - V ( g_h(\hat{\phi}^{n}, R^{n-1}) ) \hat{\phi}^{n} = -\frac{1}{2}\nabla_{\tau} e_{\phi}^{n} - \frac{1}{2} e_{\phi}^{n} + \hat{e}_{\phi}^{n} - \frac{1}{2} e_{\phi}^{n-1} 
		+ ( 1 - V ( g_h(\hat{\phi}^{n}, R^{n-1}) ) ) \hat{\phi}^{n}.
\end{equation}

For the first term $ I_{1} $, we use Young's inequality to get
\begin{equation*}
	\begin{aligned}
		I_1 & \leq \frac{ 32 \tau_{n} }{ \delta } \| f( \phi( t_{n} ) ) - V( g_h(\hat{\phi}^{n}, R^{n-1}) )   f(\hat{\phi}^{n}) \|^2 + \frac{ \delta \| \nabla_{\tau} e_\phi^{n} \|^2 }{128 \tau_{n} } \\
		& \leq \frac{ 64 \tau_{n} }{ \delta } \left( \| f( \phi( t_{n} ) ) -    f(\hat{\phi}^{n}) \|^2 + \| ( V( g_h(\hat{\phi}^{n}, R^{n-1}) ) - 1 )   f(\hat{\phi}^{n}) \|^2 \right) + \frac{ \delta \| \nabla_{\tau} e_\phi^{n} \|^2 }{128 \tau_{n} },
	\end{aligned}
\end{equation*}
which together with \eqref{Bound_expf} and Lemma \ref{lem:estig}  yields
\begin{equation*}\label{ErrR_6}
		I_1 \leq C \tau_{n} \left( \| \hat{e}_\phi^{n} \|^2 + \vert e_R^{n-1} \vert^2 + \tau_{n}^2 + h^4 \right) + \frac{ \delta \| \nabla_{\tau} e_\phi^{n} \|^2 }{128 \tau_{n} }.
\end{equation*}
Moreover, by Theorem \ref{thm:MBP_2} and Lemma \ref{lem:estig}, the last three terms can be similarly estimated by
\begin{equation*}\label{ErrR_7}
		\sum^{6}_{i=4} I_{i} 
		\leq  C \tau_{n} \left( \| \hat{e}_{\phi}^{n}  \|^2 + \| e_{\phi}^{n-1} \|^2 + \vert e_R^{n-1} \vert^2 + \| T_2^n \|^2 + \tau_{n}^2 + h^4 \right) + \frac{ \delta \| \nabla_{\tau} e_\phi^{n} \|^2 }{128 \tau_{n} }.
\end{equation*}

Thus, substituting the estimates $I_1$ and $I_4-I_6$ into \eqref{ErrR_5} and considering the identity $ 2 a(a-b) \geq a^2 - b^2 $ give us
\begin{equation}\label{ErrR_8}
	\begin{aligned}
		& \big\langle G[ \nabla_{\tau} e_\phi^{n} ], 1 \big\rangle + \frac{\kappa}{4} \| e_{\phi}^{n} \|^2 + \frac{\varepsilon^2 }{2} \| \nabla_{h} e_\phi^{n} \|^2 + \frac{ \delta \| \nabla_{\tau} e_\phi^{n} \|^2 }{64 \tau_{n} } + \frac{\kappa}{2} \| \nabla_{\tau} e_\phi^{n} \|^2  
		\\
		& \quad \leq \big\langle G[ \nabla_{\tau} e_\phi^{n-1} ], 1 \big\rangle  + \frac{\kappa}{4} \| e_{\phi}^{n-1} \|^2 
		+ \frac{\varepsilon^2 }{2} \| \nabla_{h} e_\phi^{n-1} \|^2 
		\\
		& \qquad + C \tau_{n} \left( \| \hat{e}_{\phi}^{n}  \|^2 + \| e_{\phi}^{n-1} \|^2 + \vert e_R^{n-1} \vert^2  + \| T_2^n \|^2 + \tau_{n}^2 + h^4 \right).
	\end{aligned}
\end{equation}

Second, multiplying \eqref{ErrR_4} by $ 2 \tau_{n} e_{R}^{n} $ yields
\begin{equation}\label{ErrR_9}
	\begin{aligned}
		\vert e_R^{n} \vert^2 - \vert e_R^{n-1} \vert^2 %+ \vert e_R^{n} - e_R^{n-1} \vert^2 
		&  \le - 2 e_R^{n} V ( g_h(\hat{\phi}^{n}, R^{n-1}) ) \big\langle f(\hat{\phi}^{n}), \nabla_{\tau} e_{\phi}^n \big\rangle \\
		& \quad  + 2 e_R^{n} \big\langle V ( g_h(\hat{\phi}^{n}, R^{n-1}) ) f(\hat{\phi}^{n}) - f(\phi(t_n)), \nabla_{\tau} \phi( t_n ) \big\rangle \\
		& \quad + 2 \kappa e_R^{n} \big\langle \phi^{n} - V ( g_h(\hat{\phi}^{n}, R^{n-1}) ) \hat{\phi}^{n},  \nabla_{\tau} e_{\phi}^n \big\rangle \\
		& \quad - 2 \kappa e_R^{n} \big\langle \phi^{n} - V ( g_h(\hat{\phi}^{n}, R^{n-1}) ) \hat{\phi}^{n},  \nabla_{\tau} \phi( t_n )  \big\rangle 
		+ 2 \tau_{n} e_R^{n} T_{3}^n
		 := \sum^{5}_{i=1} J_{i}.
	\end{aligned}
\end{equation}

By the boundedness of $ \phi^n $, $ \hat{\phi}^n $, \eqref{Bound_expf} and the assumption (\textbf{A2}), the first and third terms can be estimated similarly to the analysis of $I_{4}$--$I_{6}$ as follows
\begin{equation*}
		J_1 + J_3  \leq 2 \overline{K} \vert \Omega \vert \vert e_R^{n} \vert \| \nabla_{\tau} e_{\phi}^n \| + 4 \kappa \beta \vert \Omega \vert \vert e_R^{n} \vert \| \nabla_{\tau} e_{\phi}^n \| \leq \hat{C} \tau_{n} \vert e_{R}^{n} \vert^2 + \frac{ \delta \| \nabla_{\tau} e_\phi^{n} \|^2 }{64 \tau_{n} }.
\end{equation*}

From the estimate process of $I_{1}$, we see
\begin{equation*}
		\| f( \phi( t_{n} ) ) - V( g_h(\hat{\phi}^{n}, R^{n-1}) )   f(\hat{\phi}^{n}) \|^2 \leq C \left( \| \hat{e}_\phi^{n} \|^2 + \vert e_R^{n-1} \vert^2 + \tau_{n}^2 + h^4 \right),
\end{equation*}
and then, by Cauchy-Schwarz inequality, mean value theorem and the above result, we have 
\begin{equation*}
	\begin{aligned}
		J_2 + J_5 & = 2 e_R^{n} \big\langle V ( g_h(\hat{\phi}^{n}, R^{n-1}) ) f(\hat{\phi}^{n}) - f(\phi(t_n)), \nabla_{\tau} \phi( t_n ) \big\rangle + 2 \tau_{n} e_R^{n} T_{3}^n \\
		&  \leq \frac{1}{2} \tau_n \vert e_R^{n} \vert^2 + C \tau_{n} \left( \| \hat{e}_\phi^{n} \|^2 + \vert e_R^{n-1} \vert^2 + \vert T_3^n \vert^2 + \tau_{n}^2 + h^4 \right).
	\end{aligned}
\end{equation*}

Finally, using the substitution formula \eqref{ErrR_phi_vphi}, $ J_{4} $ can be estimated by
\begin{equation*}
	\begin{aligned}
		J_4 & = - 2 \kappa e_R^{n} \big\langle -\nabla_{\tau} e_{\phi}^{n} + \hat{e}_{\phi}^{n} - e_{\phi}^{n-1} + ( 1 - V ( g_h(\hat{\phi}^{n}, R^{n-1}) ) ) \hat{\phi}^{n}, \nabla_{\tau} \phi( t_n )  \big\rangle \\
		& \leq 2 \tau_{n} \kappa \vert e_R^{n} \vert \left( \| \nabla_{\tau} e_{\phi}^{n} \| + \| \hat{e}_{\phi}^{n} \| + \| e_{\phi}^{n-1} \| + \big\vert 1 - V ( g_h(\hat{\phi}^{n}, R^{n-1}) ) \big\vert \beta \right) \| \phi_{t} \|_{ L^{\infty}( 0,T;L^{2}(\Omega) ) },
	\end{aligned}
\end{equation*}
which together with Lemma \ref{lem:estig}  and Young's inequality implies
\begin{equation*}
	\begin{aligned}
		J_4 & \leq \Big( 2 \kappa \tau_{n}^2 \| \phi_{t} \|_{ L^{\infty}( 0,T;L^{2}(\Omega) ) }^{2} + \frac{1}{4} \tau_{n} \Big) \vert e_R^{n} \vert^2  
		+ \frac{\kappa}{2} \| \nabla_{\tau} e_{\phi}^{n} \|^2  \\
		&\qquad + C \tau_{n } \Big( \| \hat{e}_{\phi}^{n} \|^2 + \| e_{\phi}^{n-1} \|^2 + \vert e_{R}^{n-1} \vert^2 + \tau_{n}^{2} + h^4 \Big) \\
		& \leq \frac{1}{2} \tau_{n} \vert e_R^{n} \vert^2  + \frac{\kappa}{2} \| \nabla_{\tau} e_{\phi}^{n} \|^2 
		 + C \tau_{n } \Big( \| \hat{e}_{\phi}^{n} \|^2 + \| e_{\phi}^{n-1} \|^2 + \vert e_{R}^{n-1} \vert^2 + \tau_{n}^{2} + h^4 \Big),
	\end{aligned}
\end{equation*}
under the temporal stepsize condition $ \tau_{n} \leq  1 / (8\kappa \| \phi_{t} \|_{ L^{\infty}(0,T;L^{2}(\Omega)) }^2 + 1)$.

Combining \eqref{ErrR_9} with the estimates of $J_{1}$--$J_{5}$, we obtain
\begin{equation}\label{ErrR_13}
	\begin{aligned}
		\vert e_R^{n} \vert^2 - \vert e_R^{n-1} \vert^2 & \leq  ( \hat{C} + 1 ) \tau_{n} \vert e_R^{n} \vert^2 + \frac{\kappa}{2} \| \nabla_{\tau} e_{\phi}^{n} \| + \frac{ \delta \| \nabla_{\tau} e_\phi^{n} \|^2 }{64 \tau_{n} } \\
		& \quad + C \tau_{n} \big( \| \hat{e}_\phi^{n} \|^2 + \| e_{\phi}^{n-1} \|^2 + \vert e_R^{n-1} \vert^2 + \vert T_3^n \vert^2 + \tau_{n}^2 + h^4 \big),
	\end{aligned}
\end{equation}
where the constant $ \hat{C} := ( 64 \overline{K}^2 + 256  \kappa^2 \beta^2)   \vert \Omega \vert \delta^{-1}$.

Let $ W^{n} := \big\langle G[ \nabla_{\tau} e_\phi^{n} ], 1 \big\rangle + \frac{\kappa}{4} \| e_\phi^{n} \|^2 + \frac{\varepsilon^2 }{2} \| \nabla_{h} e_\phi^{n} \|^2  + \vert e_R^{n} \vert^2 $ for $ n \geq 1 $. Then, adding \eqref{ErrR_8} and \eqref{ErrR_13} together, and utilizing  Lemma \ref{lem:estihat} yield
\begin{equation*}
		W^{n} - W^{n-1} 
		\leq ( \hat{C} + 1 ) \tau_{n} W^{n} +  C \tau_{n} \left( W^{n-1}  + \| T_2^n \|^2 + \vert T_3^n \vert^2 + \tau_{n}^2 + h^4 \right).
\end{equation*}
Thus, we sum up the above inequality from $2$ to $n$ to get
\begin{equation}\label{ErrR_15}
	W^{n} - W^{1} \leq ( \hat{C} + 1 ) \tau_{n} W^{n} + C \sum^{n}_{k=2} \tau_{k} \left( W^{k-1}  + \| T_2^{k} \|^2 
	+ \vert T_3^{k} \vert^2 + \tau_{k}^2 + h^4 \right),~  n \geq 2.
\end{equation}

Note that \eqref{ErrR_3}--\eqref{ErrR_4} also hold for $n=1$ with 
$\md_{2} e_\phi^{1} = \md_{1} e_\phi^{1}$.
Thus, following the similar argument as case $ n \geq 2 $, we can easily deduce the following estimate
\begin{equation}\label{ErrR_15_2}
	\begin{aligned}
		& \big\langle \md_{1} e_\phi^{1}, \nabla_{\tau} e_\phi^{1}  \big\rangle  + \frac{\kappa}{4} \| e_\phi^{1} \|^2 + \frac{\varepsilon^2 }{2} \| \nabla_{h} e_\phi^{1} \|^2 + \vert e_R^{1} \vert^2 \\
		& \quad \leq \frac{ \delta }{ 32 \tau_{1} } \| \nabla_{\tau} e_\phi^{1} \|^2 +  ( \hat{C} + 1 ) \tau_{1} \vert e_R^{1} \vert^2 +  C \tau_{1} \left( \| T_2^1 \|^2 + \vert T_3^1 \vert^2 + \tau_{1}^2 + h^4 \right),
	\end{aligned}
\end{equation}
under condition $ \tau_{1} \leq  1 / (8\kappa \| \phi_{t} \|_{ L^{\infty}(0,T;L^{2}(\Omega)) }^2 + 1)$. Furthermore, using \eqref{ErrT_21} with $n=1$ gives
\begin{equation}\label{ErrR_16}
   W^{1} \leq  C ( \tau_{1}^3 + h^4 ) \leq C ( \tau^4 + h^4 ),~~ \text{if}~  \tau_{1} \leq \min\big\{ \frac{1}{ 2 ( \hat{C} + 1 ) }, \tau^{4/3} \big\}.
\end{equation}
Now, inserting \eqref{ErrR_16} and the truncation errors \eqref{ErrT_21} into \eqref{ErrR_15}, and together with the discrete Gr\"onwall’s lemma, we can derive the desired estimate \eqref{thmErrR:1} under conditions $ \tau_{1} \leq \tau^{4/3} $ and $ \tau_{n} \leq  \hat{ \tau }:=\min \big\{ \frac{ 1 }{ 8\kappa \| \phi_{t} \|_{ L^{\infty}(0,T;L^{2}(\Omega)) }^2 + 1 }, \frac{1}{ 2 ( \hat{C} + 1 ) } \big\} $. 

Moreover, due to \eqref{thmErrR:1} and Lemma \ref{lem:estihat}, we have $ \| \hat{e}_{\phi}^n \| \leq C ( \tau + h^2 ) $, and thus
Lemma \ref{lem:estig}  reduces to
\begin{equation}\label{ErrR_17}
	\big\vert V( g_h( \hat{\phi}^n, R^{n-1} ) ) - 1 \big\vert \leq C \big( \| \hat{e}_{\phi}^n \| + \vert e_{R}^{n-1} \vert + \tau + h^2 \big)^2 \leq C ( \tau^2 + h^4 ).
\end{equation}
Furthermore, the result \eqref{ErrR_17} can further improve the above estimates of $I_{1}$ and $ I_{4} $--$I_{6}$ to
\begin{equation*} 
		I_1  \leq C \tau_{n} \Big( \| \hat{e}_\phi^{n} \|^2 + \big\vert V( g_h( \hat{\phi}^n, R^{n-1} ) ) - 1 \big\vert^2 \Big) + \frac{ \delta \| \nabla_{\tau} e_\phi^{n} \|^2 }{64 \tau_{n} } \leq C \tau_{n} ( \| \hat{e}_\phi^{n} \|^2 +  \tau^4 + h^4 ) + \frac{ \delta \| \nabla_{\tau} e_\phi^{n} \|^2 }{64 \tau_{n} },
\end{equation*}
%and
\begin{equation*}
		I_4 + I_5 + I_{6} \leq  C \tau_{n} \left( \| \hat{e}_\phi^{n} \|^2 + \| T_{2}^{n} \|^2 + \tau^4 + h^4 \right) + \frac{ \delta \| \nabla_{\tau} e_\phi^{n} \|^2 }{64 \tau_{n} }.
\end{equation*}
Therefore, substituting the above improved estimates and \eqref{ErrT_21} into \eqref{ErrR_5} and using Lemma \ref{lem:estihat}, we  obtain the following improved version estimate of \eqref{ErrR_8}:
\begin{equation*}
	\begin{aligned}
		& \big\langle G[ \nabla_{\tau} e_\phi^{n} ], 1 \big\rangle + 
		\frac{\kappa}{4} \| e_\phi^{n} \|^2 + \frac{\varepsilon^2 }{2} \| \nabla_{h} e_\phi^{n} \|^2 
		\\
		& \quad \leq  \big\langle G[ \nabla_{\tau} e_\phi^{n-1} ], 1 \big\rangle + \frac{\kappa}{4} \| e_\phi^{n-1} \|^2
		+\frac{\varepsilon^2 }{2} \| \nabla_{h} e_\phi^{n-1} \|^2+ C \tau_{n} \big( \| e_{\phi}^{n-1} \|^2    + \tau^4 + h^4 \big),
	\end{aligned}
\end{equation*}
which implies the desired estimate \eqref{thmErrR:2} by applying the discrete Gr\"onwall’s lemma with initial estimate \eqref{ErrR_16}. Thus, we complete the proof.

\section{Proof of Theorem \ref{thm:Err_Energy}}\label{App:A}
\setcounter{equation}{0}
\renewcommand\theequation{C.\arabic{equation}}
The claimed inequality \eqref{thmErrE:0} is clearly valid for $n=0$. It remains to consider the case $ n \geq 1 $. It follows from \eqref{def:dis_energy} that
\begin{equation}\label{thmErrE:2}
	\vert E_h[\phi^{n}] - \me_h[\phi^{n}, R^{n}] \vert \leq \langle G[ \nabla_{\tau} \phi^{n} ], 1 \rangle + \vert  E_{1h}[ \phi^{n} ] - R^n \vert,
\end{equation}
in which  Theorem \ref{thm:ErrR} implies that
\begin{equation*} 
		\langle G[ \nabla_{\tau} \phi^{n} ], 1 \rangle 
		 \leq \frac{r_{n+1} \sqrt{r_{\max }}}{\left(1+r_{n+1}\right)} \frac{ \| \nabla_{\tau} \phi( t_{n} ) \|^{2} }{\tau_n} + 2 \big\langle G[ \nabla_{\tau} e_\phi^{n} ], 1 \big\rangle  =\mo ( \tau + h^4 ).
\end{equation*}
Moreover, it follows from the mean value theorem and \eqref{thmErrR:1} that
\begin{equation*} 
	\begin{aligned}
		\vert  E_{1h}[ \phi^{n} ] - R^n \vert & \leq \vert  E_{1h}[ \phi^{n} ] - E_{1h}[ \phi ( t_{n} ) ] \vert + \vert  E_{1h}[ \phi ( t_{n} ) ] - E_1[ \phi ( t_{n} ) ] \vert + \vert e_{R}^{n} \vert \\
		& \leq \vert  \langle F(\phi^n) - F( \phi( t_{n} ) ), 1 \rangle \vert + C ( \tau + h^2 ) \\
		& \leq C  ( \| e_{\phi}^{n} \|_{\infty} + \tau + h^2 ) =\mo ( \tau + h^2 ),
	\end{aligned}
\end{equation*}
where the $L^{\infty}$-norm error estimate \eqref{thmErrPhi:1} has been applied. Inserting the above estimates into \eqref{thmErrE:2} completes the proof of \eqref{thmErrE:0}.

Finally, from the discrete energy dissipation law \eqref{dis_energy:e1}, one immediately gets
\begin{equation*}
		\me_h[\phi^{n}, R^{n}] \leq \me_h[\phi^{n-1}, R^{n-1}] \quad \text{and} \quad \me_h[\phi^{n}, R^{n}] \leq \me_h[\phi^{0}, R^{0}] = E_h[\phi^{0}],
\end{equation*}
for all $ n \ge 1$, which together with \eqref{thmErrE:0} proves \eqref{thmErrE:1}.

%% If you have bib database file and want bibtex to generate the
%% bibitems, please use
%\bibliographystyle{elsarticle-num}
\bibliographystyle{spmpsci}
\bibliography{Ref_JSC}

\begin{thebibliography}{10}
\providecommand{\url}[1]{{#1}}
\providecommand{\urlprefix}{URL }
\expandafter\ifx\csname urlstyle\endcsname\relax
  \providecommand{\doi}[1]{DOI~\discretionary{}{}{}#1}\else
  \providecommand{\doi}{DOI~\discretionary{}{}{}\begingroup
  \urlstyle{rm}\Url}\fi

\bibitem{SISC_2019_Akrivis}
Akrivis, G., Li, B., Li, D.: Energy-decaying extrapolated {RK-SAV} methods for
  the {A}llen--{C}ahn and {C}ahn--{H}illiard equations.
\newblock SIAM J. Sci. Comput. \textbf{41}, A3703--A3727 (2019)

\bibitem{Acta_Allen_1979}
Allen, A., Cahn, J.: A microscopic theory for antiphase boundary motion and its
  application to antiphase domain coarsening.
\newblock Acta Metall. \textbf{27}, 1085--1095 (1979)

\bibitem{SINUM_2013_Baskaran}
Baskaran, A., Lowengrub, J., Wang, C., Wise, S.: Convergence analysis of a
  second order convex splitting scheme for the modified phase field crystal
  equation.
\newblock SIAM J. Numer. Anal. \textbf{51}, 2851--2873 (2013)

\bibitem{FIC_2006_Bates}
Bates, P.: On some nonlocal evolution equations arising in materials science.
\newblock Fields Inst. Commun. \textbf{48}, 13--52 (2006)

\bibitem{BIT_1998_Becker}
Becker, J.: A second order backward difference method with variable steps for a
  parabolic problem.
\newblock BIT \textbf{38}, 644--662 (1998)

\bibitem{NM_1971_Hilbert}
Bramble, J., Hilbert, S.: Bounds for a class of linear functionals with
  application to {H}ermite interpolation.
\newblock Numer. Math. \textbf{16}, 362--369 (1971)

\bibitem{CNSNS_2024_Mei}
Bu, L., Li, R., Mei, L., Wang, Y.: On high-order schemes for the
  space-fractional conservative {A}llen--{C}ahn equations with local and
  local-nonlocal operators.
\newblock Commun. Nonlinear Sci. Numer. Simulat. \textbf{138}, 108171 (2024)

\bibitem{SINUM_2019_Chen}
Chen, W., Wang, X., Yan, Y., Zhang, Z.: A second order {BDF} numerical scheme
  with variable steps for the {C}ahn--{H}illiard equation.
\newblock SIAM J. Numer. Anal. \textbf{57}, 495--525 (2019)

\bibitem{NM_1992_Elliott}
Copetti, M., Elliott, C.: Numerical analysis of the {C}ahn--{H}illiard equation
  with a logarithmic free energy.
\newblock Numer. Math. \textbf{63}, 39--65 (1992)

\bibitem{JSC_Zhang_2022}
Di, Y., Wei, Y., Zhang, J., Zhao, C.: Sharp error estimate of an implicit
  {BDF}2 scheme with variable time steps for the phase field crystal model.
\newblock J. Sci. Comput. \textbf{92}, 65 (2022)

\bibitem{BIT_2023_Schratz}
Doan, C., Hoang, T., Ju, L., Schratz, K.: Low regularity integrators for
  semilinear parabolic equations with maximum bound principles.
\newblock BIT Numer. Math. \textbf{63}, 2 (2023)

\bibitem{SINUM_Du_2019}
Du, Q., Ju, L., Li, X., Qiao, Z.: Maximum principle preserving exponential time
  differencing schemes for the nonlocal {A}llen--{C}ahn equation.
\newblock SIAM J. Numer. Anal. \textbf{57}, 875--898 (2019)

\bibitem{SIREV_Du_2021}
Du, Q., Ju, L., Li, X., Qiao, Z.: Maximum bound principles for a class of
  semilinear parabolic equations and exponential time-differencing schemes.
\newblock SIAM Rev. \textbf{63}, 317--359 (2021)

\bibitem{MRS_1998_Eyre}
Eyre, D.: Unconditionally gradient stable time marching the {C}ahn--{H}illiard
  equation.
\newblock Mater. Res. Soc. Sympos. Proc. \textbf{529}, 39--46 (1998)

\bibitem{SINUM_1974_Gear}
Gear, C., Tu, K.: The effect of variable mesh size on the stability of
  multistep methods.
\newblock SIAM J. Numer. Anal. \textbf{11}, 1025--1043 (1974)

\bibitem{MOC_2023_Ju}
Hou, D., Ju, L., Qiao, Z.: A linear second-order maximum bound
  principle-preserving {BDF} scheme for the {A}llen--{C}ahn equation with a
  general mobility.
\newblock Math. Comput. \textbf{92}, 2515--2542 (2023)

\bibitem{JSC_Qiao_2023}
Hou, D., Qiao, Z.: An implicit-explicit second-order {BDF} numerical scheme
  with variable steps for gradient flows.
\newblock J. Sci. Comput. \textbf{94}, 39 (2023)

\bibitem{JCAM_2024_Zhao}
Hu, B., Zhang, W., Zhao, X.: Convergence analysis of the maximum principle
  preserving {BDF}2 scheme with variable time-steps for the space fractional
  {A}llen--{C}ahn equation.
\newblock J. Comput. Appl. Math. \textbf{448}, 115951 (2024)

\bibitem{CMAME_Huang_2022}
Huang, F., Shen, J.: A new class of implicit-explicit {BDFk SAV} schemes for
  general dissipative systems and their error analysis.
\newblock Comput. Methods Appl. Mech. Engrg. \textbf{392}, 114718 (2022)

\bibitem{SISC_Huang_Shen_2020}
Huang, F., Shen, J., Yang, Z.: A highly efficient and accurate new scalar
  auxiliary variable approach for gradient flows.
\newblock SIAM J. Sci. Comput. \textbf{42}, A2514--A2536 (2020)

\bibitem{SINUM_Ju_2022}
Ju, L., Li, X., Qiao, Z.: Generalized {SAV}-exponential integrator schemes for
  {A}llen--{C}ahn type gradient flows.
\newblock SIAM J. Numer. Anal. \textbf{60}, 1905--1931 (2022)

\bibitem{JSC_Ju_2022}
Ju, L., Li, X., Qiao, Z.: Stabilized exponential-{SAV} schemes preserving
  energy dissipation law and maximum bound principle for the {A}llen--{C}ahn
  type equations.
\newblock J. Sci. Comput. \textbf{92}, 66 (2022)

\bibitem{MOC_2018_Ju}
Ju, L., Li, X., Qiao, Z., Zhang, H.: Energy stability and error estimates of
  exponential time differencing schemes for the epitaxial growth model without
  slope selection.
\newblock Math. Comput. \textbf{87}, 1859--1885 (2018)

\bibitem{BIT_2022_Kemmochi}
Kemmochi, T., Sato, S.: Scalar auxiliary variable approach for
  conservative/dissipative partial differential equations with unbounded energy
  functionals.
\newblock BIT Numer. Math. \textbf{62}, 903--930 (2022)

\bibitem{SISC_2020_LiBuyang}
Li, B., Yang, J., Zhou, Z.: Arbitrarily high-order exponential cut-off methods
  for preserving maximum principle of parabolic equations.
\newblock SIAM J. Sci. Comput. \textbf{42}, A3957--A3978 (2020)

\bibitem{MCS_2023_Li}
Li, D., Li, X., Mei, M., Yuan, W.: A structure-preserving and variable-step
  {BDF}2 {F}ourier pseudo-spectral method for the two-mode phase field crystal
  model.
\newblock Math. Comput. Simulat. \textbf{205}, 483--506 (2023)

\bibitem{IMA_2022_Liao}
Liao, H., Ji, B., Zhang, L.: An adaptive {BDF}2 implicit time-stepping method
  for the phase field crystal model.
\newblock IMA J. Numer. Anal. \textbf{42}, 649--679 (2022)

\bibitem{SINUM_2020_Liao}
Liao, H., Tang, T., Zhou, T.: On energy stable, maximum-principle preserving,
  second-order {BDF} scheme with variable steps for the {A}llen--{C}ahn
  equation.
\newblock SIAM J. Numer. Anal. \textbf{58}, 2294--2314 (2020)

\bibitem{MOC_2021_Liao}
Liao, H., Zhang, Z.: Analysis of adaptive {BDF}2 scheme for diffusion
  equations.
\newblock Math. Comput. \textbf{90}, 1207--1226 (2021)

\bibitem{JSC_2023_Chen}
Liu, Q., Jing, J., Yuan, M., Chen, W.: A positivity-preserving, energy stable
  {BDF}2 scheme with variable steps for the {C}ahn--{H}illiard equation with
  logarithmic potential.
\newblock J. Sci. Comput. \textbf{95}, 37 (2023)

\bibitem{SISC_2020_Liu}
Liu, Z., Li, X.: The exponential scalar auxiliary variable ({E-SAV}) approach
  for phase field models and its explicit computing.
\newblock SIAM J. Sci. Comput. \textbf{42}, B630--B655 (2020)

\bibitem{SISC_2016_Xu}
Lv, C., Xu, C.: Error analysis of a high order method for time-fractional
  diffusion equations.
\newblock SIAM J. Sci. Comput. \textbf{38}, A2699--A2724 (2016)

\bibitem{SISC_2011_Qiao}
Qiao, Z., Zhang, Z., Tang, T.: An adaptive time-stepping strategy for the
  molecular beam epitaxy models.
\newblock SIAM J. Sci. Comput. \textbf{33}, 1395--1414 (2011)

\bibitem{IMA_1992_Rubinstein}
Rubinstein, J., Sternberg, P.: Nonlocal reaction-diffusion equations and
  nucleation.
\newblock IMA J. Appl. Math. \textbf{48}, 249--264 (1992)

\bibitem{SISC_1997_Shampine}
Shampine, L., Reichelt, M.: The {MATLAB ODE} suite.
\newblock SIAM J. Sci. Comput. \textbf{18}, 1--22 (1997)

\bibitem{CMS_Shen_2016}
Shen, J., Tang, T., Yang, J.: On the maximum principle preserving schemes for
  the generalized {A}llen--{C}ahn equation.
\newblock Commun. Math. Sci. \textbf{14}, 1517--1534 (2016)

\bibitem{SINUM_2018_Shen}
Shen, J., Xu, J.: Convergence and error analysis for the scalar auxiliary
  variable ({SAV}) schemes to gradient flows.
\newblock SIAM J. Numer. Anal. \textbf{56}, 2895--2912 (2018)

\bibitem{JCP_2018_Shen}
Shen, J., Xu, J., Yang, J.: The scalar auxiliary variable ({SAV}) approach for
  gradient flows.
\newblock J. Comput. Phys. \textbf{353}, 407--416 (2018)

\bibitem{SIREV_2019_Shen}
Shen, J., Xu, J., Yang, J.: A new class of efficient and robust energy stable
  schemes for gradient flows.
\newblock SIAM Rev. \textbf{61}, 474--506 (2019)

\bibitem{DCDS_2010_Shen}
Shen, J., Yang, X.: Numerical approximations of {A}llen--{C}ahn and
  {C}ahn--{H}illiard equations.
\newblock Discrete Contin. Dyn. Syst. \textbf{28}, 1669--1691 (2010)

\bibitem{JCM_Tang_2016}
Tang, T., Yang, J.: Implicit-explicit scheme for the {A}llen--{C}ahn equation
  preserves the maximum principle.
\newblock J. Comput. Math. \textbf{34}, 471--481 (2016)

\bibitem{SINUM_2006_Xu}
Xu, C., Tang, T.: Stability analysis of large time-stepping methods for
  epitaxial growth models.
\newblock SIAM J. Numer. Anal. \textbf{44}, 1759--1779 (2006)

\bibitem{JSC_2022_Yang}
Yang, J., Yuan, Z., Zhou, Z.: Arbitrarily high-order maximum bound preserving
  schemes with cut-off postprocessing for {A}llen--{C}ahn equations.
\newblock J. Sci. Comput. \textbf{90}, 76 (2022)

\bibitem{JCP_2016_Yang}
Yang, X.: Linear, first and second-order, unconditionally energy stable
  numerical schemes for the phase field model of homopolymer blends.
\newblock J. Comput. Phys. \textbf{327}, 294--316 (2016)

\bibitem{ESAIM_2024_Fu}
Zhang, B., Fu, H.: An efficient two-grid high-order compact difference scheme
  with variable-step {BDF}2 method for the semilinear parabolic equation.
\newblock ESAIM: Math. Model. Numer. Anal. \textbf{58}, 421--455 (2024)

\bibitem{ANM_2025_Zhang}
Zhang, B., Zhou, C., Fu, H.: A novel efficient generalized energy-optimized
  exponential {SAV} scheme with variable-step {BDF}k method for gradient flows.
\newblock Appl. Numer. Math. \textbf{210}, 39--63 (2025)

\end{thebibliography}

\end{document}